\documentclass[a4paper,11pt]{amsart}

\usepackage[utf8]{inputenc}
\usepackage[T1]{fontenc}
\usepackage{amssymb,mathenv,amsmath,amsfonts,amsthm}
\usepackage{pgf,tikz}
\usetikzlibrary{arrows}
\usepackage{graphicx}
\usepackage[foot]{amsaddr}
\usepackage{hyperref}

\setcounter{secnumdepth}{4}

\usepackage{enumerate}
\usepackage[left=3.45cm,right=3.45cm,top=3cm,bottom=3cm]{geometry}
\usepackage{tikz-cd}
\newtheorem{theorem}{Theorem}[section]
\newtheorem{corollary}[theorem]{Corollary}
\newtheorem{lemma}[theorem]{Lemma}

\newtheorem{prop-def}[theorem]{Proposition-Definition}
\newtheorem{prop-defs}[theorem]{Proposition-Definitions}

\newtheorem{proposition}[theorem]{Proposition}

\newtheorem{assumption}[theorem]{Assumption}

\theoremstyle{prooflemma}

\theoremstyle{proofproposition}

\theoremstyle{definition}
\newtheorem{definition}[theorem]{Definition}

\theoremstyle{definitions}

\theoremstyle{remark}
\newtheorem{remark}[theorem]{Remark}

\theoremstyle{condition}
\newtheorem{condition}[theorem]{Condition}

\begin{document}

\title{Local-global compatibility of mod $p$ Langlands program for certain Shimura varieties}

\author{Kegang Liu}

\address{Institut Galilée, Universit\'e Sorbonne
Paris Nord, 99 avenue J.B. Clément, 93430, Villetaneuse, France}
\email{kegang.liu@math.univ-paris13.fr}

\begin{abstract}
We generalize the local-global compatibility result in \cite{Sch18} to higher dimensional cases, by examining the relation between Scholze's functor and cohomology of Kottwitz-Harris-Taylor type Shimura varieties. Along the way we prove a cuspidality criterion from type theory. We also deal with compatibility for torsion classes in the case of semisimple mod $p$ Galois representations with distinct irreducible components under certain flatness hypotheses.

\end{abstract}

\maketitle

\tableofcontents

\newpage

\section{Introduction}
The existence of a $p$-adic local Langlands correspondence, as was first envisioned by Breuil (cf. \cite{Bre10}), is still widely open beyond the case of $\mathrm{GL}_2(\mathbb{Q}_p)$. The work of Caraiani-Emerton-Gee-Geraghty-Paskunas-Shin \cite{CEG$^+$18} provides a construction which associates $p$-adic $\mathrm{GL}_n(L)$-representations with  Galois representations for a $p$-adic field $L$. Their construction is global in nature. Later in \cite{Sch18}, Scholze takes the other direction and constructs Galois representations from mod $p$ (and $p$-adic) representations of $\mathrm{GL}_n(L)$, in a purely local way. Then Scholze proves the compatibility between his construction and the patching construction of Caraiani-Emerton-Gee-Geraghty-Paskunas-Shin, as well as a local-global compatibility result, both for $\mathrm{GL}_2$.

The purpose of this article is to generalize the local-global compatibility result of Scholze to $\mathrm{GL}_n$ for $n>2$. For this we follow mostly the strategy of \cite{Sch18}. Let us describe the results of \cite{Sch18} and this paper in more detail. 

Let $n \geq 1$ be an integer and $L/\mathbb{Q}_p$ be a finite extension with residue field $k$ of cardinality $q$. Denote by $\breve{L}$ the completion of the maximal unramified extension of $L$. Then one has the Lubin-Tate tower $(\mathcal{M}_{\mathrm{LT},J})_{J\subseteq \mathrm{GL}_n(L)}$, indexed by compact open subgroups $J$ of $\mathrm{GL}_n(L)$, consisting of smooth rigid-analytic varieties $\mathcal{M}_{\mathrm{LT},J}$ over $\breve{L}$ equipped with compatible actions of $D^\times$ on all $\mathcal{M}_{\mathrm{LT},J}$ where $D$ is the central division algebra over $L$ of invariant $1/n$. Let $\pi$ be an admissible smooth $\mathbb{F}_p$-representation of $\mathrm{GL}_n(L)$. The construction of Scholze involves descending the trivial sheaf $\pi$ on $\mathcal{M}_{\mathrm{LT},\infty}$ along the Gross-Hopkins period map
$$\pi_{\mathrm{GH}}: \mathcal{M}_{\mathrm{LT},\infty} \to \mathbb{P}^{n-1}_{\breve{L}},$$
resulting in a Weil-equivariant sheaf $\mathcal{F}_\pi$ on the site $(\mathbb{P}_{\breve{L}}^{n-1}/D^\times)_{\text{\'et}}$. (One may refer to Section 3 of \cite{Sch18} for the notations and more details.) The main theorem of \cite{Sch18}, Theorem 1.1 in \textit{loc.cit.}, asserts that for each $i \geq 1$, the cohomology group $H^i_{\text{\'et}}(\mathbb{P}_{\mathbb{C}_p}^{n-1}, \mathcal{F}_\pi)$ is an admissible representation of $D^\times$ and carries an action of the Galois group $\mathrm{Gal}(\bar{L}/L)$; moreover, this cohomology group vanishes if $i > 2(n-1)$. 

To state our local-global compatibility result, we need to change to a global context. So fix a CM field extension $K/F$ with $K$ totally imaginary and $F$ its maximal totally real subfield. Choose a place $\mathfrak{p}$ of $F$ lying over $p$ and an infinite place $\alpha$ of $K$. Let $B$ be a division algebra over $K$ of dimension $n^2$ with an involution of the second kind which is supposed to be positive. Assume that $\mathfrak{p}$ is split in $K$ and fix a place $\mathfrak{q}$ of $K$ over $\mathfrak{p}$ where we assume moreover that $B$ is a division algebra of invariant $1/n$. Then to these data one has a corresponding unitary similitude group $\tilde{G}$ over $F$ with $$\tilde{G}(F_\mathfrak{p})\cong (B_\mathfrak{q}^{op})^\times \times (F_\mathfrak{p})^\times,$$ and for each compact open subgroup $U \subseteq (B_\mathfrak{q}^{op})^\times$ there is a Shimura variety $\mathrm{Sh}_{UC^\mathfrak{p}}$ over $K$ associated with the subgroup $U\times \mathcal{O}_{F_\mathfrak{p}}^\times \times C^\mathfrak{p} $ of $ \tilde{G}(\mathbb{A}_{F,f}) $, where $C^\mathfrak{p} \subseteq \tilde{G}(\mathbb{A}_{F,f}^\mathfrak{p})$ is a fixed sufficiently small tame level. Moreover there exists another division algebra $D$ over $K$ with interchanged local behaviour at $\mathfrak{q}$ and $\alpha$ from $B$, so that its associated unitary similitude group $G'$ over $F$ is an inner form of $\tilde{G}$, locally isomorphic to $\tilde{G}$ at all places except $\mathfrak{p}$ and $\alpha|_F$; in particular, $D$ is split at $\mathfrak{q}$. The space of continuous functions 
$$\pi=\pi_{C^\mathfrak{p}}:=C^0(G'(F)\backslash G'(\mathbb{A}_{F,f})/(\mathcal{O}^\times_{F_\mathfrak{p}}  \times C^{\mathfrak{p}} ), \mathbb{Q}_p/\mathbb{Z}_p)$$
is an admissible $\mathbb{Z}_p$-representation of $\mathrm{GL}_n(F_\mathfrak{p})$ and applying Scholze's functor, one obtains a $(\mathrm{Gal}_{F_{\mathfrak{p}}}\times B_\mathfrak{q}^\times)$-representation on the space $H^{n-1}_{\text{ét}}(\mathbb{P}_{\mathbb{C}_p}^{n-1}, \mathcal{F}_{\pi})$. On the global side, we consider the cohomology of the above system of Shimura varieties and define
$$\rho=\rho_{C^{\mathfrak{p}}}:=\underset{U}{\underrightarrow{\text{lim}}} \, H^{n-1}(\mathrm{Sh}_{UC^{\mathfrak{p}},\overline{K}}, \mathbb{Q}_p/\mathbb{Z}_p)$$
which is a $(\mathrm{Gal}_{K}\times B_\mathfrak{q}^\times)$-representation. The first result that we will prove in this article is the following weak form of local-global compatibility. 
\begin{theorem}
There is a natural isomorphism of $(\mathrm{Gal}_{F_{\mathfrak{p}}}\times B_\mathfrak{q}^\times)$-representations over $\mathbb{Z}_{p}$
$$H^i_{\text{ét}}(\mathbb{P}_{\mathbb{C}_p}^{n-1}, \mathcal{F}_{\pi_{C^{\mathfrak{p}}}}) \cong \rho_{C^{\mathfrak{p}}}.$$
\end{theorem}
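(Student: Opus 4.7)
My plan is to compare the two sides via the Hodge--Tate period map on the Shimura variety at infinite level at $\mathfrak{p}$, following Scholze's $\mathrm{GL}_2$ strategy and using Caraiani--Scholze's description of KHT-type Shimura varieties at infinite level. First I would form the perfectoid inverse limit $\mathrm{Sh}_{\infty C^\mathfrak{p}}^{\mathrm{ad}} = \varprojlim_U \mathrm{Sh}_{UC^\mathfrak{p}}^{\mathrm{ad}}$ over the compact opens $U \subseteq (B_\mathfrak{q}^{op})^\times$, and consider its Hodge--Tate period map $\pi_{\mathrm{HT}} \colon \mathrm{Sh}_{\infty C^\mathfrak{p}}^{\mathrm{ad}} \to \mathcal{F}\ell$. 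Because the KHT Shimura datum has signature $(1,n-1)$ at the distinguished place $\alpha$ and $(0,n)$ elsewhere, the Hodge cocharacter at $\mathfrak{p}$ is minuscule of type $(1,0,\ldots,0)$, so $\mathcal{F}\ell$ identifies $(\mathrm{Gal}_{F_\mathfrak{p}}\times B_\mathfrak{q}^\times)$-equivariantly with $\mathbb{P}^{n-1}_{\mathbb{C}_p}$, the same target as the Gross--Hopkins period map defining $\mathcal{F}_\pi$.

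The central step will be the identification $R\pi_{\mathrm{HT},*}(\mathbb{Q}_p/\mathbb{Z}_p) \cong \mathcal{F}_{\pi_{C^\mathfrak{p}}}$ as Weil--$B_\mathfrak{q}^\times$-equivariant sheaves on $\mathbb{P}^{n-1}_{\mathbb{C}_p}$. For this I would use the Rapoport--Zink uniformization of the basic (supersingular) Newton stratum,
$$G'(F) \backslash \bigl(\mathcal{M}_{\mathrm{LT},\infty} \times G'(\mathbb{A}_{F,f}^\mathfrak{p})/C^\mathfrak{p}\bigr),$$
which is precisely the double coset appearing in the definition of $\pi_{C^\mathfrak{p}}$, with the Lubin--Tate tower factor accounting for the infinite level at $\mathfrak{p}$. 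Since the restriction of $\pi_{\mathrm{HT}}$ to the basic locus factors through $\pi_{\mathrm{GH}}$ (both are Hodge--Tate-type period maps to the same flag variety), this directly produces $\mathcal{F}_\pi$ as the pushforward of the trivial sheaf over the image of the basic stratum. To extend the identification to the full $\mathbb{P}^{n-1}$ I would control the contribution of the non-basic Newton strata via the Caraiani--Scholze description of the fibres of $\pi_{\mathrm{HT}}$ in terms of Igusa varieties, combined with mod-$p$ vanishing (or Hodge--Newton-type concentration) for the cohomology of non-basic Igusa varieties.

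To finish I would apply the Leray spectral sequence
$$E_2^{a,b} = H^a_{\text{ét}}(\mathbb{P}^{n-1}_{\mathbb{C}_p}, R^b\pi_{\mathrm{HT},*}(\mathbb{Q}_p/\mathbb{Z}_p)) \Rightarrow H^{a+b}_{\text{ét}}(\mathrm{Sh}_{\infty C^\mathfrak{p}, \overline{K}}, \mathbb{Q}_p/\mathbb{Z}_p),$$
and use continuity of étale cohomology along the perfectoid tower to rewrite the abutment as $\varinjlim_U H^{a+b}_{\text{ét}}(\mathrm{Sh}_{UC^\mathfrak{p}, \overline{K}}, \mathbb{Q}_p/\mathbb{Z}_p)$, yielding $\rho_{C^\mathfrak{p}}$ in degree $n-1$. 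The equivariant structures match because $B_\mathfrak{q}^\times \cong D^\times$ at $\mathfrak{q}$ via the invariant-$1/n$ identification and acts by changes of level on both sides, while the $\mathrm{Gal}_{F_\mathfrak{p}}$-action is inherited from $\mathrm{Gal}_K$ through the chosen place $\mathfrak{q}$. I expect the hard part to be the second step: for $n=2$ one has only the ordinary locus beyond the supersingular one, whereas for general $n$ the Newton stratification has many pieces that must be handled simultaneously, and the argument will depend critically on the Caraiani--Scholze perfectoidness of the fibres together with precise vanishing or concentration results for non-basic Igusa cohomology.
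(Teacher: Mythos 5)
Your overall architecture --- a period map at infinite level, identification of the derived pushforward of the constant sheaf with $\mathcal{F}_{\pi_{C^\mathfrak{p}}}$, then continuity of \'etale cohomology along the tower --- is exactly the paper's, and the first and last steps are fine. The gap is in your central step, which rests on a misreading of the geometry at $\mathfrak{p}$. In this setup $B_\mathfrak{q}$ is a \emph{division} algebra of invariant $1/n$, so $\tilde{G}(F_\mathfrak{p})\cong (B_\mathfrak{q}^{op})^\times\times F_\mathfrak{p}^\times$ is compact modulo centre at $\mathfrak{p}$ and the Shimura variety is \emph{globally} $p$-adically uniformized in the Cherednik--Drinfeld style (here via Boutot--Zink/Varshavsky):
$$(\mathrm{Sh}_{UC^\mathfrak{p}}\otimes_E\mathbb{C}_p)^{ad}\cong G'(F)\backslash \mathcal{M}_{\mathrm{Dr},U,\mathbb{C}_p}\times(F_\mathfrak{p}^\times/\mathcal{O}_{F_\mathfrak{p}}^\times)\times G'(\mathbb{A}_{F,f}^\mathfrak{p})/C^\mathfrak{p}$$
for every $U\subseteq (B_\mathfrak{q}^{op})^\times$, with the \emph{Drinfeld} tower rather than the Lubin--Tate tower, and with no complementary Newton strata whatsoever. (At infinite level $\mathcal{M}_{\mathrm{Dr},\infty}\cong\mathcal{M}_{\mathrm{LT},\infty}$ by Scholze--Weinstein duality, which is why your formula for the ``basic stratum'' accidentally names the whole space; but the relevant map to $\mathbb{P}^{n-1}$ is the Hodge--Tate period map of the Drinfeld tower, i.e.\ the Gross--Hopkins map of the dual Lubin--Tate tower, a pro-\'etale $\mathrm{GL}_n(F_\mathfrak{p})$-torsor onto \emph{all} of $\mathbb{P}^{n-1}$.) Hence every geometric fibre of $\pi_{\mathrm{HT}}^{\mathrm{Sh}}$ is the profinite set $G'(F)\backslash\mathrm{GL}_n(F_\mathfrak{p})\times G'(\mathbb{A}_{F,f}^\mathfrak{p})/C^\mathfrak{p}$, the higher direct images vanish, and $R^0\pi_{\mathrm{HT}*}^{\mathrm{Sh}}(\mathbb{Q}_p/\mathbb{Z}_p)$ is $\mathcal{F}_{\pi_{C^\mathfrak{p}}}$ on the nose.

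The part of your plan that would actually fail is the proposal to ``control the contribution of the non-basic Newton strata via Igusa varieties combined with mod-$p$ vanishing.'' This is not merely unnecessary here --- it is unavailable: Caraiani--Scholze-type concentration and vanishing for non-basic Igusa varieties hold only after localizing at a suitable generic (non-Eisenstein) maximal ideal of the Hecke algebra, whereas the theorem is an isomorphism of the full, unlocalized $\mathbb{Q}_p/\mathbb{Z}_p$-cohomology. If the Shimura variety genuinely had non-basic strata (i.e.\ if $B$ were split at $\mathfrak{q}$, the honest Harris--Taylor situation), those strata would contribute nontrivially and the unlocalized statement would not hold. The fix is to replace your second step by the global uniformization theorem; once that is in place the Leray spectral sequence degenerates for trivial reasons and the remainder of your argument goes through as written.
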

As in \cite{Sch18}, this will be proved as a consequence of $p$-adic uniformization of Shimura varieties. We can deduce from this theorem a more precise result using the formalism of $\sigma$-typicity in Section 5 of \cite{Sch18}. To state it, let $\mathbb{T}$ be the formal Hecke algebra over $\mathbb{Z}$ generated by Hecke operators at good places of $K$ and $\mathfrak{m}$ be a maximal ideal of $\mathbb{T}$ (associated with a mod $p$ Galois representation $\bar{\sigma}$) such that 
$$H^{i}(\mathrm{Sh}_{UC^\mathfrak{p},\mathbb{C}},\mathbb{Z}_p)_\mathfrak{m} \neq 0$$
only when $i = n-1$; we also assume that $\mathfrak{m}$ satisfies certain ``strongly irreducibile'' condition (Assumption \ref{tateconj}), to apply known cases of the Tate conjecture (otherwise we need to pass to Galois semisimplifications of relevant cohomology groups) . Then there is an $n$-dimensional Galois representation 
$$\sigma=\sigma_{\mathfrak{m}}: G_{K} \to \mathrm{GL}_{n}(\mathbb{T}(C^\mathfrak{p})_{\mathfrak{m}})$$
characterized by certain Eichler-Shimura
relations. Here $\mathbb{T}(C^\mathfrak{p})_{\mathfrak{m}}$ denotes the completed Hecke algebra at level $C^\mathfrak{p}$ and is a complete local Noetherian ring acting faithfully on $\pi_\mathfrak{m}$. The next result says that one can recover $\sigma|_{\mathrm{Gal}_{F_{\mathfrak{p}}}}$ from $\pi_\mathfrak{m}$.
\begin{theorem}
There is a canonical $\mathbb{T}(C^{\mathfrak{p}})_{\mathfrak{m}}[\mathrm{Gal}_{F_{\mathfrak{p}}}\times B_\mathfrak{q}^\times]$-equivariant isomorphism
$$H^{n-1}_{\text{ét}}(\mathbb{P}_{\mathbb{C}_p}^{n-1}, \mathcal{F}_{\pi_{C^{\mathfrak{p}},\mathfrak{m}}}) \cong \sigma|_{\mathrm{Gal}_{F_{\mathfrak{p}}}}\otimes_{\mathbb{T}(C^{\mathfrak{p}})_{\mathfrak{m}}}\rho[\sigma].$$
for some faithful $\mathbb{T}(C^{\mathfrak{p}})$-module $\rho[\sigma]$ which carries the trivial $\mathrm{Gal}_{F_{\mathfrak{p}}}$-action. If moreover $\bar{\sigma}|_{\mathrm{Gal}_{F_{\mathfrak{p}}}}$ is absolutely irreducible, then this determines $\sigma|_{\mathrm{Gal}_{F_{\mathfrak{p}}}}$ uniquely.
\end{theorem}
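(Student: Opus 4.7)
The plan is to combine Theorem 1.1 with the $\sigma$-typicity formalism from Section 5 of \cite{Sch18}. Since Scholze's functor $\pi\mapsto\mathcal{F}_\pi$ is exact and Hecke-equivariant, localization at $\mathfrak{m}$ commutes with the functor $H^{n-1}_{\text{ét}}(\mathbb{P}^{n-1}_{\mathbb{C}_p},\mathcal{F}_{-})$, and Theorem 1.1 yields a $\mathbb{T}(C^{\mathfrak{p}})_{\mathfrak{m}}[\mathrm{Gal}_K\times B_{\mathfrak{q}}^\times]$-equivariant isomorphism
$$H^{n-1}_{\text{ét}}(\mathbb{P}^{n-1}_{\mathbb{C}_p},\mathcal{F}_{\pi_{C^{\mathfrak{p}},\mathfrak{m}}})\cong \rho_{C^{\mathfrak{p}},\mathfrak{m}}.$$
The vanishing hypothesis on $\mathfrak{m}$ guarantees that nothing is lost by restricting to degree $n-1$, so the problem reduces to analyzing the global module $\rho_{C^{\mathfrak{p}},\mathfrak{m}}$ as a $\mathbb{T}(C^{\mathfrak{p}})_{\mathfrak{m}}[\mathrm{Gal}_K]$-module.

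The next step is to transport Scholze's $\sigma$-typicity argument from $\mathrm{GL}_2$ to $\mathrm{GL}_n$. The Eichler--Shimura relations characterizing $\sigma$ together with Chebotarev density determine the Frobenius traces of $\mathrm{Gal}_K$ on $\rho_{C^{\mathfrak{p}},\mathfrak{m}}$ in terms of $\sigma$. Under Assumption \ref{tateconj}, the relevant cases of the Tate conjecture upgrade this identity of traces to a canonical tensor decomposition
$$\rho_{C^{\mathfrak{p}},\mathfrak{m}}\cong \sigma\otimes_{\mathbb{T}(C^{\mathfrak{p}})_{\mathfrak{m}}}\rho[\sigma],$$
where $\rho[\sigma]:=\mathrm{Hom}_{\mathbb{T}(C^{\mathfrak{p}})_{\mathfrak{m}}[\mathrm{Gal}_K]}(\sigma,\rho_{C^{\mathfrak{p}},\mathfrak{m}})$ carries only the $B_{\mathfrak{q}}^\times$-action and trivial Galois action. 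Restricting Galois to $\mathrm{Gal}_{F_{\mathfrak{p}}}$ and combining with the first paragraph yields the claimed isomorphism. Faithfulness of $\rho[\sigma]$ over $\mathbb{T}(C^{\mathfrak{p}})$ follows from faithfulness of the Hecke action on $\pi_{C^{\mathfrak{p}},\mathfrak{m}}$ (hence on its image under Scholze's functor, by Theorem 1.1) combined with the fact that $\sigma$ has full rank $n$ over $\mathbb{T}(C^{\mathfrak{p}})_{\mathfrak{m}}$, so that any annihilator of $\rho[\sigma]$ would annihilate the tensor product.

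For the uniqueness assertion, when $\bar\sigma|_{\mathrm{Gal}_{F_{\mathfrak{p}}}}$ is absolutely irreducible, I would invoke the standard Schur--Nakayama principle: any tensor factorization $V\cong \tau\otimes_T W$ over a complete local Noetherian ring $T$ with $W$ a faithful $T$-module carrying trivial Galois action, and with $\tau$ a deformation of an absolutely irreducible residual representation, determines $\tau$ up to canonical isomorphism. Reducing mod $\mathfrak{m}_T$ pins down $\bar\tau$ inside the isotypic decomposition of $\bar V$, and the rigidity of residually absolutely irreducible deformations over a complete local Noetherian base then propagates the uniqueness to all of $T$.

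The main obstacle is the tensor decomposition step: showing that the entire $\mathrm{Gal}_K$-action on $\rho_{C^{\mathfrak{p}},\mathfrak{m}}$ factors through the representation $\sigma$ requires more than the Eichler--Shimura characterization of traces and is precisely where Assumption \ref{tateconj} is needed, to convert the identity of characteristic polynomials of Frobenius into a genuine module-level decomposition via the Tate conjecture. All subsequent steps are formal consequences of Theorem 1.1, commutative algebra, and the rigidity of absolutely irreducible deformations.
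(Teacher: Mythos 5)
Your overall route --- localize the weak compatibility isomorphism of Theorem \ref{cmp} at $\mathfrak{m}$, identify the global side as a $\sigma$-typic $\mathbb{T}(C^{\mathfrak{p}})_{\mathfrak{m}}[\mathrm{Gal}_K]$-module, and deduce uniqueness from the rigidity lemma for deformations of residually absolutely irreducible representations (Lemma 5.5 of \cite{Sch18}) --- is the same as the paper's. But there is one genuine gap: you never address how $\pi_{C^{\mathfrak{p}},\mathfrak{m}}$ acquires a $\mathbb{T}(C^{\mathfrak{p}})_{\mathfrak{m}}$-module structure in the first place. A priori $\pi_{C^{\mathfrak{p}}}$ carries only an action of the abstract Hecke algebra $\mathbb{T}$, while $\mathbb{T}(C^{\mathfrak{p}})_{\mathfrak{m}}$ is defined through its faithful action on the cohomology of the Shimura varieties for $\tilde{G}$, i.e.\ on the $B_{\mathfrak{q}}^\times$-side, not on the space of functions on $G'(F)\backslash G'(\mathbb{A}_{F,f})$. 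Without extending the $\mathbb{T}$-action on $\pi_{\mathfrak{m}}$ to a continuous $\mathbb{T}(C^{\mathfrak{p}})_{\mathfrak{m}}$-action, the left-hand side of the asserted isomorphism is not a $\mathbb{T}(C^{\mathfrak{p}})_{\mathfrak{m}}$-module, the claimed equivariance is not meaningful, and the point that $\pi_{\mathfrak{m}}$ \emph{determines} $\sigma|_{\mathrm{Gal}_{F_{\mathfrak{p}}}}$ loses its content. This extension is exactly Corollary \ref{ext}, and it is not formal: it rests on the cuspidality criterion of Proposition \ref{cuspidal} (any automorphic representation of $G'$ contributing to the space of functions with the nondegenerate character $\psi_m$ at $\mathfrak{p}$ is cuspidal at $\mathfrak{p}$), which produces congruences allowing one to transfer such representations by Jacquet--Langlands to $\tilde{G}$, where the completed Hecke algebra does act. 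This occupies all of Section \ref{digression} and is flagged in the introduction as one of the main steps toward this theorem.

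Two smaller points. First, your description of the tensor decomposition step is vaguer than the actual argument: one does not ``upgrade an identity of Frobenius traces'' directly. The paper reduces $\sigma$-typicity of the integral module to that of $H^{n-1}(\mathrm{Sh}_{UC^{\mathfrak{p}},\overline{K}},\mathbb{Z}_p)_{\mathfrak{m}}\otimes_{\mathbb{Z}_p}\overline{\mathbb{Q}}_p$ via Proposition 5.4 of \cite{Sch18}, and there uses Matsushima's formula together with the semisimplicity of the $R^{n-1}(\pi)$ --- this is precisely where Assumption \ref{tateconj} and the results of \cite{FN19} enter --- and the fact that each $R^{n-1}(\pi)$ is a direct sum of copies of a single $n$-dimensional representation. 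Second, the typicity is proved for the completed cohomology $\tilde{H}^{n-1}(C^{\mathfrak{p}},\mathbb{Z}_p)_{\mathfrak{m}}$, and its transfer to $\rho^{n-1}_{C^{\mathfrak{p}},\mathfrak{m}}$ uses the concentration-in-middle-degree hypothesis on $\mathfrak{m}$; this deserves at least a sentence. Your faithfulness and uniqueness arguments are essentially those of the paper and are fine.
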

One important step in proving this theorem is the following cuspidality criterion, and its consequence on constructing congurences between automorphic forms, which will allow us to extend the Hecke action of $\mathbb{T}$ on $\pi_\mathfrak{m}$ to an action of 
$\mathbb{T}(C^\mathfrak{p})_{\mathfrak{m}}$; see section \ref{digression} for the notations and some background unexplained here. We remark that Fintzen-Shin \cite{FS20} have proved, independently and simultaneously, such results for all reductive groups over totally real fields that are compact modulo center at infinity under a mild condition on $p$; see Theorem 3.1.1 of their paper (and also the Appendix D therein which removes the condition on $p$).
\begin{proposition}
Let $L$ be a $p$-adic field and $\psi: L \to \mathbb{C}^\times$ be a non-trivial (additive) character of level one (that is, $\psi$ is trivial on $\varpi\mathcal{O}_L$ but non-trivial on $\mathcal{O}_L$). Let $\alpha_m$ be the homomorphism $$\alpha_m: U^{\tilde{M}}(\mathfrak{A})\to \varpi^{-N}\mathcal{O}_L,\quad a\mapsto tr_{A/L}(\beta_m(a-1)).$$
If $\pi$ is a smooth irreducible representation of $\mathrm{GL}_n(L)$ such that $\pi|_{U^{\tilde{M}}(\mathfrak{A})}$ contains the character $\psi \circ \alpha_m$, then $\pi$ is cuspidal.
\end{proposition}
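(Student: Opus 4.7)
The plan is a contradiction argument via Mackey theory, in the style of cuspidality tests from Bushnell--Kutzko's type theory. Suppose $\pi$ is not cuspidal; then $\pi$ embeds into a parabolically induced representation $\mathrm{Ind}_P^G \tau$ for some proper parabolic $P = MN$ of $G = \mathrm{GL}_n(L)$ and some smooth irreducible $\tau$ of $M$. Writing $H := U^{\tilde{M}}(\mathfrak{A})$ and $\theta := \psi \circ \alpha_m$, the hypothesis provides an $H$-equivariant embedding $\theta \hookrightarrow \pi|_H$, hence into $(\mathrm{Ind}_P^G\tau)|_H$. Mackey's restriction formula decomposes the latter as a direct sum of compact inductions $\mathrm{ind}_{H \cap g^{-1}Pg}^H({}^g\tau)$ indexed by $g \in P\backslash G/H$, so $\theta$ must occur in one such summand. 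Since ${}^g\tau$ is trivial on $g^{-1}Ng$, we deduce $\theta|_{H \cap g^{-1}Ng} = \mathbf{1}$ for some $g \in G$.

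The next step is to translate this triviality into a concrete condition on $\beta_m$. After adjusting $g$ so that $g^{-1}Pg$ sits in standard position with respect to the lattice chain underlying $\mathfrak{A}$, the Iwahori-type factorisation of $H$ with respect to $g^{-1}Pg$ produces a non-trivial compact open subgroup $1 + X \subseteq H \cap g^{-1}Ng$, where $X$ is an $\mathcal{O}_L$-lattice inside the Lie algebra of the conjugated unipotent radical. On such a subgroup $\alpha_m$ linearises to $\alpha_m(1 + x) = tr_{A/L}(\beta_m x)$, and combined with the level-one condition on $\psi$, the triviality of $\theta$ becomes the integrality statement $tr_{A/L}(\beta_m X) \subseteq \varpi\mathcal{O}_L$.

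The main obstacle is then to rule this out using the simplicity of the stratum underlying $\beta_m$. By definition of a simple stratum, $E := L[\beta_m]$ is a field whose embedding into $A$ is tightly controlled by $\mathfrak{A}$, and the principal symbol of $\beta_m$ in the appropriate associated graded piece of $\mathfrak{A}$ is sufficiently regular --- for instance, after normalisation, it is conjugate over the residue field to a matrix with irreducible minimal polynomial of the expected degree. Such regularity forces $\beta_m$ to pair non-trivially under the trace form with any $G$-conjugate of a non-zero unipotent Lie subalgebra meeting a filtration piece of $\mathfrak{A}$: were $tr_{A/L}(\beta_m X) \subseteq \varpi\mathcal{O}_L$ to hold for such an $X$, the centraliser of $\beta_m$ --- which contains $E$ --- would be forced into a proper parabolic subalgebra, contradicting the degree and centraliser constraints built into the stratum being simple. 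The bulk of the work lies in executing this linear-algebra step while carefully tracking the filtrations on $\mathfrak{A}$, the embedding of $E$ into $A$, and the various Mackey double cosets; once this is done, the contradiction is immediate and $\pi$ is cuspidal.
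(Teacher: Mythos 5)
Your first two steps are sound and constitute a genuinely different strategy from the paper's. The embedding of a non-cuspidal $\pi$ into $\mathrm{Ind}_P^G\tau$, the finiteness of $P\backslash G/H$, Frobenius reciprocity on each Mackey summand, and the identities $U^{\tilde M}(\mathfrak{A})\cap N^g=1+(\mathfrak{P}^{\tilde M}\cap\mathfrak{n}^g)$ and $\alpha_m(1+x)=tr_{A/L}(\beta_m x)$ correctly reduce the proposition to the assertion that $tr_{A/L}(\beta_m(\mathfrak{P}^{\tilde M}\cap\mathfrak{n}^g))\not\subseteq\varpi\mathcal{O}_L$ for every $g$ and every proper parabolic. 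The paper instead argues positively: two dimension counts (Mackey applied to $\text{c-Ind}_{U^{\tilde M}(\mathfrak{A})}^{H^1}\psi_m$ against the count of simple characters, and to $\text{c-Ind}_{J^1}^{J}\eta(\theta)$ against the count of $\beta$-extensions) show that $\pi$ contains a maximal simple type, and Theorem 6.2.2 of \cite{BK96} concludes.

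The problem is your third step, the only place where the specific shape of $\beta_m$ enters; as written it is not a proof, and the heuristic offered for it does not match the situation. You appeal to the principal symbol of $\beta_m$ being ``conjugate over the residue field to a matrix with irreducible minimal polynomial of the expected degree.'' Here $E=L[\beta_m]$ is \emph{totally ramified} of degree $n$ and $e(\mathfrak{A})=n$, so $\mathfrak{A}/\mathfrak{P}\cong k_L^{\,n}$ and the relevant normalised power $\varpi^{M}\beta_m^{\,n}$ reduces to the scalar $1$: there is no residual irreducibility, and the entire content of minimality is the coprimality of $\nu_{\mathfrak{A}}(\beta_m)=-(mn+1)$ with $e(\mathfrak{A})=n$. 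Likewise, ``the centraliser of $\beta_m$ would be forced into a proper parabolic subalgebra'' is not an argument. What the condition $tr_{A/L}(\beta_m X)\subseteq\varpi\mathcal{O}_L$ actually gives, via duality of lattices under the trace pairing (using $(\mathfrak{P}^j)^*=\mathfrak{P}^{1-j}$ and $\mathfrak{n}^\perp=\mathfrak{p}$), is the congruence $\beta_m\in g^{-1}\mathfrak{p}g+\mathfrak{P}^{1-\tilde M}$ --- a congruence only to roughly half the depth of $\beta_m$, since $\tilde M=[M/2]+1$ while $\nu_{\mathfrak{A}}(\beta_m)=-M$. Ruling out such a congruence for a minimal element generating a maximal totally ramified extension is precisely the non-trivial content of the theory of split versus non-split strata, and nothing in your sketch supplies it; note also that the naive move of raising the congruence to the $n$-th power fails because $\beta_m^{\,n}$ is central and lies in every parabolic subalgebra. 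This step must be carried out in full for the argument to stand; as it is, the proof has a genuine gap at its crux.
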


\begin{corollary}
Let $A_m=\mathbb{Z}_p[T]/((T^{p^m}-1)/(T-1))$. Take $L=F_\mathfrak{p}$ and let $\psi$ be a character of $L$ with coefficients in $A_m$ whose restriction to $\varpi^{-N}\mathcal{O}_L$ is the map
$$ \varpi^{-N}\mathcal{O}_L\stackrel{\times\varpi^N}{\xrightarrow{\sim}} \mathcal{O}_L \twoheadrightarrow \mathcal{O}_L/\varpi^{me}\twoheadrightarrow \mathbb{Z}/p^m\mathbb{Z} \to A_m^\times$$
with the last arrow mapping $1\in \mathbb{Z}/p^m\mathbb{Z}$ to $T\in A_m^\times$. Define $\psi_m=\psi\circ\alpha_m$. Then any automorphic representation $\pi$ of $G'$ appearing in 
$$C^0(G'(F)\backslash G'(\mathbb{A}_{F,f})/(U^{\tilde{M}}\times \mathcal{O}^\times_{F_\mathfrak{p}}  \times C^{\mathfrak{p}} ), \psi_m)[1/p]$$ 
is cuspidal at $\mathfrak{p}$.
\end{corollary}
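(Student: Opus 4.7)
The plan is to deduce this directly from Proposition 1.3 applied to the local component at $\mathfrak{p}$. The content of the corollary is essentially just the bookkeeping of coefficient rings: Proposition 1.3 is stated over $\mathbb{C}$, whereas the character $\psi$ here takes values in $A_m^\times$.

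First I would fix an embedding $\iota_p: \overline{\mathbb{Q}}_p \hookrightarrow \mathbb{C}$. The ring $A_m[1/p]$ is isomorphic to $\mathbb{Q}_p(\zeta_{p^m})$ via $T \mapsto \zeta_{p^m}$, a primitive $p^m$-th root of unity. Composing with $\iota_p$ turns the $A_m$-valued character $\psi$ into a complex character of $L = F_\mathfrak{p}$. The explicit chain of maps defining $\psi|_{\varpi^{-N}\mathcal{O}_L}$ shows that it is trivial on $\varpi\mathcal{O}_L$ and nontrivial on $\mathcal{O}_L$: the composition factors through $\mathcal{O}_L/\varpi^{me}$, and the map $\mathbb{Z}/p^m\mathbb{Z} \to A_m^\times$ sending $1$ to $T$ composes with $\iota_p$ to a nontrivial root of unity. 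Hence the resulting $\psi$ has level one in the sense required by Proposition 1.3.

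Next, an automorphic representation $\pi$ appearing in the given $\psi_m$-isotypic space must have its local component $\pi_\mathfrak{p}$ containing a nonzero vector on which $U^{\tilde{M}}$ acts through $\psi_m = \psi \circ \alpha_m$. Since $D$ is split at $\mathfrak{q}$, the factor of $G'(F_\mathfrak{p})$ in which $U^{\tilde{M}}$ sits is isomorphic to $\mathrm{GL}_n(F_\mathfrak{p})$; after base-changing via $\iota_p$ we may therefore view the relevant tensor factor of $\pi_\mathfrak{p}$ as a smooth irreducible complex representation of $\mathrm{GL}_n(F_\mathfrak{p})$ whose restriction to $U^{\tilde{M}}(\mathfrak{A})$ contains the character $\psi \circ \alpha_m$.

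Proposition 1.3 then applies directly and yields cuspidality of $\pi_\mathfrak{p}$. I do not expect any genuine obstacle in this derivation; cuspidality can be tested by vanishing of Jacquet modules along proper parabolics, a property stable under extension of scalars in characteristic zero, so the passage through $\iota_p$ is harmless. The substantive difficulty lies entirely in the local cuspidality criterion Proposition 1.3, which is assumed here.
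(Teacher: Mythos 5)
Your reduction to the local component at $\mathfrak{p}$, and the passage from $A_m$-coefficients to $\mathbb{C}$ via $A_m[1/p]\cong\mathbb{Q}_p(\zeta_{p^m})$ and a fixed embedding, are fine and agree with what the paper does implicitly. The gap is the assertion that ``the resulting $\psi$ has level one.'' This is not true in general, and nothing in your chain-of-maps argument establishes it: the nontriviality of $\mathbb{Z}/p^m\mathbb{Z}\to A_m^\times$ only shows that $\psi$ is nontrivial on $\varpi^{-N}\mathcal{O}_L$, whereas level one requires $\psi$ to be trivial on $\varpi\mathcal{O}_L$ \emph{and} nontrivial on $\mathcal{O}_L$. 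From the prescribed restriction one only knows that $\psi$ is trivial on $\varpi^{me-N}\mathcal{O}_L$, so its level $k$ (the smallest integer with $\psi|_{\varpi^k\mathcal{O}_L}$ trivial) merely satisfies $-N<k\leq me-N$, and $k=1$ holds only for special values of the parameters and of the chosen surjection $\mathcal{O}_L/\varpi^{me}\twoheadrightarrow\mathbb{Z}/p^m\mathbb{Z}$. Since Proposition \ref{cuspidal} is stated (and proved, via the simple stratum $[\mathfrak{A},-\nu_{\mathfrak{A}}(\beta_m),0,\beta_m]$) for level-one characters, it does not apply directly.

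The missing step, which is essentially the entire content of the paper's proof, is a rescaling that trades the level of $\psi$ against the index $m$. With $k$ as above, set $\psi'(x)=\psi(\varpi^{k-1}x)$, which is of level one. Because $\beta_{m'}=\varpi^{-m'}\alpha^{-1}$, one has $\alpha_{m+k-1}(g)=\varpi^{-(k-1)}\alpha_m(g)$, hence $\psi\circ\alpha_m=\psi'\circ\alpha_{m+k-1}$ on the relevant subgroup; so a $\psi_m$-eigenvector in $\pi_{\mathfrak{p}}$ is a $\psi'\circ\alpha_{m+k-1}$-eigenvector, and Proposition \ref{cuspidal} applies with $m$ replaced by $m+k-1$ (and the corresponding $U^{\tilde{M}_{m+k-1}}$). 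Your argument needs this substitution; as written, the direct appeal to the proposition fails at the level-one hypothesis.
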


Finally we prove a torsion class version under a reasonable flatness assumption, cf. Remark \ref{Gee-Newton} and Remark \ref{rmkflatness} for more on this assumption. It shows in particular that if $\bar{\sigma}|_{\mathrm{Gal}_{F_{\mathfrak{p}}}}$ is irreducible, then it can be read off from $H^{n-1}_{\text{ét}}(\mathbb{P}_{\mathbb{C}_p}^{n-1}, \mathcal{F}_{ \pi[\mathfrak{m}]})$ .

\begin{theorem}\label{modplgc}
Assume that $\pi_{\mathfrak{m}}^\vee$ is flat over $\mathbb{T}(C^{\mathfrak{p}})_{\mathfrak{m}}$. Then $H^{n-1}_{\text{ét}}(\mathbb{P}_{\mathbb{C}_p}^{n-1}, \mathcal{F}_{ \pi[\mathfrak{m}]})$ is a non-zero admissible $\mathrm{Gal}_{F_{\mathfrak{p}}} \times B_\mathfrak{q}^\times $-representation, and any of its indecomposable  $\mathrm{Gal}_{F_{\mathfrak{p}}}$-subrepresentations is a subrepresentation of $\bar{\sigma}|_{\mathrm{Gal}_{F_{\mathfrak{p}}}}$. In particular, if $\bar{\sigma}|_{\mathrm{Gal}_{F_{\mathfrak{p}}}}$ is irreducible, then every indecomposable subrepresentation of $H^{n-1}_{\text{ét}}(\mathbb{P}_{\mathbb{C}_p}^{n-1}, \mathcal{F}_{ \pi[\mathfrak{m}]})$ is isomorphic to $\bar{\sigma}|_{\mathrm{Gal}_{F_{\mathfrak{p}}}}$. 
\end{theorem}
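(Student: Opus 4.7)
The plan is to compute $H^{n-1}_{\text{ét}}(\mathbb{P}^{n-1}_{\mathbb{C}_p}, \mathcal{F}_{\pi[\mathfrak{m}]})$ directly by combining the preceding theorem (identifying $\mathcal{S}^{n-1}(\pi_\mathfrak{m})$ with $\sigma|_{\mathrm{Gal}_{F_\mathfrak{p}}}\otimes_T \rho[\sigma]$) with a resolution argument enabled by the flatness assumption. Writing $\mathcal{S}^i(\pi) := H^i_{\text{ét}}(\mathbb{P}^{n-1}_{\mathbb{C}_p}, \mathcal{F}_\pi)$, $T := \mathbb{T}(C^{\mathfrak{p}})_{\mathfrak{m}}$, and $k := T/\mathfrak{m}$, the goal is to prove
$$\mathcal{S}^{n-1}(\pi[\mathfrak{m}]) \;\cong\; \bar{\sigma}|_{\mathrm{Gal}_{F_\mathfrak{p}}} \otimes_{k} \rho[\sigma][\mathfrak{m}],$$
from which all three assertions follow at once.

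The first ingredient is homological: by Pontryagin duality, the assumed flatness of $\pi_\mathfrak{m}^\vee$ over $T$ is equivalent to $\mathrm{Ext}^i_T(T/\mathfrak{m}, \pi_\mathfrak{m}) = 0$ for all $i > 0$. Choose a projective resolution $P_\bullet = T^{r_\bullet} \to T/\mathfrak{m}$ by finitely generated free $T$-modules. Applying $\mathrm{Hom}_T(-, \pi_\mathfrak{m})$, the Ext-vanishing yields an exact sequence of admissible smooth $\mathbb{F}_p$-representations of $\mathrm{GL}_n(F_\mathfrak{p})$,
$$0 \to \pi[\mathfrak{m}] \to \pi_\mathfrak{m}^{r_0} \to \pi_\mathfrak{m}^{r_1} \to \pi_\mathfrak{m}^{r_2} \to \cdots,$$
which I denote by $0 \to \pi[\mathfrak{m}] \to B^\bullet$. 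Applying $\mathcal{S}^\bullet$ termwise, the running concentration hypothesis on $\mathfrak{m}$ together with the preceding theorem force $\mathcal{S}^q(\pi_\mathfrak{m}) = 0$ for $q \neq n-1$, hence $\mathcal{S}^q(B^p) = 0$ for $q \neq n-1$ as well. The hypercohomology spectral sequence $E_1^{p,q} = \mathcal{S}^q(B^p) \Rightarrow \mathcal{S}^{p+q}(\pi[\mathfrak{m}])$ is then concentrated in the single row $q = n-1$, collapses at $E_2$, and in the lowest total degree gives
$$\mathcal{S}^{n-1}(\pi[\mathfrak{m}]) \;=\; \mathrm{Hom}_T\bigl(T/\mathfrak{m},\, \mathcal{S}^{n-1}(\pi_\mathfrak{m})\bigr).$$
Substituting $\mathcal{S}^{n-1}(\pi_\mathfrak{m}) \cong \sigma|_{\mathrm{Gal}_{F_\mathfrak{p}}} \otimes_T \rho[\sigma]$ and using that $\sigma$ is free of rank $n$ as a $T$-module delivers the target formula.

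The three assertions now follow formally. Admissibility of $\mathcal{S}^{n-1}(\pi[\mathfrak{m}])$ as a $B_\mathfrak{q}^\times$-representation is Scholze's general admissibility theorem applied to the admissible smooth $\mathbb{F}_p$-representation $\pi[\mathfrak{m}]$; non-vanishing follows from $\rho[\sigma][\mathfrak{m}] \neq 0$, which holds because $\rho[\sigma]$ is a nonzero $\mathfrak{m}$-power-torsion module over the local ring $T$ (inherited from $\pi_\mathfrak{m}$ being $\mathfrak{m}$-power-torsion); and the structural claim about indecomposable $\mathrm{Gal}_{F_\mathfrak{p}}$-subrepresentations is immediate from exhibiting $\mathcal{S}^{n-1}(\pi[\mathfrak{m}])$ as a direct sum of copies of $\bar{\sigma}|_{\mathrm{Gal}_{F_\mathfrak{p}}}$, becoming trivial when $\bar{\sigma}|_{\mathrm{Gal}_{F_\mathfrak{p}}}$ is irreducible.

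The main technical point I expect is the hypercohomology spectral sequence step: $B^\bullet$ is unbounded above, so convergence needs care, but because $\mathcal{S}^q$ vanishes outside $q = n-1$ every diagonal of the spectral sequence carries only finitely many nonzero entries and convergence is automatic. An entirely elementary alternative, should the spectral sequence be cumbersome to set up, is to split $B^\bullet$ into short exact sequences $0 \to Z^i \to B^i \to Z^{i+1} \to 0$ and iteratively exploit the $\delta$-functor long exact sequences of $\mathcal{S}^\bullet$, using the concentration to control the error terms; this reaches the same identification without spectral sequence machinery.
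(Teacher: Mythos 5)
Your proof is correct, but it follows a genuinely different and in fact sharper route than the paper's. The paper never computes $H^{n-1}_{\text{ét}}(\mathbb{P}_{\mathbb{C}_p}^{n-1},\mathcal{F}_{\pi[\mathfrak{m}]})$ exactly: it establishes (Lemma \ref{injective}) that the natural map $i_*$ into $H^{n-1}_{\text{ét}}(\mathbb{P}_{\mathbb{C}_p}^{n-1},\mathcal{F}_{\pi_\mathfrak{m}})[\mathfrak{m}]$ is injective, by an inductive vanishing argument on $H^{k}_{\text{ét}}(\mathbb{P}_{\mathbb{C}_p}^{n-1},\mathcal{F}_{C})$ for $C=\pi_\mathfrak{m}/\pi_\mathfrak{m}[\mathfrak{m}]$, and separately establishes non-vanishing by the zig-zag dévissage in the proof of Theorem \ref{torsionthm1}; the structural claim then follows from the injection into $\bar{\sigma}|_{\mathrm{Gal}_{F_\mathfrak{p}}}\otimes_k\rho[\sigma][\mathfrak{m}]$. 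You instead dualize the flatness hypothesis into $\mathrm{Ext}^i_T(T/\mathfrak{m},\pi_\mathfrak{m})=0$ for $i>0$, resolve $\pi[\mathfrak{m}]$ by finite direct sums of $\pi_\mathfrak{m}$, and let the one-row hypercohomology spectral sequence produce the isomorphism $H^{n-1}_{\text{ét}}(\mathbb{P}_{\mathbb{C}_p}^{n-1},\mathcal{F}_{\pi[\mathfrak{m}]})\cong H^{n-1}_{\text{ét}}(\mathbb{P}_{\mathbb{C}_p}^{n-1},\mathcal{F}_{\pi_\mathfrak{m}})[\mathfrak{m}]\cong\bar{\sigma}|_{\mathrm{Gal}_{F_\mathfrak{p}}}\otimes_k\rho[\sigma][\mathfrak{m}]$. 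This is strictly more than the paper proves: you show $i_*$ is an isomorphism onto the $\mathfrak{m}$-torsion, which under the flatness hypothesis partially answers the concern about the cokernel of $i_*$ raised in Remark \ref{rmkflatness}; it also gives non-vanishing more cleanly (via faithfulness and $\mathfrak{m}$-power-torsionness of $\rho[\sigma]$, which does follow from Lemma \ref{powertorsion} and the commutation of the functor with filtered colimits) and makes the indecomposability claim transparent. Two small caveats. First, your key input $\mathcal{S}^q(\pi_\mathfrak{m})=0$ for $q\neq n-1$ is not literally the stated Condition, which concerns $\mathbb{Z}_p$-coefficients; the universal-coefficient sequence handles all degrees except $q=n-2$, where one needs the absence of $\mathfrak{m}$-localized torsion in $H^{n-1}(\mathrm{Sh}_{UC^\mathfrak{p}},\mathbb{Z}_p)$ — but the paper's own Lemma \ref{injective} uses exactly the same vanishing in its induction step, so you are on equal footing. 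Second, you only need the implication from flatness to $\mathrm{Ext}$-vanishing, not the asserted equivalence, so the appeal to a local criterion for flatness in the converse direction can be dropped.
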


\begin{remark}
In \cite{LLHMPQ21}, Le-Le Hung-Morra-Park-Qian have obtained similar mod $p$ local-global compatibility result in the Fontaine-Laffaille cases under suitable conditions, by studying moduli stacks of Fontaine-Laffaille modules. Moreover, Zicheng Qian has found an argument which can be used to deal with the much larger class of multiplicity-free Galois representations $\bar{\sigma}|_{\mathrm{Gal}_{F_{\mathfrak{p}}}}$ for the local-global compatibility result in Theorem \ref{modplgc} above, cf. \cite{LQ21}.
\end{remark}

The organization is as follows. In section 2 we establish the global setup concerning KHT type Shimura varieties and then describe $p$-adic uniformization of these varieties in section 3. In section 4 we review the construction of Scholze; in section 5 we use the previous results to first obtain a weak version of local-global compatibility. In section 6 we prove a new cuspidality criterion generalizing the $n=2$ case. Finally in section 7 we prove the full local-global compatibility as well as the above-mentioned  result for torsion classes.

\medskip

\textbf{Acknowledgements}. The author would like to thank Pascal Boyer and Stefano Morra, for suggesting this project and for helpful discussions. It is also his pleasure to thank Zicheng Qian, Vincent S\'echerre and Zhixiang Wu for several helpful discussions and for their feedbacks, and thank the anonymous referee for many helpful comments and suggestions. This article is part of the author's PhD thesis and he wishes to thank ED Galil\'ee of Universit\'e Paris 13 and also the ANR grant ANR-14-CE25-0002-01 PerCoLaTor for their support.\\

\newpage

\begin{section}{Global setup }
In this section we consider some unitary similitude Shimura varieties and their cohomology. These so-called Kottwitz-Harris-Taylor type Shimura varieties are singled out and studied in \cite{Kot92}, and then further in \cite{HT01} (on a sub-class of it), hence the terminology. The class of varieties that we concern is a variant of the KHT class, but we still call it KHT type. We will work under the context of \cite{BZ99}, where $p$-adic uniformization results of unitary Shimura varieties are proved which will be used in our work later. Let us first introduce some notations and establish the global setup. One can refer to chapter $0$ of \cite{BZ99} for more details.

Fix an integer $n>2$ and a rational prime integer $p$. Let $K = FE_0$ be a CM field with $F \subseteq K$ totally real and $E_0$ imaginary quadratic. Let $B$ be a division algebra over $K$ of dimension $n^2$ with an involution of the second kind $'$ which we assume to be positive, i.e., for all nonzero $x \in B$ we have $tr_{B/\mathbb{Q}}(xx')>0$. Let $W=B$ as a $B\otimes_KB^{op}$-module and $\Phi: W \times W \to \mathbb{Q}$ an alternating nondegenerate pairing such that 
$$\Phi(bw_1,w_2)=\Phi(w_1,b'w_2)$$
for all $w_1, w_2 \in W$ and $b\in B$.

Let $*$ be the unique involution on $B$ such that
$$\Phi(w_1b,w_2)=\Phi(w_1,w_2b^*)$$
for all $w_1, w_2 \in W$ and $b\in B$. Fix an infinite place $\alpha: K \hookrightarrow \mathbb{C}$ of $K$ and its complex conjugate $\bar{\alpha}$, as well as an embedding $v: \overline{\mathbb{Q}} \hookrightarrow \overline{\mathbb{Q}}_p$.
We denote by $\mathfrak{p}_0, \mathfrak{p}_1,\ldots,\mathfrak{p}_m$ the prime ideals of $\mathcal{O}_F$ lying over $p$ with $\mathfrak{p}_0$ induced by the embedding $v$ above and set $\mathfrak{p}:=\mathfrak{p}_0$. We assume that they all split in $K$ with
$$\mathfrak{p}_i\mathcal{O}_K=\mathfrak{q}_i\bar{\mathfrak{q}}_i,\,\,\,\,\,\,\,\mathfrak{q}_i \neq \bar{\mathfrak{q}}_i,\,\,\,\,\,\,\, i=0,1,\ldots,m.$$
The prime ideals $\mathfrak{q}_0$ and $\bar{\mathfrak{q}}_0$ will also be denoted by $\mathfrak{q}$ and $\bar{\mathfrak{q}}$ respectively. We assume that $B_{\mathfrak{q}}:= B \otimes_K K_{\mathfrak{q}}$ is a division algebra of invariant $1/n$. The  existence  of  such  $B$  and  $\Phi$  follows  from  the  results  of  Kottwitz  and  Clozel (see \cite{Clo91}, Prop. 2.3]). In fact, our choice of $B$ is the same as the one in \cite{HT01} except that the local ramification at the finite place $\mathfrak{q}$ of $K$ where one is interested in could be different.

Let $\tilde{G}$ be the algebraic group over $F$ whose group of $R$-points for any $F$-algebra $R$ is given by
$$\tilde{G}(R):=\{(g,\lambda)\in(B^{op}\otimes_{F}R)^\times\times R^\times |\,gg^*=\lambda\}$$
and let $G:=\text{Res}_{F/\mathbb{Q}}(\tilde{G})$ be the Weil restriction of scalars so that for any $\mathbb{Q}$-algebra $T$ we have
$$G(T):=\{(g,\lambda)\in(B^{op}\otimes_{\mathbb{Q}}T)^\times\times (F\otimes_{\mathbb{Q}}T)^\times |\,gg^*=\lambda\}.$$
We will consider Shimura varieties associated with compact open subgroups $C$ of $$G(\mathbb{A}_f)=\tilde{G}(\mathbb{A}_{F,f})$$ with the form $C=C_pC^p$ where $C^p \subseteq G(\mathbb{A}_f^p)$ is compact open and $$C_p \subseteq G(\mathbb{Q}_p) =\prod_{i=0}^m\tilde{G}(F_{\mathfrak{p}_i})$$ decomposes as 
$$C_p= \prod_{i=0}^mC_{\mathfrak{p}_i},\,\,\,\,\,\,\,\,C_{\mathfrak{p}_i} \subseteq \tilde{G}(F_{\mathfrak{p}_i}).$$
As the involution $*$ induces an isomorphism 
$$B_{\bar{\mathfrak{q}}_i} \xrightarrow{\sim} B_{\mathfrak{q}_i}^{op}, $$
there are identifications for all $i$
$$\tilde{G}(F_{\mathfrak{p}_i}) \cong (B_{\mathfrak{q}_i}^{op})^\times \times F_{\mathfrak{p}_i}^\times.$$
In fact, in later parts $C$ will usually be of the form $C=C_\mathfrak{p}C^\mathfrak{p}$ with $C^\mathfrak{p} \subseteq \tilde{G}(\mathbb{A}_{F,f}^\mathfrak{p})$ (sufficiently small) compact open and $C_\mathfrak{p}=U\times \mathcal{O}^\times_{F_\mathfrak{p}} \subseteq \tilde{G}(F_\mathfrak{p})=(B_\mathfrak{q}^{op})^\times \times F_\mathfrak{p}^\times$ with $U \subseteq (B_\mathfrak{q}^{op})^\times$ compact open, in which case we write $\mathrm{Sh}_{UC^\mathfrak{p}}$ instead of $\mathrm{Sh}_{U\times \mathcal{O}^\times_{F_\mathfrak{p}}   \times C^\mathfrak{p}}$ for the Shimura variety associated with $C$.

Let $\mathbb{S}:=\text{Res}_{\mathbb{C}/\mathbb{R}}(\mathbb{G}_m)$ be the Deligne torus and $h$ be a morphism
$$h: \mathbb{S} \to G_{\mathbb{R}}$$
such that $h$ defines on $W_\mathbb{R}$ a Hodge structure of type $(1,0), (0,1)$ and such that  $\Phi(w_1,h(i)w_2)$ is a symmetric positive definite bilinear form on $W_\mathbb{R}$. Note that $h$ is unique up to $G(\mathbb{R)}$-conjugacy and we let $X$ denote the $G(\mathbb{R)}$-conjugacy class of $h$. Then $(G,X)$ defines a Shimura datum and for sufficiently small compact open subgroups $C \subseteq G(\mathbb{A}_f)$ as above we have a projective system of Shimura varieties $\mathrm{Sh}_C$ over its reflex field, denoted by $E$. 

The morphism $h$ defines a Hodge structure 
$$W_{\mathbb{C}} = W^{1,0} \oplus W^{0,1}$$
and we require the following condition to hold: when the trace of the action of an element $b\in B$ on $W^{1,0}$ is expressed in the form
$$tr_{\mathbb{C}}(b|W^{1,0})= \Sigma_{i: K \to \mathbb{C}}r_i(tr^0b)$$ 
the pair of integers $(r_i, r_{\overline{i}})$ is equal to $(1,n-1)$ at $i=\alpha$ and $(0,n)$ at other infinite places. (Here $tr^0$ denotes reduced trace.) This condition is equivalent to a signature condition as in (2.) of Lemma 1.7.1 of \cite{HT01}, where it is shown that such condition can always be satisfied.
Then we have $E=\alpha(K)$ and $E_v \cong K_{\mathfrak{q}}$. Denote by $\kappa$ the residue field of $E_v$.

We fix a tame level, i.e. a compact open subgroup $C^\mathfrak{p}$ of $\tilde{G}(\mathbb{A}_{F,f}^\mathfrak{p})$ and let $\mathcal{P}$ denote the set of finite places $w$ of $K$ such that
\begin{itemize}
    \item $w|_{\mathbb{Q}} \neq p$;
    \item $w$ is split over $F$;
    \item $B$ is split at $w$ (i.e., $\tilde{G}(F_u) \cong \mathrm{GL}_n(F_u) \times F_u^\times $ where $u=w|_F$) and the component $C_u$ of $C^\mathfrak{p}$ at $u$ is maximal.
\end{itemize}
Consider the abstract Hecke algebra 
$$\mathbb{T}=\mathbb{T}_{\mathcal{P}}:= \mathbb{Z}[T^{(j)}_{w}: w \in \mathcal{P}, j=1,2,\ldots,n]$$
where $T^{(j)}_{w}$ is the Hecke operator corresponding to the double coset 
$$ \Big[\mathrm{GL}_{n}(\mathcal{O}_{F_{w}})
\begin{pmatrix}
\varpi_{w}1_{j} & 0\\
0 & 1_{n-j}
\end{pmatrix}\mathrm{GL}_{n}(\mathcal{O}_{F_{w}}) \Big].$$
Here $\varpi_{w}$ is a uniformizer of the local field $F_w$.
Then the Hecke algebra $\mathbb{T}$ acts on $H^i(\mathrm{Sh}_{UC^\mathfrak{p},\mathbb{C}},\mathbb{Z}_p)$ for all compact open $U\subseteq (B_\mathfrak{q}^{op})^\times$. Fix a finite field $\mathbb{F}_q$ with $q$ elements for $q$ a power of $p$. Let 
$$\bar{\sigma}: \mathrm{Gal}_{K} \to \mathrm{GL}_n(\mathbb{F}_q)$$
be an absolutely irreducible continuous representation of $\mathrm{Gal}_{K}$. We can associate to $\bar{\sigma}$ (together with $\mathcal{P}$) a maximal ideal $\mathfrak{m}$ of $\mathbb{T}$; it is the kernel of the map
$$ \mathbb{T} \to \mathbb{F}_q , \,\,\,\,\,\, T^{(j)}_{w} \mapsto (-1)^j(\textbf{N}w)^{-j(j-1)/2}a^{(j)}_w, \,\,\,\,\,\,  w \in \mathcal{P}, \,\, j=1,2,\ldots,n $$ where $\text{Frob}_w$ for $w \in \mathcal{P}$ is the geometric Frobenius of $\mathrm{Gal}_{K_w}$, $\textbf{N}w$ is the cardinality of the residue field of $K_w$, and the $a^{(j)}_w \in \mathbb{F}_q$ are such that the characteristic polynomial of $\bar{\sigma}(\text{Frob}_w)$ equals
$$X^{n} + \cdots + a^{(j)}_wX^{n-j} + \cdots + a^{(n)}_w.$$
Thus $\mathfrak{m}$ is a maximal ideal containing $p$; we assume further that 

\begin{condition}
For all compact open $U$, we have $$H^{i}(\mathrm{Sh}_{UC^\mathfrak{p},\mathbb{C}},\mathbb{Z}_p)_\mathfrak{m} \neq 0$$
only when $i = n-1$. 
\end{condition}

\begin{remark}
The above condition on vanishing of cohomology outside of the middle degree is satisfied, for instance, when we impose certain generic condition on $\mathfrak{m}$ or on its associated Galois representation, cf. eg. Th\'eorème 4.7 of \cite{Boy17}, and also the main results of \cite{CS15}.
\end{remark}

\begin{remark}
The reason that we impose the above condition is twofold: on the one hand, it gives an easier comparison between the completed cohomology and cohomology with infinite level at $\mathfrak{p}$ in $p$-torsion coefficients, of the Shimura  varieties; on the other hand, under the flatness assumption in Section \ref{seclgc} it implies an important injectivity result (cf. Lemma \ref{injective}).
\end{remark}

Let $\mathbb{T}(UC^\mathfrak{p})$ be the image of $\mathbb{T}$ in $\text{End}(H^{n-1}(\mathrm{Sh}_{UC^\mathfrak{p},\mathbb{C}},\mathbb{Z}))$ and $\mathbb{T}(UC^\mathfrak{p})_\mathfrak{m}$ its $\mathfrak{m}$-adic completion. Note then that $\mathbb{T}(UC^\mathfrak{p})_\mathfrak{m}$ is isomorphic to the localization at $\mathfrak{m}$ of the image of $\mathbb{Z}_p[T^{(i)}_{w}: w \in \mathcal{P}, i=1,2,\ldots,n]$ in $\text{End}(H^{n-1}(\mathrm{Sh}_{UC^\mathfrak{p},\mathbb{C}},\mathbb{Z}_p))$, which one meets more often in the literature. (This is because the localization at a maximal ideal $\mathfrak{m}$ of a finite $\mathbb{Z}_p$-algebra is automatically complete with respect to the  $\mathfrak{m}$-adic topology.) Thus $\mathbb{T}(UC^\mathfrak{p})_\mathfrak{m}$ acts faithfully on $H^{n-1}(\mathrm{Sh}_{UC^\mathfrak{p},\mathbb{C}},\mathbb{Z}_p)_\mathfrak{m}$ and we have an associated Galois representation:

\begin{proposition}\label{assgal}
Assume  from now on that $\mathfrak{m}$ satisfies the following Assumption \ref{tateconj}. There is a unique (up to conjugation) continuous $n$-dimensional Galois representation
$$\sigma = \sigma_{\mathfrak{m}}: \mathrm{Gal}_{K} \to \mathrm{GL}_{n}(\mathbb{T}(UC^\mathfrak{p})_\mathfrak{m})$$
unramified at almost all places and at every place in $\mathcal{P}$, such that for every $w \in \mathcal{P}$, $\sigma(\mathrm{Frob}_{w})$ has characteristic polynomial
$$X^{n} + \cdots + (-1)^{j}(\textbf{N}w)^{j(j-1)/2}T_{w}^{(j)}X^{n-j} + \cdots + (-1)^{n}(\textbf{N}w)^{n(n-1)/2}T_{w}^{(n)}.$$
\end{proposition}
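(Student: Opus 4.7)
The plan is to construct $\sigma$ by interpolating the Galois representations attached to the individual cohomological automorphic representations that contribute to $H^{n-1}(\mathrm{Sh}_{UC^\mathfrak{p},\overline{K}},\mathbb{Q}_p)_\mathfrak{m}$, and then to upgrade the resulting pseudo-character to a genuine $\mathrm{GL}_n$-valued representation using residual absolute irreducibility of $\bar{\sigma}$. Recall that $\mathbb{T}(UC^\mathfrak{p})_\mathfrak{m}$ is a finite local $\mathbb{Z}_p$-algebra acting faithfully on the finitely generated $\mathbb{Z}_p$-module $H^{n-1}(\mathrm{Sh}_{UC^\mathfrak{p},\overline{K}},\mathbb{Z}_p)_\mathfrak{m}$, which carries a commuting continuous $\mathrm{Gal}_K$-action; inverting $p$ and extending scalars to $\overline{\mathbb{Q}}_p$ gives a decomposition indexed by automorphic representations $\pi$ of $\tilde{G}(\mathbb{A}_F)$ of the prescribed infinity type with $(\pi_f)^{C^\mathfrak{p}}\neq 0$ whose Hecke eigensystem at $\mathcal{P}$ reduces mod $p$ to the system prescribed by $\mathfrak{m}$.

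At each such $\pi$, I would transfer via Jacquet-Langlands at $\mathfrak{q}$ (where $B$ is ramified) and unitary base change (following Labesse, Mok, and Kaletha-Minguez-Shin-White in the inner form case) to a conjugate self-dual cohomological cuspidal automorphic representation $\Pi$ of $\mathrm{GL}_n(\mathbb{A}_K)$; the work of Harris-Taylor, Taylor-Yoshida, Shin, and Chenevier-Harris then attaches to $\Pi$ a continuous Galois representation $\sigma_\pi:\mathrm{Gal}_K\to\mathrm{GL}_n(\overline{\mathbb{Q}}_p)$, unramified at every $w\in\mathcal{P}$ and at almost all other places, with Frobenius characteristic polynomial at $w\in\mathcal{P}$ equal to the prescribed polynomial in the Hecke eigenvalues of $\pi$. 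Summing over all contributing $\pi$ and passing to traces produces a $\mathrm{Gal}_K$-continuous $n$-dimensional pseudo-character $T:\mathrm{Gal}_K\to\mathbb{T}(UC^\mathfrak{p})_\mathfrak{m}[1/p]$; Chebotarev density combined with the fact that each $T(\mathrm{Frob}_w)$ for $w\in\mathcal{P}$ is an explicit polynomial in the $T^{(j)}_w$ shows that $T$ already takes values in $\mathbb{T}(UC^\mathfrak{p})_\mathfrak{m}$. The reduction $T\bmod\mathfrak{m}$ equals $\mathrm{tr}\,\bar{\sigma}$, which is absolutely irreducible; hence by Rouquier-Nyssen lifting of pseudo-representations over henselian local rings, $T$ lifts uniquely up to conjugation to a continuous representation $\sigma=\sigma_\mathfrak{m}:\mathrm{Gal}_K\to\mathrm{GL}_n(\mathbb{T}(UC^\mathfrak{p})_\mathfrak{m})$ with the required characteristic polynomial at every $w\in\mathcal{P}$, and uniqueness up to conjugation follows from Chebotarev and the uniqueness statement in Rouquier-Nyssen.

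The hard part, in my view, is controlling the Jacquet-Langlands and base change step: one must verify that every $\pi$ contributing to $H^{n-1}_\mathfrak{m}$ transfers to a genuine RACSDC representation of $\mathrm{GL}_n(\mathbb{A}_K)$ to which the known constructions apply, which in turn uses the concentration of cohomology in middle degree to pin down temperedness of the contributing $\pi$ and the absolute irreducibility of $\bar{\sigma}$ to preclude pathological non-tempered contributions. This is where Assumption \ref{tateconj} is used: through the relevant known case of the Tate conjecture for KHT-type Shimura varieties, it guarantees that the Galois action on the middle cohomology is compatible with the interpolated $\sigma$ on the nose rather than only after Galois semisimplification, which is what allows the proposition to be phrased directly in terms of $\mathbb{T}(UC^\mathfrak{p})_\mathfrak{m}$ without passing to a semisimplification of cohomology.
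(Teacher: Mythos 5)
Your proposal follows essentially the same route as the paper: decompose $H^{n-1}(\mathrm{Sh}_{UC^\mathfrak{p},\overline{K}},\overline{\mathbb{Q}}_p)_\mathfrak{m}$ automorphically, attach Galois representations to the contributing $\pi$ via base change and the results of Harris--Taylor and Shin, observe via Chebotarev that the Frobenius characteristic polynomials at $w\in\mathcal{P}$ already lie in $\mathbb{T}(UC^\mathfrak{p})_\mathfrak{m}$, and descend the resulting trace data to a genuine representation using residual absolute irreducibility of $\bar{\sigma}$; your reading of the role of Assumption \ref{tateconj} (forcing semisimplicity of each $R^{n-1}(\pi)$ so that it is a sum of copies of an $n$-dimensional representation, rather than only agreeing with $\sigma$ after semisimplification) matches the paper's appeal to \cite{FN19}. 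The one substantive caveat is your use of Rouquier--Nyssen to lift the pseudo-character: that theorem requires $n!$ to be invertible in the coefficient ring, which fails for the small primes $p\le n$ that are squarely in scope here; the paper instead invokes Chenevier's theory of determinants (Theorem 2.22 of \cite{Che11}), which carries no such restriction. With that substitution your argument coincides with the paper's.
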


\begin{proof}
As the $\mathrm{Sh}_{UC^\mathfrak{p}}$'s are projective, Matsushima's formula (cf. VII. 5.2 of \cite{BW80}) gives an isomorphism 
$$H^{n-1}(\mathrm{Sh}_{UC^\mathfrak{p}}(\mathbb{C}),\overline{\mathbb{Q}}_p) \cong \bigoplus_{\pi}\pi_f^{U\times \mathcal{O}^\times_{F_\mathfrak{p}}   \times C^\mathfrak{p}} \otimes H^{n-1}(\text{Lie}\,G(\mathbb{R}), K_\infty, \pi_\infty)$$ 
where $\pi$ runs  over irreducible constituents (taken with its multiplicity) of the space of automorphic forms on $G(\mathbb{Q})\backslash G(\mathbb{A})$, and $K_\infty$ is a maximal compact subgroup of $G(\mathbb{R})$. Then by Artin comparison theorem (cf. \cite{SGA4}, Expos\'e XI, Th\'eor\`eme 4.4) we have
$H^{n-1}(\mathrm{Sh}_{UC^\mathfrak{p}}(\mathbb{C}),\overline{\mathbb{Q}}_p) \cong H^{n-1}_{\text{\'et}}(\mathrm{Sh}_{UC^\mathfrak{p},\overline{K}},\mathbb{Z}_p)\otimes_{\mathbb{Z}_p}\overline{\mathbb{Q}}_p$
and we may write 
$$H^{n-1}_{\text{\'et}}(\mathrm{Sh}_{UC^\mathfrak{p},\overline{K}},\overline{\mathbb{Q}}_p) = \bigoplus_{\pi} \pi_f^{U\times \mathcal{O}^\times_{F_\mathfrak{p}}   \times C^\mathfrak{p}} \otimes R^{n-1}(\pi)$$
where $\pi$ runs over cuspidal automorphic representations of $G(\mathbb{A})$ over $\overline{\mathbb{Q}}_p$ (taken with its multiplicity) and where $R^{n-1}(\pi)$ is a finite dimensional continuous representation of $\mathrm{Gal}(\overline{K}/K)$. Localizing both sides at $\mathfrak{m}$, we obtain 
\begin{equation} \label{langlands}
H^{n-1}_{\text{\'et}}(\mathrm{Sh}_{UC^\mathfrak{p},\overline{K}},\overline{\mathbb{Q}}_p)_\mathfrak{m} = \bigoplus_{\pi \in \mathcal{A}(\mathfrak{m})} \pi_f^{U\times \mathcal{O}^\times_{F_\mathfrak{p}}   \times C^\mathfrak{p}} \otimes R^{n-1}(\pi)
\end{equation}
where $\mathcal{A}(\mathfrak{m})$ denotes the subset of cuspidal automorphic representations $\pi$ of $G$ such that the kernel of the corresponding map $\psi_\pi: \mathbb{T}(UC^\mathfrak{p}) \to \overline{\mathbb{Q}}_p$ induced by $\pi$ is contained in $\mathfrak{m}$. As $\bar{\sigma}_\mathfrak{m}$ is assumed to be absolutely irreducible, each of the $\pi$ belonging to $\mathcal{A}(\mathfrak{m})$ will have base change $(\Psi, \Pi)$ to $\mathbb{A}_{E_0}^\times \times \mathrm{GL}_n(\mathbb{A}_K)$ so that $\Pi$ is cuspidal. We now make the following assumption on $\mathfrak{m}$ to apply the known cases of the Tate conjecture in the setting of Shimura varieties, see Remark \ref{remtate} however.

\begin{assumption}\label{tateconj}
Assume that for every $\pi \in \mathcal{A}(\mathfrak{m})$, the Galois representation $\rho_{\Pi,p}$ associated with $\Pi$ by the global Langlands correspondence is strongly irreducible, i.e., $\rho_{\Pi,p}$ is irreducible and not induced from a proper open subgroup of $\mathrm{Gal}_K$.
\end{assumption}
 Thus by Theorem 2.25 of \cite{FN19}, each $R^{n-1}(\pi)$ is a semisimple $\overline{\mathbb{Q}}_p[\mathrm{Gal}(\overline{K}/K)]$-module.
Then by Proposition VII.1.8 and Proposition VI.2.7 of \cite{HT01} (or similarly by Theorem 6.4 and Corollary 6.5 of \cite{Shin11}), for every $\pi$ in $\mathcal{A}(\mathfrak{m})$, $R^{n-1}(\pi)$ is a direct sum of finite copies of an $n$-dimensional $\mathrm{Gal}_K$ representation $\tilde{R}^{n-1}(\pi)$.

We get from \eqref{langlands} an isomorphism
$$\mathbb{T}(UC^\mathfrak{p})_\mathfrak{m} \otimes_{\mathbb{Z}_p} \overline{\mathbb{Q}}_p \xrightarrow{\sim} \prod_{\pi \in \mathcal{A}(\mathfrak{m})} \overline{\mathbb{Q}}_p$$
by sending $T_w^{(i)}$ on the left hand side to its corresponding Hecke eigenvalue on $\pi_w^{\mathrm{GL}_n(\mathcal{O}_w)}$. This isomorphism is defined over a finite extension of $\mathbb{Q}_p$ as $\mathbb{T}(UC^\mathfrak{p})_\mathfrak{m}$ is of finite type over $\mathbb{Z}_p$.
Collecting the $\tilde{R}^{n-1}(\pi)$'s above we obtain a representation
$$\mathrm{Gal}_K \to \mathrm{GL}_{n}(\mathbb{T}(UC^\mathfrak{p})_\mathfrak{m}[1/p]).$$
On the other hand, all characteristic polynomials of Frobenius elements take values in $\mathbb{T}(UC^\mathfrak{p})_\mathfrak{m}$, so one obtains a determinant with values in $\mathbb{T}(UC^\mathfrak{p})_\mathfrak{m}$. But $\bar{\sigma}$ is absolutely irreducible by assumption, so one obtains the existence of the desired representation $\sigma$, cf. Theorem 2.22 of \cite{Che11}. That for every $w \in \mathcal{P}$ the Frobenius action $\sigma(\mathrm{Frob}_{w})$ has the exhibited characteristic polynomial is classical and is proved in Theorem 1 of \cite{Kot92}, cf. also \cite{Wed00}. Finally the uniqueness follows from the fact that the set $\{\mathrm{Frob}_w| w \in \mathcal{P}\}$ is dense inside $\mathrm{Gal}_K$, which is a consequence of Chebotarev's theorem.
\end{proof}

\begin{remark}\label{remtate}
By a strong form of the Tate Conjecture (see eg. \cite{Moo19}), the semisimplicity of the $R^{n-1}(\pi)$'s above always holds true but at this moment it is not fully proved yet, so we made the assumption \ref{tateconj} on $\mathfrak{m}$ to use the results of \cite{FN19}.
\end{remark}

\begin{remark}
In a recent work \cite{Lee22}, S-Y. Lee also obtained similar semisimplicity result for some abelian-type Shimura varieties, building on the work of \cite{FN19}, which however requires again strongly irreducibility hypothesis as above.
\end{remark}

In particular, we have $\sigma \, (\text{mod}\,\mathfrak{m})=\,\overline{\sigma}$. Recall (Definition 5.2, \cite{Sch18}) that for a Noetherian local ring $(R, \mathfrak{m}_R)$, a group $G$ and an $n$-dimensional representation $\sigma_R: G \to \mathrm{GL}_n(R)$ of $G$ whose reduction modulo $\mathfrak{m}_R$ is absolutely irreducible, an $R[G]$-module is said to be $\sigma_R$-typic if there exists an $R$-module $M_0$ such that 
$M=\sigma_R\otimes_R M_0$ with $G$ acting trivially on $M_0$. 
The following proposition generalizes the case of $n=2$.

\begin{proposition} \label{shintypica}
The $\mathbb{T}(UC^\mathfrak{p})_\mathfrak{m}[\mathrm{Gal}_K]$-module $H^{n-1}(\mathrm{Sh}_{UC^\mathfrak{p},\overline{K}},\mathbb{Z}_p)_\mathfrak{m}$ is $\sigma$-typic.
\end{proposition}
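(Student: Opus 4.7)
The plan is to apply the formalism of $\sigma$-typic modules developed in Section 5 of \cite{Sch18}. Set $R := \mathbb{T}(UC^\mathfrak{p})_\mathfrak{m}$ and $M := H^{n-1}(\mathrm{Sh}_{UC^\mathfrak{p},\overline{K}},\mathbb{Z}_p)_\mathfrak{m}$. Since $\bar{\sigma}$ is absolutely irreducible, Scholze's formalism yields an equivalence of categories between $R$-modules (with trivial $\mathrm{Gal}_K$-action) and $\sigma$-typic $R[\mathrm{Gal}_K]$-modules via $M_0 \mapsto \sigma \otimes_R M_0$, with quasi-inverse $M \mapsto \mathrm{Hom}_{R[\mathrm{Gal}_K]}(\sigma, M)$. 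Hence $\sigma$-typicity of $M$ is equivalent to the evaluation map
\[
\sigma \otimes_R \mathrm{Hom}_{R[\mathrm{Gal}_K]}(\sigma, M) \longrightarrow M
\]
being an isomorphism, and by the criterion in \emph{loc.\ cit.} this reduces to two conditions: (i) $\sigma$-typicity after inverting $p$, and (ii) a Jordan-Hölder condition on $M/\mathfrak{m} M$.

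For (i), I would invoke the decomposition \eqref{langlands} established in the proof of Proposition \ref{assgal}: each $R^{n-1}(\pi)$ is a direct sum of copies of $\tilde R^{n-1}(\pi)$, and under the isomorphism $R \otimes_{\mathbb{Z}_p}\overline{\mathbb{Q}}_p \xrightarrow{\sim} \prod_{\pi \in \mathcal{A}(\mathfrak{m})} \overline{\mathbb{Q}}_p$ the $\pi$-component of $\sigma$ is identified with $\tilde R^{n-1}(\pi)$. This yields $M[1/p] \cong \sigma[1/p] \otimes_{R[1/p]} M_0'$ with $M_0' = \bigoplus_{\pi}(\pi_f^{U \times \mathcal{O}^{\times}_{F_\mathfrak{p}} \times C^\mathfrak{p}})^{\oplus d_\pi}$ carrying the trivial Galois action. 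For (ii), the characteristic polynomials of $\mathrm{Frob}_w$ ($w \in \mathcal{P}$) acting on $M/\mathfrak{m} M$ coincide with those of $\bar\sigma(\mathrm{Frob}_w)$ by the defining property of $\sigma$ in Proposition \ref{assgal}; Brauer-Nesbitt together with Chebotarev density then forces every Jordan-Hölder constituent of $M/\mathfrak{m} M$ as an $\mathbb{F}_q[\mathrm{Gal}_K]$-module to have semisimplification $\bar\sigma$, and hence, by absolute irreducibility of $\bar\sigma$, to be isomorphic to $\bar\sigma$.

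The main obstacle is the integral descent, that is, passing from $\sigma[1/p]$-typicity of $M[1/p]$ to $\sigma$-typicity of $M$; this is exactly where the absolute irreducibility of $\bar\sigma$ enters decisively, via the Jordan-Hölder verification above. A preliminary subtlety is that one needs $M$ to be $p$-torsion-free so that $M$ embeds into $M[1/p]$ and $\mathrm{Hom}_{R[\mathrm{Gal}_K]}(\sigma,M)$ embeds into $M_0'$; this follows from the vanishing condition on cohomology outside the middle degree together with the universal coefficient sequence $0 \to \mathbb{Z}_p \xrightarrow{p} \mathbb{Z}_p \to \mathbb{F}_p \to 0$. Finite generation of $M$ as an $R$-module, which underpins the Nakayama-type step implicit in Scholze's criterion, follows from finiteness of $p$-adic étale cohomology of the proper Shimura varieties $\mathrm{Sh}_{UC^\mathfrak{p}}$ combined with the faithful action of $R$ on $M$.
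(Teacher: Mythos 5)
Your core strategy coincides with the paper's: embed $M = H^{n-1}(\mathrm{Sh}_{UC^\mathfrak{p},\overline{K}},\mathbb{Z}_p)_\mathfrak{m}$ into $M[1/p]$, prove that the rationalization is $\sigma$-typic using the decomposition \eqref{langlands} together with the multiplicity-shifting identification of each $R^{n-1}(\pi)$ with copies of $\tilde R^{n-1}(\pi)$, and then descend integrally via the formalism of Section 5 of \cite{Sch18}. Your step (i), and your remarks on torsion-freeness and finite generation, match what the paper does (the paper simply observes that $M$ is a submodule of $M[1/p]$ and invokes Proposition 5.4 of \cite{Sch18}).

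The problem is the ``criterion'' you attribute to \emph{loc.\ cit.} and, in particular, your verification of condition (ii). Proposition 5.4 of \cite{Sch18} states that an $R[\mathrm{Gal}_K]$-module that embeds into a $\sigma$-typic module is itself $\sigma$-typic; the only hypothesis is the absolute irreducibility of $\bar\sigma$, which forces $R[\mathrm{Gal}_K]\to M_n(R)$ to be surjective (topological Nakayama plus Burnside), after which Morita theory performs the integral descent. No separate Jordan--Hölder condition on $M/\mathfrak{m}M$ is required. Moreover, as you argue it, condition (ii) is not established: the characteristic polynomial of $\mathrm{Frob}_w$ on $M/\mathfrak{m}M$ does not coincide with that of $\bar\sigma(\mathrm{Frob}_w)$ (the dimensions differ); what the congruence relation actually gives is that $\mathrm{Frob}_w$ \emph{satisfies} the reduced Hecke polynomial on $M/\mathfrak{m}M$, hence that the Frobenius eigenvalues on any constituent lie among those of $\bar\sigma(\mathrm{Frob}_w)$. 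Brauer--Nesbitt needs equality of characteristic polynomials on each constituent (in particular control of its dimension), and eigenvalue containment at Frobenii alone does not yield that a given irreducible constituent is isomorphic to $\bar\sigma$; one would need the full Cayley--Hamilton/pseudo-representation machinery to make such an argument work. Fortunately this entire step is superfluous: once $M\hookrightarrow M[1/p]$ and the $\sigma[1/p]$-typicity of $M[1/p]$ are in hand, Proposition 5.4 of \cite{Sch18} finishes the proof, so your argument is repaired simply by deleting condition (ii) and its purported verification.
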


\begin{proof}
Since $H^{n-1}(\mathrm{Sh}_{UC^\mathfrak{p},\overline{K}},\mathbb{Z}_p)_\mathfrak{m}$ is a submodule of $H^{n-1}(\mathrm{Sh}_{UC^\mathfrak{p},\overline{K}},\mathbb{Z}_p)_\mathfrak{m}[1/p]$ (over the ring $\mathbb{T}(UC^\mathfrak{p})_\mathfrak{m}[\mathrm{Gal}_K]$), by Proposition 5.4 of \cite{Sch18} it suffices to show that 
$$H^{n-1}(\mathrm{Sh}_{UC^\mathfrak{p},\overline{K}},\mathbb{Z}_p)_\mathfrak{m}\otimes_{\mathbb{Z}_p}\overline{\mathbb{Q}}_p$$ is $\sigma$-typic. But this follows from the description of the $\overline{\mathbb{Q}}_p$-cohomology of $\mathrm{Sh}_{UC^\mathfrak{p},\overline{K}}$, as in the proof of Proposition \ref{assgal}. Indeed, we may shift the multiplicity of each Galois representation $\tilde{R}^{n-1}(\pi)$ to $\pi_f^{U\times \mathcal{O}^\times_{F_\mathfrak{p}}   \times C^\mathfrak{p}}$ and then use the isomorphism of $\mathrm{Gal}_K$-modules ${\pi_f^{U\times \mathcal{O}^\times_{F_\mathfrak{p}}   \times C^\mathfrak{p}}\otimes_{\overline{\mathbb{Q}}_p}\tilde{R}^{n-1}(\pi)} \cong {\pi_f^{U\times \mathcal{O}^\times_{F_\mathfrak{p}}   \times C^\mathfrak{p}} \otimes_{\mathbb{T}(UC^\mathfrak{p})_\mathfrak{m}} (\mathbb{T}(UC^\mathfrak{p})_\mathfrak{m})^{\oplus n}}$ obtained from the following diagram

\begin{center}
\begin{tikzcd}
{\pi_f^{U\times \mathcal{O}^\times_{F_\mathfrak{p}}   \times C^\mathfrak{p}}\otimes_{\overline{\mathbb{Q}}_p}\tilde{R}^{n-1}(\pi)} \arrow[rd] &   & {\pi_f^{U\times \mathcal{O}^\times_{F_\mathfrak{p}}   \times C^\mathfrak{p}} \otimes_{\mathbb{T}(UC^\mathfrak{p})_\mathfrak{m}} (\mathbb{T}(UC^\mathfrak{p})_\mathfrak{m})^{\oplus n}} \arrow[ld] \\
             & {\pi_f^{U\times \mathcal{O}^\times_{F_\mathfrak{p}}   \times C^\mathfrak{p}} \otimes_{\mathcal{T}} (\mathcal{T})^{\oplus n}} &             
\end{tikzcd}
\end{center}
where $\mathcal{T}=\mathbb{T}(UC^\mathfrak{p})_\mathfrak{m} \otimes_{\mathbb{Z}_p}\overline{\mathbb{Q}}_p$ and both arrows are isomorphisms of $\mathrm{Gal}_K$-modules induced by the natural inclusion on the second factor of tensor product. By summing up over $\pi \in \mathcal{A}(\mathfrak{m})$, we obtain an isomorphism
$$\bigoplus_{\pi\in\mathcal{A}(\mathfrak{m})} \big(\pi_f^{U\times \mathcal{O}^\times_{F_\mathfrak{p}}   \times C^\mathfrak{p}}\otimes_{\overline{\mathbb{Q}}_p}R^{n-1}(\pi)\big) \cong  M\otimes_{\mathbb{T}(UC^\mathfrak{p})_\mathfrak{m}}\mathbb{T}(UC^\mathfrak{p})_\mathfrak{m}^{\oplus n}$$ as $\mathrm{Gal}_K$-modules for some $\mathbb{T}(UC^\mathfrak{p})_\mathfrak{m}$-module $M$.

\end{proof}

Now we pass to completed cohomology. Let
$$\tilde{H}^{n-1}(C^\mathfrak{p}, \mathbb{Z}_{p}):=\underset{k}{\underleftarrow{\text{lim}}}\,\underset{U}{\underrightarrow{\text{lim}}}H^{n-1}(\mathrm{Sh}_{UC^\mathfrak{p},\overline{K}}, \mathbb{Z}/p^k\mathbb{Z}),$$
and
$$\tilde{H}^{n-1}(C^\mathfrak{p}, \mathbb{Z}_{p})_\mathfrak{m}:=\underset{k}{\underleftarrow{\text{lim}}}\,\underset{U}{\underrightarrow{\text{lim}}}H^{n-1}(\mathrm{Sh}_{UC^\mathfrak{p},\overline{K}}, \mathbb{Z}/p^k\mathbb{Z})_\mathfrak{m}$$
where $U$ runs over all compact open subgroups of $(B_\mathfrak{q}^{op})^\times$ in each definition.

Then the inverse limit 
$$\mathbb{T}(C^\mathfrak{p})_{\mathfrak{m}}:=\underset{U}{\underleftarrow{\text{lim}}}\mathbb{T}(UC^\mathfrak{p})_{\mathfrak{m}}$$
acts faithfully and continuously on $\tilde{H}^{n-1}(C^\mathfrak{p}, \mathbb{Z}_{p})_\mathfrak{m}$. Then the same argument as in the proof of Proposition 5.7, \cite{Sch18} shows the following result.

\begin{proposition}
There is a unique (up to conjugation) continuous $n$-dimensional Galois representation
$$\sigma=\sigma_{\mathfrak{m}}: G_{K} \to \mathrm{GL}_{n}(\mathbb{T}(C^\mathfrak{p})_{\mathfrak{m}})$$
unramified at almost all places, such that for every $w \in \mathcal{P}$, $\sigma(\text{Frob}_{w})$ has characteristic polynomial
$$X^{n} + \cdots + (-1)^{j}(\textbf{N}w)^{j(j-1)/2}T_{w}^{(j)}X^{n-j} + \cdots + (-1)^{n}(\textbf{N}w)^{n(n-1)/2}T_{w}^{(n)}.$$
The ring $\mathbb{T}(C^\mathfrak{p})_{\mathfrak{m}}$ is a complete Noetherian local ring with finite residue field.
\end{proposition}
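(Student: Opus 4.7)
The plan is to follow Scholze's strategy for Proposition~5.7 in \cite{Sch18}, building the Galois representation by passing to the limit from the finite-level representations of Proposition~\ref{assgal} and then checking the ring-theoretic properties of $\mathbb{T}(C^\mathfrak{p})_\mathfrak{m}$ separately. First, for each sufficiently small $U \subseteq (B_\mathfrak{q}^{op})^\times$, Proposition~\ref{assgal} produces a representation $\sigma_U : \mathrm{Gal}_K \to \mathrm{GL}_n(\mathbb{T}(UC^\mathfrak{p})_\mathfrak{m})$ unique up to conjugation. For $U' \subseteq U$ the natural map $\mathbb{T}(U'C^\mathfrak{p})_\mathfrak{m} \to \mathbb{T}(UC^\mathfrak{p})_\mathfrak{m}$ intertwines $\sigma_{U'}$ with $\sigma_U$ on the characteristic polynomials at all $w\in\mathcal{P}$, and absolute irreducibility of $\bar\sigma$ together with the theorem of Carayol-Serre (or Chenevier's Theorem~2.22 of \cite{Che11}) promotes this compatibility of traces to an actual conjugation between the two representations after base change. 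Hence the $\sigma_U$ are compatible up to conjugation, and their traces (more precisely, the associated Chenevier determinants) assemble into a determinant $D$ of dimension $n$ on $\mathrm{Gal}_K$ with values in $\mathbb{T}(C^\mathfrak{p})_\mathfrak{m} = \varprojlim_U \mathbb{T}(UC^\mathfrak{p})_\mathfrak{m}$.

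Second, because $\bar\sigma$ is absolutely irreducible, a second application of Chenevier's theorem lifts $D$ to a genuine continuous representation $\sigma : \mathrm{Gal}_K \to \mathrm{GL}_n(\mathbb{T}(C^\mathfrak{p})_\mathfrak{m})$, necessarily unique up to conjugation, whose reduction modulo the maximal ideal recovers $\bar\sigma$. The prescribed characteristic polynomial at each $w\in\mathcal{P}$ holds by construction since it holds modulo every $\ker(\mathbb{T}(C^\mathfrak{p})_\mathfrak{m} \to \mathbb{T}(UC^\mathfrak{p})_\mathfrak{m})$ and these kernels have trivial intersection. Uniqueness of $\sigma$ then follows from Chebotarev density, since $\{\mathrm{Frob}_w\mid w\in\mathcal{P}\}$ is dense in $\mathrm{Gal}_K$ and any two $\sigma$'s satisfying the prescription share all Frobenius characteristic polynomials.

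Finally, for the ring-theoretic assertion, local-ness and finiteness of the residue field are inherited from the finite-level rings: each $\mathbb{T}(UC^\mathfrak{p})_\mathfrak{m}$ is local with residue field a finite extension of $\mathbb{F}_q$ contained in $\mathbb{T}/\mathfrak{m}$, and the transition maps are local, so $\mathbb{T}(C^\mathfrak{p})_\mathfrak{m}$ is local with the same finite residue field; completeness is automatic from the inverse limit definition once one notes the transition maps are surjective (each Hecke operator at levels $U' \subseteq U$ maps to the same element). The main obstacle is Noetherianity. The strategy is to use $\sigma$ to obtain a surjection from a universal deformation ring: by Mazur's theory, absolute irreducibility of $\bar\sigma$ (together with the unramified-almost-everywhere condition, which may be enforced by fixing a finite set $S \supseteq \mathcal{P}$ of ramification) provides a universal deformation ring $R^{\mathrm{univ}}_{\bar\sigma, S}$ that is Noetherian (topologically finitely generated over $W(\mathbb{F}_q)$ thanks to finiteness of $H^1$ in the unramified-outside-$S$ setting for number fields). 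The universal property applied to $\sigma$ yields a continuous map $R^{\mathrm{univ}}_{\bar\sigma,S} \to \mathbb{T}(C^\mathfrak{p})_\mathfrak{m}$, and its image is topologically generated by the characteristic coefficients of $\sigma(\mathrm{Frob}_w)$ for $w\in\mathcal{P}$, which by the prescription are (signed powers of norms times) the images of the $T_w^{(j)}$. These elements topologically generate $\mathbb{T}(C^\mathfrak{p})_\mathfrak{m}$ as an algebra over $\mathbb{Z}_p$ by definition of the abstract Hecke algebra, so the map is surjective and $\mathbb{T}(C^\mathfrak{p})_\mathfrak{m}$ is Noetherian as a quotient of a Noetherian ring. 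This is exactly the argument carried out in the proof of Proposition~5.7 of \cite{Sch18} in the case $n=2$, and it goes through verbatim for general $n$ once Proposition~\ref{assgal} is available.
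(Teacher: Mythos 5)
Your argument is correct and is precisely the one the paper intends: its proof consists of the single sentence ``the same argument as in the proof of Proposition 5.7 of \cite{Sch18}'', and that argument is exactly your combination of gluing the finite-level representations of Proposition \ref{assgal} via Chenevier's determinants, Chebotarev for uniqueness, and a surjection from a Noetherian universal deformation ring (whose compact image is closed and contains the topological generators $T_w^{(j)}$) to get the ring-theoretic claims. One small slip to fix: the finite set $S$ of allowed ramification must be \emph{disjoint} from the infinite set $\mathcal{P}$, not contain it --- take $S$ to be the places dividing the level $C^{\mathfrak{p}}$, the ramification loci of $B$ and $K/F$, and the places above $p$.
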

\begin{proposition}{\label{typic}}
The $\mathbb{T}(C^\mathfrak{p})_{\mathfrak{m}}[\mathrm{Gal}_{K}]$-module $\tilde{H}^{n-1}(C^\mathfrak{p}, \mathbb{Z}_{p})_\mathfrak{m}$ is $\sigma$-typic.
\end{proposition}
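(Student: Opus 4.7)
The plan is to bootstrap Proposition \ref{shintypica} through the two limits defining $\tilde{H}^{n-1}(C^\mathfrak{p},\mathbb{Z}_p)_\mathfrak{m}$, using crucially that $\sigma$ is free of finite rank $n$ as a $\mathbb{T}(C^\mathfrak{p})_\mathfrak{m}$-module.

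First I fix $U$ and $k$, and upgrade the $\sigma$-typicity of $H^{n-1}(\mathrm{Sh}_{UC^\mathfrak{p},\overline{K}}, \mathbb{Z}_p)_\mathfrak{m}$ (viewed as a $\mathbb{T}(C^\mathfrak{p})_\mathfrak{m}[\mathrm{Gal}_K]$-module via the quotient $\mathbb{T}(C^\mathfrak{p})_\mathfrak{m} \twoheadrightarrow \mathbb{T}(UC^\mathfrak{p})_\mathfrak{m}$) to mod-$p^k$ coefficients. The Bockstein sequence attached to $0 \to \mathbb{Z}_p \xrightarrow{p^k} \mathbb{Z}_p \to \mathbb{Z}/p^k\mathbb{Z} \to 0$ yields
\[
0 \to H^{n-1}(\mathrm{Sh}_{UC^\mathfrak{p},\overline{K}}, \mathbb{Z}_p)_\mathfrak{m}/p^k \to H^{n-1}(\mathrm{Sh}_{UC^\mathfrak{p},\overline{K}}, \mathbb{Z}/p^k\mathbb{Z})_\mathfrak{m} \to H^n(\mathrm{Sh}_{UC^\mathfrak{p},\overline{K}}, \mathbb{Z}_p)_\mathfrak{m}[p^k] \to 0,
\]
whose right-hand term vanishes by the middle-degree concentration hypothesis on $\mathfrak{m}$. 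Writing $H^{n-1}(\mathrm{Sh}_{UC^\mathfrak{p},\overline{K}}, \mathbb{Z}_p)_\mathfrak{m} \cong \sigma \otimes_{\mathbb{T}(C^\mathfrak{p})_\mathfrak{m}} M_U$ via Proposition \ref{shintypica} (with trivial Galois action on $M_U$), freeness of $\sigma$ gives
\[
H^{n-1}(\mathrm{Sh}_{UC^\mathfrak{p},\overline{K}}, \mathbb{Z}/p^k\mathbb{Z})_\mathfrak{m} \cong \sigma \otimes_{\mathbb{T}(C^\mathfrak{p})_\mathfrak{m}} (M_U/p^k M_U).
\]

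Next I pass to the direct limit over $U$. Because tensoring with the finite free module $\sigma$ commutes with filtered colimits, and the transition maps on the $M_U$ are well-defined via the uniqueness clause of the $\sigma$-typic decomposition (Proposition 5.4 of \cite{Sch18}), I obtain
\[
\varinjlim_U H^{n-1}(\mathrm{Sh}_{UC^\mathfrak{p},\overline{K}}, \mathbb{Z}/p^k\mathbb{Z})_\mathfrak{m} \cong \sigma \otimes_{\mathbb{T}(C^\mathfrak{p})_\mathfrak{m}} N^{(k)},
\]
where $N^{(k)} := \varinjlim_U M_U/p^k M_U$ carries trivial Galois action. Finally, taking the inverse limit over $k$ also commutes with $\sigma \otimes_{\mathbb{T}(C^\mathfrak{p})_\mathfrak{m}} (-)$, since $\sigma \otimes_{\mathbb{T}(C^\mathfrak{p})_\mathfrak{m}} X \cong X^{\oplus n}$ functorially in $X$ and inverse limits commute with finite direct sums. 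This produces
\[
\tilde{H}^{n-1}(C^\mathfrak{p}, \mathbb{Z}_p)_\mathfrak{m} \cong \sigma \otimes_{\mathbb{T}(C^\mathfrak{p})_\mathfrak{m}} N_0, \qquad N_0 := \varprojlim_k N^{(k)},
\]
with $N_0$ Galois-trivial, exhibiting the desired $\sigma$-typic structure.

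The principal obstacle I anticipate is ensuring that the $\sigma$-typic identifications at each finite level $U$ can be arranged compatibly, so that the $M_U$ genuinely form a direct system under the Hecke-equivariant pullback maps; this should be handled by invoking the naturality/uniqueness built into Proposition 5.4 of \cite{Sch18}, applied to each transition $H^{n-1}(\mathrm{Sh}_{U'C^\mathfrak{p},\overline{K}},\mathbb{Z}_p)_\mathfrak{m} \to H^{n-1}(\mathrm{Sh}_{UC^\mathfrak{p},\overline{K}},\mathbb{Z}_p)_\mathfrak{m}$ for $U' \subseteq U$. The interplay of the direct and inverse limit is otherwise routine given the finite freeness of $\sigma$.
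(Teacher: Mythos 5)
Your argument is correct and is essentially the expanded version of the paper's own (very terse) proof, which likewise deduces the claim from Proposition \ref{shintypica} by observing that the passage to $\mathbb{Z}/p^k$-coefficients, the colimit over $U$, and the limit over $k$ all preserve $\sigma$-typicity, using the compatibility of the $\sigma$'s at the various levels. Your Bockstein/middle-degree-vanishing step, the use of finite freeness of $\sigma$ to commute with the limits, and the appeal to the functoriality of the multiplicity space (Lemma 5.5 / Proposition 5.4 of \cite{Sch18}) are exactly the details the paper leaves implicit.
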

\begin{proof}
This follows from Propostion \ref{shintypica}, noting that the $\sigma$'s are compatible with each other and that all operations in the definition of 
$$\tilde{H}^{n-1}(C^\mathfrak{p}, \mathbb{Z}_{p})_\mathfrak{m}=\underset{k}{\underleftarrow{\text{lim}}}\,\underset{U}{\underrightarrow{\text{lim}}}H^{n-1}(\mathrm{Sh}_{UC^\mathfrak{p},\overline{K}}, \mathbb{Z}/p^k\mathbb{Z})_\mathfrak{m}$$
preserve the property (of modules) of being $\sigma$-typic.
\end{proof}

\end{section}

\begin{section}{$p$-adic uniformization of KHT type Shimura varieties}
Now we describe a $p$-adic (in contrast to classical complex-analytic) uniformization result of the Shimura varieties defined before to compare local and global cohomology groups. We will use the results of Boutot-Zink which introduces a variant of the moduli methods of Rapoport-Zink and which obtains the same uniformization results (up to some Galois twists) with Varshavsky \cite{ Var98I}, \cite{ Var98}. 

We keep the notations from the last section on the global context. Let us consider the following moduli problem, which is a variant of the one in \cite{RZ96}. Recall first that $E$ is the reflex field of the Shimura varieties defined in Section 1 and $v$ is a fixed embedding $v: \overline{\mathbb{Q}} \hookrightarrow \overline{\mathbb{Q}}_p$ which induces a finite place of $E$. For $C$ a compact open subgroup of $\tilde{G}(\mathbb{A}_{F,f})$, let $\mathcal{A}_C$ be the functor on the category of $\mathcal{O}_{E_v}$-schemes, which sends an $\mathcal{O}_{E_v}$-scheme $S$ to the set of isomorphism classes of tuples $(A, \Lambda, \{\lambda_i\}_{i=1}^m, \bar{\eta}^p, \{ \bar{\eta}_{\mathfrak{q}_i}\}_{i=1}^m)$ where

\begin{itemize}
    \item[1.] $A$ is an abelian $\mathcal{O}_{K}$-scheme over $S$ up to isogeny of order prime to $\mathfrak{p}$ together with an action of $\mathcal{O}_{B}$
    $$\iota: \mathcal{O}_{B} \to \mathrm{End}(A);$$
    \item[2.] $\Lambda$ is the one dimensional vector space over $F$ generated by an $F$-homogeneous polarization $\lambda$ of $A$ which is principal in $\mathfrak{p}$;
    \item[3.] for each $i=1, \ldots, m$, $\lambda_i$ is a generator of $\Lambda \otimes_F F_{\mathfrak{p}_i}$ mod ($C_{\mathfrak{p}_i} \cap F_{\mathfrak{p}_i}^\times$);
    \item[4.] $\bar{\eta}^p$ is a class of isomorphisms of $B \otimes \mathbb{A}_f^p$-modules
    $$\bar{\eta}^p: V^p(A) \to W\otimes \mathbb{A}_f^p \,\,\, \mathrm{mod}\, \, C^p$$
    which preserve the Riemannian form on $V^p(A)$ induced by any polarization $\lambda \in \Lambda$ and the pairing $\psi$ on $W\otimes \mathbb{A}_f^p$ up to a constant in $(F \otimes \mathbb{A}_f^p)^\times$;
    \item[5.] for each $i=1, \ldots, m$, $\bar{\eta}_{\mathfrak{q}_i}$ is class of isomorphisms of $B_{\mathfrak{q}_i}$-modules
    $$\bar{\eta}_{\mathfrak{q}_i}: V_{\mathfrak{q}_i}(A) \to W_{\mathfrak{q}_i} \,\,\, \mathrm{mod} \,\, C_{\mathfrak{q}_i}.$$
    
\end{itemize}
such that the following properties are satisfied:

\begin{itemize}
    \item[(a)] The involution $b \to b'$ on $\mathcal{O}_B$ coincides with the one obtained from the Rosati involution on $\mathrm{End}(A)$ induced by $\Lambda$;
    \item[(b)] there is an equality of polynomial functions on $\mathcal{O}_B$, called the determinant condition:
    $$\mathrm{det}_{\mathcal{O}_S}(b, \mathrm{Lie}A)=\mathrm{det}_{\overline{\mathbb{Q}}_p}(b, W_0)$$
\end{itemize}

\begin{remark}
One can formulate condition (b) in terms of the associated $p$-divisible group $X$ of $A$. Indeed, the determinant condition amounts to requiring: (1) $X_\mathfrak{q}$ is a special formal $\mathcal{O}_{B_\mathfrak{q}}$-module à la Drinfeld; (2) $X_{\mathfrak{q}_i}$ is \'etale for each $i=1, \ldots, m$.
\end{remark}

It is proved in \cite{BZ99} that
\begin{proposition}

If $C$ is sufficiently small, then the \'etale sheafification of $\mathcal{A}_C$ is represented by a projective scheme over $\mathcal{O}_{E_v}$, still denoted by $\mathcal{A}_C$. For such varying $C$ these schemes form a projective system $\{\mathcal{A}_C\}$ with finite transition maps.

\end{proposition}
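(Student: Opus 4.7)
The plan is to realize $\mathcal{A}_C$ as a closed subfunctor of a standard PEL-type moduli problem of Kottwitz–Rapoport–Zink, the latter being representable by a quasi-projective scheme over $\mathcal{O}_{E_v}$ for sufficiently small level. One then imposes the local conditions at $p$ to cut out the correct closed subscheme, and proves projectivity by combining properness of the generic fiber (essentially the Shimura variety $\mathrm{Sh}_C$, which is compact because $B$ is a division algebra) with a standard spreading-out of an ample line bundle from the generic fiber.

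Concretely, I would first forget the local data $\bar{\eta}_{\mathfrak{q}_i}$ and weaken condition (b), keeping only $(A,\iota,\Lambda,\bar{\eta}^p)$ subject to (a). For sufficiently small $C^p$ a Mumford-type rigidification argument yields a quasi-projective coarse moduli $\mathcal{A}^{\mathrm{PEL}}$ over $\mathcal{O}_{E_v}$. Next, condition (b) is a closed condition on the associated $p$-divisible group: as observed in the remark following the definition of $\mathcal{A}_C$, it is equivalent to requiring that $X_\mathfrak{q}$ be a special formal $\mathcal{O}_{B_\mathfrak{q}}$-module in the sense of Drinfeld and that each $X_{\mathfrak{q}_i}$ with $i\geq 1$ be étale. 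The level structures $\bar{\eta}_{\mathfrak{q}_i}$ for $i\geq 1$ then add finite étale covers, while $\bar{\eta}_\mathfrak{q}$ is implemented by Drinfeld-type level structures on the special formal $\mathcal{O}_{B_\mathfrak{q}}$-module, which are representable by finite morphisms by the theory developed in \cite{RZ96}. Together these exhibit $\mathcal{A}_C$ as a finite cover of a closed subscheme of $\mathcal{A}^{\mathrm{PEL}}$.

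For projectivity, note that the generic fiber of $\mathcal{A}_C$ recovers (up to $\ker^1$-components) the Shimura variety $\mathrm{Sh}_C$, which is projective over $E_v$ since $\tilde{G}$ is anisotropic modulo center over $\mathbb{Q}$ (as $B$ is a division algebra at $\alpha$ and of invariant $1/n$ at $\mathfrak{q}$). Properness over $\mathcal{O}_{E_v}$ then follows from the valuative criterion via Néron models and Serre–Tate theory: over a discrete valuation ring with residue characteristic $p$, the special/étale conditions on $X$ at $\mathfrak{p}$ force good reduction of $A$, and the whole PEL datum extends uniquely. Ampleness of a suitable power of the Hodge bundle spreads out from the generic fiber, giving projectivity. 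Finally, refining $C$ either refines $C^p$ (finite étale), $C_{\mathfrak{q}_i}$ for $i\geq 1$ (finite étale), or $C_\mathfrak{q}$ via Drinfeld level structures (finite), so the transition maps in the projective system are finite.

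The main obstacle is the delicate treatment of the local data at $\mathfrak{q}$: verifying that the special formal $\mathcal{O}_{B_\mathfrak{q}}$-module condition cuts out a closed subscheme and constructing the moduli of Drinfeld-type level structures on it; together with the fact that the moduli problem is formulated up to prime-to-$\mathfrak{p}$ isogeny rather than on honest abelian schemes, which requires an additional sheafification/descent argument (hence the étale sheafification in the statement). Both points are handled in the Boutot–Zink framework of \cite{BZ99}, which transports the local machinery of \cite{RZ96} to the global setting at hand.
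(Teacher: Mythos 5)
The paper gives no proof of this proposition: it is quoted directly from Boutot--Zink (``It is proved in \cite{BZ99} that\dots''), so there is no internal argument to compare yours against. Your sketch is a reasonable outline of the standard Kottwitz--Rapoport--Zink style construction that \cite{BZ99} carries out, and you correctly identify, and defer to \cite{BZ99}, the two genuinely delicate points: the closedness of the special/\'etale conditions on the $p$-divisible group and the passage from a moduli problem up to prime-to-$\mathfrak{p}$ isogeny to a representable \'etale sheaf. One correction, however: the moduli problem $\mathcal{A}_C$ as formulated carries \emph{no} level structure at $\mathfrak{q}=\mathfrak{q}_0$ --- the data $\bar{\eta}_{\mathfrak{q}_i}$ and $\lambda_i$ are indexed by $i=1,\dots,m$ only --- so your step implementing $\bar{\eta}_{\mathfrak{q}}$ by Drinfeld-type level structures on the special formal $\mathcal{O}_{B_\mathfrak{q}}$-module is not part of this proposition, and would in any case be misleading: since $B_\mathfrak{q}$ is a division algebra of invariant $1/n$, deeper level at $\mathfrak{q}$ enters only on the rigid-analytic generic fiber, through the covers $\mathcal{M}_{\mathrm{Dr},U}$ of the Drinfeld space appearing in the uniformization theorem, not through finite morphisms of integral models. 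With that step removed, the rest of your outline (quasi-projectivity of the rigidified PEL problem for small $C^p$, projectivity using that $B$ is a division algebra, finiteness of the transition maps for shrinking level away from $\mathfrak{q}$) is consistent with what \cite{BZ99} proves.
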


It is natural to equip the system $\{\mathcal{A}_C\}$ with a right action of $G(\mathbb{A}_f)$, for the details one may refer to \cite{BZ99}. Next we consider moduli problems for $p$-divisible formal $\mathcal{O}_{B_\mathfrak{q}}$-modules. We fix a special formal $\mathcal{O}_{B_\mathfrak{q}}$-module $\Phi$ over $\overline{\kappa}$ and denote its dual by $\hat{\Phi}$. We will often write $\mathbb{X}$ for $\Phi \times \hat{\Phi}$.

Let $\mathcal{M}$ be the functor on the category of $\mathcal{O}_{\breve{E}_v}$-schemes where $p$ is locally nilpotent, which assigns such a scheme $T$ the set of isomorphism classes of pairs $(X, \rho)$ where
\begin{itemize}
    \item[(1)] $X$ is a $p$-divisible $\mathcal{O}_{B_\mathfrak{p}}$-module of special type over $T$;
    \item[(2)] $\rho$ is a quasi-isogeny of $\mathcal{O}_{B_\mathfrak{p}}$-modules over $T$
    $$\rho: \mathbb{X} \times_{\mathrm{Spec}\,\kappa} \overline{T} \to X \times_T \overline{T}.$$ 
\end{itemize}
Here $\overline{T} := T \times_{\mathrm{Spec}\,\mathbb{Z}_p} \mathrm{Spec}\,\mathbb{F}_p$ and it is regarded as a scheme over $\mathrm{Spec}\,\overline{\kappa}$ via $\overline{T} \to \mathrm{Spec}\, (\mathcal{O}_{\breve{E}_v}/p) \to \mathrm{Spec}\,\overline{\kappa}$. Note that by rigidity of quasi-isogenies, $\rho$ gives rise to a quasi-isogeny $\beta: \hat{X_2} \to X_1$; we require that Zariski locally on $T$ one can find an element $h \in F_\mathfrak{p}^\times$ such that $h\beta$ is an isomorphism.

\begin{proposition}
$\mathcal{M}$ is representable by a $p$-adic formal scheme over $\mathcal{O}_{\breve{E}_v}$, which is equipped with a Weil descent datum. Furthermore, it acquires natural actions of two groups, the group $J$ of self quasi-isogenies of the $\mathcal{O}_{B_\mathfrak{p}}$-module $\mathbb{X}$ preserving the polarization up to a constant in $F_\mathfrak{p}^\times$, and $\tilde{G}(F_\mathfrak{p})$.
\end{proposition}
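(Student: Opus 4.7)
The plan is to deduce the proposition from the general representability machinery for moduli spaces of $p$-divisible groups with additional structure, in the form developed by Rapoport-Zink \cite{RZ96} and adapted to this variant setup by Boutot-Zink \cite{BZ99}.

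First, I would verify that $\mathcal{M}$ fits into the axiomatic framework of Chapter 3 of \cite{RZ96}: the $\mathcal{O}_{B_\mathfrak{q}}$-action supplies the endomorphism structure, the factor $\hat{\Phi}$ of $\mathbb{X}$ together with the local condition that $h\beta$ be an isomorphism Zariski-locally encodes the polarization datum, and the "special type" requirement on $X$ matches the Hodge cocharacter prescribed by the determinant condition of the previous section. Once this identification is in place, Theorem 3.25 of \emph{loc.~cit.}\ yields that $\mathcal{M}$ is representable by a formal scheme locally formally of finite type over $\mathrm{Spf}\,\mathcal{O}_{\breve{E}_v}$. That $\mathcal{M}$ is actually a $p$-adic formal scheme is automatic from the rigidity of quasi-isogenies: a triple $(X,\rho)$ lifts uniquely along nilpotent thickenings, so that points with values in a $p$-adically complete ring are determined by their reductions modulo $p^k$ for all $k$.

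Second, for the Weil descent datum, I would use that $\mathbb{X}$ is isoclinic over $\overline{\kappa}$ and hence quasi-isogenous to a module already defined over $\kappa$. The Frobenius $\tau$ of $\overline{\kappa}/\kappa$ therefore induces a canonical quasi-isogeny $\mathbb{X}^{(\tau)} \xrightarrow{\sim} \mathbb{X}$; pre-composing the framing $\rho$ with (the $\tau$-twist of) this quasi-isogeny furnishes the desired Weil descent datum, exactly as in 3.48 of \cite{RZ96}.

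Third, for the group actions: $J$ acts tautologically by sending $(X,\rho)$ to $(X,\rho \circ j^{-1})$, the condition that $j$ preserve the polarization up to an element of $F_\mathfrak{p}^\times$ being precisely what ensures the result still lies in $\mathcal{M}$. The action of $\tilde{G}(F_\mathfrak{p})\cong (B_\mathfrak{q}^{op})^\times \times F_\mathfrak{p}^\times$ comes from its natural action on rational Tate modules / rational Dieudonn\'e data of $\mathbb{X}$, with the second factor scaling the polarization class. The main obstacle is essentially bookkeeping: checking that endomorphism structure, polarization, descent datum, and both group actions are simultaneously compatible. This is precisely the content of Chapter 0 of \cite{BZ99}, which I would invoke directly once the translation into its notation is in place.
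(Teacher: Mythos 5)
The paper itself offers no proof of this proposition --- it is imported wholesale from Boutot--Zink \cite{BZ99}, whose argument runs through the Rapoport--Zink formalism essentially as you outline --- so the overall shape of your sketch (fitting $\mathcal{M}$ into the framework of Chapter 3 of \cite{RZ96}, constructing the Weil descent datum from a model of $\mathbb{X}$ over $\kappa$ as in 3.48 of \emph{loc.\ cit.}, and letting $J$ act tautologically by $\rho\mapsto\rho\circ j^{-1}$) is the intended one.

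There is, however, a genuine error in your justification of the adjective ``$p$-adic''. Rigidity of quasi-isogenies says that, for a nilpotent thickening $S_0\hookrightarrow S$ and a chosen lift $X$ of $X_0$, the framing $\rho_0$ lifts uniquely to $X$; it says nothing about uniqueness of the lift $X$ itself, and deformations of $p$-divisible groups along nilpotent thickenings are very far from unique --- that non-uniqueness is exactly what a Rapoport--Zink space parametrizes. If your argument were valid, every such space would be a $p$-adic (indeed formally \'etale) formal scheme over $\mathcal{O}_{\breve{E}_v}$, which already fails for the Lubin--Tate space $\mathrm{Spf}\,\mathcal{O}_{\breve{L}}[[t_1,\dots,t_{n-1}]]$, whose ideal of definition is strictly larger than $(p)$. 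The general representability theorem (Theorem 3.25 of \cite{RZ96}) only produces a formal scheme \emph{locally formally of finite type}; the upgrade to a $p$-adic formal scheme is a nontrivial fact special to the Drinfeld side, namely Drinfeld's theorem identifying the moduli space of special formal $\mathcal{O}_{B_\mathfrak{q}}$-modules with quasi-isogeny with (copies of) $\hat{\Omega}^{n-1}_{\mathcal{O}_{F_\mathfrak{p}}}\hat{\otimes}\,\mathcal{O}_{\breve{E}_v}$, which is locally of finite type over $\mathcal{O}_{\breve{E}_v}$ (cf.\ Theorem 3.72 of \cite{RZ96} and its adaptation in \cite{BZ99}); the \'etale factors $X_{\mathfrak{q}_i}$ and the similitude datum only contribute discrete sets. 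You must invoke this, not rigidity. A secondary point: your description of the $\tilde{G}(F_\mathfrak{p})$-action as ``the natural action on rational Tate modules / rational Dieudonn\'e data of $\mathbb{X}$'' conflates it with the $J$-action --- $J$ is by definition the group acting on the isocrystal of $\mathbb{X}$. The group $\tilde{G}(F_\mathfrak{p})\cong(B_\mathfrak{q}^{op})^\times\times F_\mathfrak{p}^\times$ acts instead through the $\mathcal{O}_{B_\mathfrak{q}}$-module structure on the classified objects and the choice of generator of the polarization class (equivalently, at deeper level structures, as a Hecke action on the tower), and this needs to be set up so as to commute with the $J$-action rather than coincide with it.
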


Now we are ready to state the $p$-adic uniformization result. For this we fix a point $(A_s, \Lambda_s, \{\lambda_{s.i}\}_{i=1}^m, \bar{\eta_s}^p, \{ \bar{\eta}_{s,\mathfrak{q}_i}\}_{i=1}^m) \in \mathcal{A}_C(\bar{\kappa})$ for a sufficiently small $C$. Using a theorem of Serre-Tate, one can define a uniformization morphism
$$\Theta: \mathcal{M} \times \tilde{G}(\mathbb{A}_{F,f}^\mathfrak{p})/C^\mathfrak{p} \to \mathcal{A}_C \times_{\mathrm{Spec}\, \mathcal{O}_{E_v}} \mathrm{Spec}\, \mathcal{O}_{\breve{E}_v}$$
which is $\tilde{G}(\mathbb{A}_{F,f})$-equivariant. It is then natural to try to find out the fibers of this map. Define
$$\tilde{I}^\bullet(F) = \{\phi \in  \mathrm{End}_B^0(A_s) \,| \, \phi\circ\phi' \in F^\times \}$$
where $\phi \mapsto \phi'$ is the Rosati involution induced by $\Lambda$ on the finite dimensional $\mathbb{Q}$-algebra $\mathrm{End}_B^0(A_s)$. (Here $\mathrm{End}_B^0(A_s)$ means $\mathrm{End}_B(A_s) \otimes_\mathbb{Z} \mathbb{Q}$ which is usually called the endomorphism ring up to isogeny.) Then we regard $\tilde{I}^\bullet$ as an algebraic group over $F$ such that its $F$-points are given as above.

Let $G'$ be the inner form of $G$ over $F$ such that $G'(F \otimes_{\mathbb{Q}}\mathbb{R})$ is compact modulo center, $G'(\mathbb{A}_{F,f}^\mathfrak{p}) = \tilde{G}(\mathbb{A}_{F,f}^\mathfrak{p})$, and $G'(F_{\mathfrak{p}}) \cong \mathrm{GL}_n(F_\mathfrak{p}) \times \mathcal{O}_{F_\mathfrak{p}}^\times$. Then $G'$ is the unitary similitude group associated with $(D,\mu)$ where 
$D$ is a division algebra over $K$ of dimension $n^2$ with an involution $\mu$ of the second kind over $F$ satisfying the following conditions:
\begin{itemize}
    \item $D$ splits at $\mathfrak{q}$,
    \item $(D, \mu)$ and $(B, ')$ are locally isomorphic at all finite places except $\mathfrak{q}$,
    \item $\mu$ is positive definite at all archimedean places of $F$.
\end{itemize}

\begin{remark}
The  existence  of  such  $D$  and  $\mu$  follows  from  the  results  of  Kottwitz  and  Clozel (see \cite{Clo91}, Prop. 2.3]).
\end{remark}

\begin{theorem}{$($\textbf{$p$-adic uniformization of Shimura varieties}, $\mathrm{Theorem \,\, 0.16}$, \cite{BZ99}$)$}\label{unifor}\\
For any compact open subgroup $C\subseteq \tilde{G}(\mathbb{A}_{F,f})$, there is an isomorphism of rigid analytic spaces over $Sp\breve{E}_v$:
$$I^\bullet (\mathbb{Q}) \backslash (\mathfrak{X} \times F_\mathfrak{p}^\times) \times \tilde{G}(\mathbb{A}_{F,f}^\mathfrak{p})/C \xrightarrow{\sim} \mathrm{Sh}^{rig}_{(G,\tilde{h}),C} \times_{SpE_v}Sp \breve{E}_v$$
which is $\tilde{G}(\mathbb{A}_{F,f})$-equivariant and compatible with the Weil descent data on both sides.
\end{theorem}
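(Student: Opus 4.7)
The proof follows the Rapoport--Zink framework of non-archimedean uniformization, implemented in the present KHT setting by Boutot--Zink; one would proceed in three steps. The first step is to construct the uniformization morphism $\Theta$ on integral models: given a test scheme $T$ on which $p$ is locally nilpotent and a point $((X,\rho), gC^\mathfrak{p}) \in \mathcal{M}(T) \times \tilde{G}(\mathbb{A}_{F,f}^\mathfrak{p})/C^\mathfrak{p}$, use the quasi-isogeny $\rho$ to transport the $p$-divisible group of $A_s$ to $X$, apply Serre--Tate deformation theory (together with the Serre tensor construction) to produce an abelian scheme $A$ over $T$, and equip $A$ with the polarization, $\mathcal{O}_B$-action, and level structures induced from those on $A_s$, with the prime-to-$p$ level datum twisted by $g$. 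One then verifies that the moduli conditions defining $\mathcal{A}_C$ are satisfied, that $\Theta$ is $\tilde{G}(\mathbb{A}_{F,f})$-equivariant, and that it intertwines the Weil descent data on the two sides.

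The second step is to analyze the fibers of $\Theta$. By rigidity of quasi-isogenies, two source points give isomorphic tuples in $\mathcal{A}_C$ precisely when they differ by the diagonal action of an element of $\tilde{I}^\bullet(\mathbb{Q})$. For surjectivity onto $\overline{\kappa}$-points, one invokes Honda--Tate theory, which shows that any point of the basic Newton stratum is isogenous to $A_s$, combined with the vanishing of certain Galois cohomology (a Hasse principle for the unitary similitude group $\tilde{I}^\bullet$) to produce the required rigidifying data. Paired with the Serre--Tate equivalence on infinitesimal deformations, this yields an isomorphism of formal schemes between $\tilde{I}^\bullet(\mathbb{Q})\backslash(\mathcal{M} \times \tilde{G}(\mathbb{A}_{F,f}^\mathfrak{p})/C^\mathfrak{p})$ and the formal completion of $\mathcal{A}_C \times_{\mathcal{O}_{E_v}} \mathrm{Spec}\, \mathcal{O}_{\breve{E}_v}$ along its basic locus. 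Because $B_\mathfrak{q}$ is a division algebra of invariant $1/n$, the determinant condition forces the Newton polygon at $\mathfrak{q}$ to be basic at every point of the special fiber, so the basic locus exhausts the entire special fiber and the formal completion is the full formal scheme. Passing to rigid-analytic generic fibers and rewriting the quotient in terms of the group $I^\bullet$ appearing in the statement then produces the desired isomorphism.

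The main obstacle is the fiber computation in the second step. One must verify that two source points give isomorphic tuples in $\mathcal{A}_C$ if and only if they differ by a \emph{rational} element of $\tilde{I}^\bullet$, not merely an adelic element; this is a Hasse principle for $\tilde{I}^\bullet$, a unitary similitude group over $F$ of the type studied by Kottwitz, but its verification in the presence of the similitude factor $F_\mathfrak{p}^\times$ and the Weil descent datum requires careful bookkeeping. A secondary difficulty is to identify $I^\bullet$ (the appropriate quotient of $\tilde{I}^\bullet$) with the inner form $G'$ described after the theorem statement, place by place: at $\mathfrak{q}$ and at the archimedean place $\alpha$ one interchanges $B$ and its opposite via the polarization, and this interchange is precisely what produces the local data defining $D$ and its involution $\mu$, so that $I^\bullet$ and $G'$ are isomorphic at every finite place and the rigid-analytic source matches $\mathrm{Sh}^{\mathrm{rig}}_{(G,\tilde h),C}\times_{\mathrm{Sp}\,E_v}\mathrm{Sp}\,\breve{E}_v$ as required.
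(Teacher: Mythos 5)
Your outline follows the same route as the paper, which simply quotes Theorem 0.16 of \cite{BZ99} and sketches its proof in two stages: first the uniformization of the PEL moduli space $\mathcal{A}_C$ by the Rapoport--Zink method (construction of $\Theta$ via Serre--Tate theory, fiber analysis via rigidity of quasi-isogenies, Honda--Tate theory and a Hasse principle for $\tilde{I}^\bullet$, and the observation that the basic locus exhausts the special fiber because $B_\mathfrak{q}$ is a division algebra of invariant $1/n$), and then a comparison with the Shimura variety itself. Your first two steps reproduce the first stage faithfully, and the identification $\tilde{I}^\bullet \cong G'$ in your last paragraph is likewise as in the paper.

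There is, however, a gap between what you prove and what the statement asserts. Your argument uniformizes $\mathcal{A}_C^{rig}\times_{\mathrm{Sp}\,E_v}\mathrm{Sp}\,\breve{E}_v$ --- this is exactly Theorem \ref{pel} of the paper (page 29 of \cite{BZ99}) --- whereas the statement concerns $\mathrm{Sh}^{rig}_{(G,\tilde{h}),C}\times_{\mathrm{Sp}\,E_v}\mathrm{Sp}\,\breve{E}_v$ together with its Weil descent datum. These are not literally the same object: $\mathcal{A}_C$ is not the canonical model of the Shimura variety for the original datum $(G,h)$, and the morphism $\tilde{h}$ appearing in the statement is chosen precisely so that $\mathrm{Sh}_{(G,\tilde{h}),C}$ absorbs the Galois twist that arises when one compares $\mathrm{Sh}_{(G,h),C}$ with its PEL avatar $\mathcal{A}_C$ (Lemma 0.9 of \cite{BZ99}). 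Your proposal never defines or uses $\tilde{h}$, and it folds the passage from $\mathcal{A}_C$ to $\mathrm{Sh}_{(G,\tilde{h}),C}$ into the group-theoretic identification of $I^\bullet$ with $G'$, which is a source-side matter; the missing point is on the target side, namely matching the Weil descent data on the moduli space with those on the canonical model. You should make this comparison step explicit (or at least invoke Lemma 0.9 of \cite{BZ99} for it); without it the claimed compatibility with the Weil descent data, which is essential for the later Galois-equivariance statements, is unproved.
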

We explain a bit on the notations and the proof of this theorem. It is essentially a consequence of the corresponding $p$-adic uniformization of Shimura varieties that are moduli spaces of abelian varieties with PEL structures. Here $\tilde{h}$ is a well chosen morphism $\mathbb{S} \to G_\mathbb{R}$ in the definition of Shimura datum to account for certain Galois twist which occurs when comparing some $\mathrm{Sh}_{(G,h)}$ and its appropriate PEL type avatar. Then one proves the  following uniformization result for PEL type Shimura varieties following the method of Rapoport-Zink, \cite{RZ96}.

\begin{theorem}{$($$\mathrm{Page \,\, 29}$, \cite{BZ99}$)$}\label{pel}
There is an isomorphism of rigid analytic spaces over $\breve{E}_v$:
$$I^\bullet (\mathbb{Q}) \backslash (\mathfrak{X} \times F_\mathfrak{p}^\times) \times \tilde{G}(\mathbb{A}_{F,f}^\mathfrak{p})/C \xrightarrow{\sim} \mathcal{A}_C^{rig} \times_{\mathrm{Sp}\,E_v} \mathrm{Sp}\, \breve{E}_v.$$
\end{theorem}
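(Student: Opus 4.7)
The plan is to follow the standard Rapoport–Zink method as in \cite{RZ96}, adapted to the present moduli problem as in \cite{BZ99}. The main work is to build the uniformization morphism $\Theta$ alluded to in the text, and then prove it is an isomorphism by a surjectivity argument together with an identification of its fibers.

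First, I would fix a geometric base point $y_s = (A_s, \Lambda_s, \{\lambda_{s,i}\}, \bar{\eta}_s^p, \{\bar{\eta}_{s,\mathfrak{q}_i}\}) \in \mathcal{A}_C(\bar{\kappa})$ of the special fiber and decompose its $p$-divisible group $X_{A_s}$ via the idempotents of $\mathcal{O}_K \otimes \mathbb{Z}_p$. The determinant condition forces $X_{A_s,\mathfrak{q}}$ to be a special formal $\mathcal{O}_{B_\mathfrak{q}}$-module isogenous to the fixed $\Phi$ (since $B_\mathfrak{q}$ has invariant $1/n$ such a module is unique up to isogeny), and forces $X_{A_s,\mathfrak{q}_i}$ to be étale for $i \geq 1$. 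Next, Serre–Tate together with the rigidity of étale $p$-divisible groups reduces deformations of $A_s$ to deformations of $X_{A_s,\mathfrak{q}} \times \hat{X}_{A_s,\bar{\mathfrak{q}}}$, which is exactly what $\mathcal{M}$ parametrizes. Combining this with level structures away from $\mathfrak{p}$ parametrized by $\tilde{G}(\mathbb{A}_{F,f}^\mathfrak{p})/C^\mathfrak{p}$ yields the uniformization morphism $\Theta$ into the formal completion of $\mathcal{A}_C \times_{\mathcal{O}_{E_v}} \mathcal{O}_{\breve{E}_v}$ along its special fiber.

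I would then check that $\Theta$ is surjective on geometric points. The essential input is that, because $B_\mathfrak{q}$ is a division algebra of maximal invariant, the special fiber of $\mathcal{A}_C$ consists of a single isogeny class (the Newton stratum at $\mathfrak{q}$ is basic and equal to the whole reduction), so every geometric point of $\mathcal{A}_C$ is quasi-isogenous to $y_s$. The fibers of $\Theta$ are then analyzed: two preimages of a common point differ by a self quasi-isogeny of the isogeny class preserving all structures, and such automorphisms assemble into the algebraic group $\tilde{I}^\bullet$ over $F$ introduced before the statement, which embeds diagonally into $J \times \tilde{G}(\mathbb{A}_{F,f}^\mathfrak{p})$ via its actions on $\mathcal{M}$ and on the tame level part. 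Taking the quotient gives the isomorphism at the level of formal schemes, and then passing to rigid generic fibers and splitting off the similitude factor $F_\mathfrak{p}^\times \subseteq \tilde{G}(F_\mathfrak{p})$ (which acts on $\mathcal{M}$ through scalar quasi-isogenies and whose orbits give the $F_\mathfrak{p}^\times$ factor) produces the stated isomorphism of rigid analytic spaces over $\mathrm{Sp}\,\breve{E}_v$.

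The hard part will be the surjectivity step combined with the fiber identification: this requires a Honda–Tate type classification of isogeny classes in $\mathcal{A}_C(\bar{\kappa})$ and a verification that every local invariant at $p$ is pinned down by the PEL datum. A secondary technical point is keeping track of the Weil descent datum on both sides and carefully matching the $F_\mathfrak{p}^\times$-factor on the left with the generic-fiber structure of $\mathcal{M}$. Once these ingredients are in place, the isomorphism follows formally by comparing the two sides as quotients of the same covering space by the $I^\bullet(\mathbb{Q})$-action.
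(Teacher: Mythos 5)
The paper gives no proof of this statement: it is quoted verbatim from Boutot--Zink (\cite{BZ99}, page 29), with only the remark that it is established there by following the method of Rapoport--Zink \cite{RZ96}. Your sketch is a faithful outline of exactly that method --- the Serre--Tate uniformization morphism $\Theta$, surjectivity from the fact that the whole special fiber of $\mathcal{A}_C$ is a single basic isogeny class because $B_\mathfrak{q}$ is a division algebra of invariant $1/n$, identification of the fibers with orbits of $I^\bullet(\mathbb{Q})$ embedded diagonally, and passage to rigid generic fibers --- so it matches the approach of the cited source.
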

Here and above, $I^\bullet$ denotes $\text{Res}_{F/\mathbb{Q}}\tilde{I}^\bullet$ and $\tilde{I}^\bullet$ is defined above as the algebraic group of certain self quasi-isogenies of an abelian variety over the residue field of $K_\mathfrak{q}$. It is easy to show that $\tilde{I}^\bullet$ is in fact isomorphic to our $G'$. Thus we can write $I^\bullet(\mathbb{Q}) = \tilde{I}^\bullet(F) = G'(F)$. Moreover, $\mathfrak{X}$ denotes the rigid analytic pro-covering space over $\breve{\mathcal{N}}^{rig}$, the rigid analytic space associated with the formal scheme $\breve{\mathcal{N}}$ classifying quasi-isogenies of some fixed special formal $O_{B_\mathfrak{q}}$-module over $\bar{\kappa}$, see e.g. 5.34 of \cite{RZ96} for more details. With Theorem \ref{pel},  the $p$-adic uniformization of Shimura varieties Theorem \ref{unifor} is proved by a comparison between these two types of Shimura varieties, see Lemma 0.9 of \cite{BZ99}.

Recall that we have fixed a compact open subgroup $C^\mathfrak{p}$ of $\tilde{G}(\mathbb{A}_{F,f}^\mathfrak{p})$, that $U$ denotes a compact open subgroup of 
$(B_\mathfrak{q}^{op})^\times$ and that $\mathrm{Sh}_{UC^\mathfrak{p}}$ denotes the Shimura variety associated with the subgroup $U\times \mathcal{O}_{F_\mathfrak{p}}^\times \times C^\mathfrak{p}$. Thus taking $C=U\times \mathcal{O}_{F_\mathfrak{p}}^\times \times C^\mathfrak{p}$ in Theorem \ref{unifor} (then base change to $\mathbb{C}_p$ and take the associated adic spaces with quasi-separated rigid analytic spaces, cf. 1.1.11 of \cite{Hub96}) we obtain:
\begin{theorem}
There is an isomorphism of adic spaces over $\mathbb{C}_p$ 
 $$(\mathrm{Sh}_{UC^\mathfrak{p}}\otimes_E \mathbb{C}_p)^{ad} \cong G'(F)\backslash \mathcal{M}_{\mathrm{Dr},U, \mathbb{C}_p} \times (F_\mathfrak{p}^\times/\mathcal{O}_{F_\mathfrak{p}}^\times) \times G'(\mathbb{A}_{F,f}^{\mathfrak{p}})/C^\mathfrak{p} $$
 compatible with varying $U \subseteq (B_\mathfrak{q}^{op})^\times$ and with the Weil descent datum to $E_v.$ 
\end{theorem}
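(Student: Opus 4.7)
The plan is that this statement is essentially a direct corollary of Theorem \ref{unifor}, obtained by specializing the level subgroup to $C = U \times \mathcal{O}_{F_\mathfrak{p}}^\times \times C^\mathfrak{p}$, base-changing to $\mathbb{C}_p$, and passing from quasi-separated rigid analytic spaces to adic spaces via Huber's functor (1.1.11 of \cite{Hub96}). So the proof consists of three pieces of bookkeeping: identifying the local Drinfeld factor, collapsing the unramified torus factor, and tracking the Weil descent datum.

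First I would apply Theorem \ref{unifor} with the specified $C$ to obtain the rigid-analytic isomorphism
$$G'(F) \backslash (\mathfrak{X} \times F_\mathfrak{p}^\times) \times \tilde{G}(\mathbb{A}_{F,f}^\mathfrak{p}) / C \xrightarrow{\sim} \mathrm{Sh}^{rig}_{(G, \tilde{h}), C} \times_{\mathrm{Sp}\, E_v} \mathrm{Sp}\, \breve{E}_v,$$
using the identification $I^\bullet(\mathbb{Q}) = G'(F)$ already recorded in the text. Since $C$ splits as a product whose factors act on disjoint components, the quotient decomposes accordingly: the $U$-action restricts to the $(B_\mathfrak{q}^{op})^\times$-action on $\mathfrak{X}$, the $\mathcal{O}_{F_\mathfrak{p}}^\times$-action restricts to the natural action on $F_\mathfrak{p}^\times$, yielding the factor $F_\mathfrak{p}^\times / \mathcal{O}_{F_\mathfrak{p}}^\times$, and the $C^\mathfrak{p}$-action is on the prime-to-$\mathfrak{p}$ adelic factor. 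Recognizing the Drinfeld tower, the quotient $\mathfrak{X}/U$ is by definition $\mathcal{M}_{\mathrm{Dr}, U}$ (the level-$U$ covering of $\breve{\mathcal{N}}^{rig}$, cf.\ 5.34 of \cite{RZ96}). Finally, since $G$ and $G'$ are locally isomorphic at all finite places away from $\mathfrak{p}$, we have $\tilde{G}(\mathbb{A}_{F,f}^\mathfrak{p}) = G'(\mathbb{A}_{F,f}^\mathfrak{p})$, and so the right-hand side is exactly the product appearing in the target of the theorem.

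Next I would base change along $\breve{E}_v \hookrightarrow \mathbb{C}_p$ and apply Huber's functor $X \mapsto X^{ad}$ from quasi-separated rigid analytic spaces to adic spaces; this functor commutes with fiber products and with the formation of the above quotients by the discrete group $G'(F)$ and the profinite groups $\mathcal{O}_{F_\mathfrak{p}}^\times$ and $C^\mathfrak{p}$. The subtlety that the Shimura datum in Theorem \ref{unifor} uses $\tilde{h}$ rather than $h$ (which, as noted before Theorem \ref{pel}, corresponds to a Galois twist comparing the PEL avatar with $\mathrm{Sh}_{(G, h)}$) disappears after the base change to $\mathbb{C}_p$, where the Galois twist trivializes; this identifies $\mathrm{Sh}^{rig}_{(G, \tilde{h}), C} \otimes_{\breve{E}_v} \mathbb{C}_p$ with $\mathrm{Sh}_{UC^\mathfrak{p}}^{rig} \otimes_E \mathbb{C}_p$.

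The compatibility with the Weil descent datum to $E_v$ and with varying $U$ are inherited directly: the first from Theorem \ref{unifor} itself (which is stated to be compatible with the Weil descent datum on both sides), and the second from the fact that the transition maps in the tower $\{\mathcal{M}_{\mathrm{Dr}, U}\}_U$ are induced by the $(B_\mathfrak{q}^{op})^\times$-action on the pro-cover $\mathfrak{X}$, which matches the transition maps in $\{\mathrm{Sh}_{UC^\mathfrak{p}}\}_U$ under the moduli-theoretic interpretation used in the construction of $\Theta$. The only point requiring care, and the main obstacle in writing this cleanly, is to verify that Huber's adification and the base change to $\mathbb{C}_p$ both genuinely commute with the formation of the double quotient by the (possibly non-discrete) group actions involved; this is standard in this setting because $\mathfrak{X}$ is the $(B_\mathfrak{q}^{op})^\times$-tower of a quasi-compact formal scheme's generic fiber and $G'(F)$ acts with finite stabilizers on the relevant product once $C^\mathfrak{p}$ is sufficiently small.
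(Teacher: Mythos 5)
Your proposal is correct and follows essentially the same route as the paper, which deduces the statement in one step by taking $C = U \times \mathcal{O}_{F_\mathfrak{p}}^\times \times C^\mathfrak{p}$ in Theorem \ref{unifor}, base changing to $\mathbb{C}_p$, and passing to adic spaces via 1.1.11 of \cite{Hub96}. The extra bookkeeping you supply (decomposing the quotient factor by factor, identifying $\mathfrak{X}/U$ with $\mathcal{M}_{\mathrm{Dr},U}$, and noting the $\tilde{h}$ versus $h$ twist) only makes explicit what the paper leaves implicit.
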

In this theorem, $\mathcal{M}_{\mathrm{Dr},U, \mathbb{C}_p}$ denotes the Drinfeld space of level $U \subseteq (B_\mathfrak{q}^{op})^\times$, which is a smooth adic space over $\text{Spa}(\mathbb{C}_p, \mathcal{O}_{\mathbb{C}_p})$, cf. \cite{SW13}. When $U$ varies, these spaces form the so-called Drinfeld tower $(\mathcal{M}_{\mathrm{Dr},U, \mathbb{C}_p})_U$ with finite \'etale transitive maps and they are related to the above pro-space $\mathfrak{X}$ via
$$(\mathfrak{X} \otimes_{\breve{E}_v}\mathbb{C}_p)^{ad}  \, \cong \, \underset{U}{\underleftarrow{\text{lim}}} \mathcal{M}_{\mathrm{Dr},U, \mathbb{C}_p}.$$

\end{section}

\begin{section}{Review of Scholze's construction}
We recall briefly here the definition and basic properties of the functor defined by Scholze which associates Galois representations with admissible mod $p$ (and $p$-adic) representations of $p$-adic Lie groups. One may refer to \cite{Sch18} for more details.

In this section $L$ is a finite extension of $\mathbb{Q}_p$, and $J$ is an open subgroup of $\mathrm{GL}_n(L)$. Denote by $\breve{L}$ the completion of the maximal unramified extension of $L$ and by $\mathcal{M}_{\mathrm{LT},\infty}$ the perfectoid space over $\breve{L}$ constructed in \cite{SW13}, so that 
$$\mathcal{M}_{\mathrm{LT},\infty} \sim \underset{K}{\underleftarrow{\text{lim}}}\,\mathcal{M}_{\mathrm{LT},J}.$$
Here $\mathcal{M}_{\mathrm{LT},J}$ is the smooth rigid-analytic Lubin-Tate space over $\breve{L}$ at finite level $J$, cf. \cite{GH94}. Write $D$ for the moment to be the central division algebra over $L$ of invariant $1/n$.
Then for $\pi$ an admissible $\mathbb{F}_p$-representation of $\mathrm{GL}_n(L)$, one may construct a sheaf $\mathcal{F}_\pi$ on the site $(\mathbb{P}^{n-1}_{\breve{L}}/D^\times)_{\text{\'et}}$ equivariant for the Weil descent datum, by descending the trivial sheaf $\pi$ along the Gross-Hopkins map
$$ \pi_{\mathrm{GH}}: \mathcal{M}_{\mathrm{LT},\infty} \to \mathbb{P}^{n-1}_{\breve{L}} $$
which can be considered as a $\mathrm{GL}_n(L)$-torsor.
\begin{proposition}

The association mapping a $D^\times$-equivariant \'etale map $U \to  \mathbb{P}^{n-1}_{\breve{L}}$ to the $\mathbb{F}_p$-vector space
$$\mathrm{Map}_{\mathrm{cont},\mathrm{GL}_n(L)\times D^\times}(|U\times_{\mathbb{P}_{\breve{L}}^{n-1}}\mathcal{M}_{\mathrm{LT},\infty}|,\pi)$$
of continuous $\mathrm{GL}_n(L)\times D^\times$-equivariant maps defines a Weil-equivariant sheaf $\mathcal{F}_\pi$ on $(\mathbb{P}^{n-1}_{\breve{L}}/D^\times)_{\text{\'et}}$. The association $\pi \mapsto \mathcal{F}_\pi$ is exact, and all geometric fibers of $\mathcal{F}_\pi$ are isomorphic to $\pi$.

\end{proposition}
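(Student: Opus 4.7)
The plan is to follow Scholze's construction in Section 3 of \cite{Sch18}, verifying the four assertions (sheaf property, Weil-equivariance, exactness, and computation of geometric fibers) in turn. The main conceptual input is that the Gross-Hopkins period map $\pi_{\mathrm{GH}}:\mathcal{M}_{\mathrm{LT},\infty}\to \mathbb{P}^{n-1}_{\breve{L}}$ behaves, at the level of topological spaces, as a pro-étale $\mathrm{GL}_n(L)$-torsor: for any quasi-compact quasi-separated étale map $U\to \mathbb{P}^{n-1}_{\breve{L}}$, one has
\[
\bigl|U\times_{\mathbb{P}^{n-1}_{\breve{L}}}\mathcal{M}_{\mathrm{LT},\infty}\bigr|\;=\;\varprojlim_{J}\bigl|U\times_{\mathbb{P}^{n-1}_{\breve{L}}}\mathcal{M}_{\mathrm{LT},J}\bigr|,
\]
and the transition maps in $J$ are finite étale and $\mathrm{GL}_n(L)/J$-torsors in an appropriate sense. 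This is the content of the perfectoid formalism of \cite{SW13} together with the basic properties of the Lubin--Tate tower, and we shall assume it.

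First, I would verify the sheaf property. By general nonsense the assignment is a presheaf, so one must check descent along an étale cover $\{V_i\to U\}$ of $D^\times$-equivariant objects over $\mathbb{P}^{n-1}_{\breve{L}}$. Using the above description of the underlying topological space and the fact that continuous $\mathrm{GL}_n(L)\times D^\times$-equivariant maps to the discrete admissible representation $\pi$ factor through invariants for some open subgroup $J\times J'\subseteq \mathrm{GL}_n(L)\times D^\times$, the sheaf condition reduces to ordinary étale descent at each finite level $J$, which is automatic. The Weil-equivariance is inherited directly from the natural Weil descent datum on $\mathcal{M}_{\mathrm{LT},\infty}\to \mathrm{Spa}\,\breve{L}$: it acts on the source of the equivariant maps and transports the assignment compatibly with pullback.

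Next, exactness. Left exactness of $\mathrm{Map}_{\mathrm{cont},G}(-,\cdot)$ in the second variable is formal. For surjectivity, given a short exact sequence $0\to\pi_1\to\pi_2\to\pi_3\to 0$ of admissible $\mathbb{F}_p$-representations, I would check surjectivity of $\mathcal{F}_{\pi_2}\to\mathcal{F}_{\pi_3}$ on stalks at geometric points $\bar{x}\to \mathbb{P}^{n-1}_{\breve{L}}$. By the torsor property, the fiber $|\pi_{\mathrm{GH}}^{-1}(\bar{x})|$ is a continuous $\mathrm{GL}_n(L)$-torsor, so equivariant maps from it to $\pi_i$ are naturally identified with $\pi_i$ itself. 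Thus on stalks the map becomes $\pi_2\to\pi_3$, which is surjective by hypothesis; étale locally around $\bar x$ any section of $\mathcal{F}_{\pi_3}$ lifts, giving surjectivity of the sheaf map. The same stalk computation simultaneously proves the last assertion that $\mathcal{F}_{\pi,\bar{x}}\cong \pi$.

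The only non-formal ingredient is the stalk computation, i.e.\ the identification of $|\pi_{\mathrm{GH}}^{-1}(\bar{x})|$ as a $\mathrm{GL}_n(L)$-torsor and the verification that every continuous equivariant map extends to an étale neighborhood. This is precisely where admissibility of $\pi$ is used: a continuous $\mathrm{GL}_n(L)$-equivariant map from the fiber to $\pi$ lands in the $J$-invariants for some compact open $J\subseteq \mathrm{GL}_n(L)$, and $\pi^J$ is finite-dimensional, so sections spread out to $U\times_{\mathbb{P}^{n-1}}\mathcal{M}_{\mathrm{LT},J}$ for some finite étale neighborhood. This is the step I expect to take the most care; everything else is formal manipulation of the presheaf definition. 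Since the statement is a direct transcription of the corresponding result in \cite{Sch18}, I would ultimately simply cite Propositions 3.1--3.2 of \emph{loc.\ cit.}\ rather than reproduce the argument.
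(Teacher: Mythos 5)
Your proposal is correct and matches the paper's treatment: the paper states this proposition without proof, as a recollection of Scholze's result (Proposition 3.1 of \cite{Sch18}), which is precisely what you ultimately propose to do. Your preliminary sketch (torsor property of $\pi_{\mathrm{GH}}$, spreading out via admissibility, stalk computation giving both exactness and the fiber identification) is a faithful outline of Scholze's actual argument, so there is nothing to correct.
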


Let $C/\breve{L}$ be an algebraically closed complete extension with ring of integers $\mathcal{O}_C$. 

\begin{theorem}{$(\mathrm{Scholze})$} 
For any $i\geq 0$, the $D^\times$-representation $H^{i}_{\text{ét}}(\mathbb{P}_{C}^{n-1}, \mathcal{F}_\pi)$ is admissible, independent of $C$, and vanishes for $i > 2(n-1)$. Taking $C=\mathbb{C}_p$, the action of the Weil group $W_L$ on $H^{i}_{\text{ét}}(\mathbb{P}_{\mathbb{C}_p}^{n-1}, \mathcal{F}_\pi)$ extends to an action of the absolute Galois group $G_L$ of $L$. More generally, the same statements hold when $\pi$ is replaced by an admissible $A[\mathrm{GL}_n(L)]$-module with $A$ a complete Noetherian local ring with finite residue field of characteristic $p$. 
\end{theorem}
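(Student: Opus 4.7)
The plan is to compute $H^i_{\text{ét}}(\mathbb{P}^{n-1}_C, \mathcal{F}_\pi)$ via an explicit affinoid cover, exploiting the pro-étale $\mathrm{GL}_n(L)$-torsor structure of the Gross-Hopkins map. I would cover $\mathbb{P}^{n-1}_C$ by the $n$ standard affinoids $U_0,\ldots,U_{n-1}$ and run the Čech-to-cohomology spectral sequence. Two inputs bound the total degree: the Čech term vanishes for $p\geq n$ since the cover has $n$ elements, while a Huber-style affinoid vanishing (together with the $\mathbb{F}_p$-linear nature of $\mathcal{F}_\pi$) forces $H^q_{\text{ét}}(U_{i_0}\cap\cdots\cap U_{i_p},\mathcal{F}_\pi)=0$ for $q>n-1$, the intersections being $(n-1)$-dimensional. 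Combining these gives $H^i_{\text{ét}}(\mathbb{P}^{n-1}_C,\mathcal{F}_\pi)=0$ for $i>2(n-1)$.

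For admissibility I would analyze the sections over each multi-intersection $U_{\underline i}:=U_{i_0}\cap\cdots\cap U_{i_p}$. Its preimage $\pi_{\mathrm{GH}}^{-1}(U_{\underline i})\subset\mathcal{M}_{\mathrm{LT},\infty}$ is a pro-étale $\mathrm{GL}_n(L)$-torsor, and by definition
$$\Gamma(U_{\underline i},\mathcal{F}_\pi)=\mathrm{Map}_{\mathrm{cont},\mathrm{GL}_n(L)\times D^\times}\bigl(|\pi_{\mathrm{GH}}^{-1}(U_{\underline i})|,\pi\bigr).$$
Invoking the isomorphism between the Lubin-Tate and Drinfeld towers at infinite level, which swaps the roles of $\mathrm{GL}_n(L)$ and $D^\times$ and turns $\pi_{\mathrm{GH}}$ into the pullback of a $D^\times$-torsor over Drinfeld's symmetric space, one sees that for each compact open $K\subseteq D^\times$ the $K$-invariants of $\Gamma(U_{\underline i},\mathcal{F}_\pi)$ are identified with a subspace of $\pi^{K'}$ for a corresponding open $K'\subseteq\mathrm{GL}_n(L)$, hence are finite-dimensional by admissibility of $\pi$. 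This gives admissibility of each Čech term as a $D^\times$-representation, and since admissible representations form an abelian category closed under extensions, admissibility propagates through the spectral sequence to every $H^i$.

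Independence of the algebraically closed base $C$ then reduces, via the same Čech decomposition, to the statement that the underlying topological space of a perfectoid affinoid is invariant under extension of the complete algebraically closed base, so continuous equivariant maps into the discrete module $\pi$ do not see the change of $C$. Finally, the Weil descent datum on $\mathcal{F}_\pi$ produces a continuous $W_L$-action on each $H^i$; after dualizing one presents the admissible $D^\times$-module as a finitely generated module over the Iwasawa algebra of a small $K\subseteq D^\times$, on which a continuous $W_L$-action extends uniquely to $G_L$ by density of $W_L$ in $G_L$. The generalization to admissible $A[\mathrm{GL}_n(L)]$-modules with $A$ complete Noetherian local follows by passage to the limit along the finite-length quotients $A/\mathfrak{m}_A^k$. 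The main obstacle I expect is the admissibility step: identifying $K$-invariants of affinoid sections with finite-dimensional subspaces of $\pi$ genuinely requires the Lubin-Tate--Drinfeld duality at infinite level and a careful analysis of the $\mathrm{GL}_n(L)\times D^\times$-orbits on $|\pi_{\mathrm{GH}}^{-1}(U_{\underline i})|$; once admissibility is in hand, the remaining assertions are essentially formal.
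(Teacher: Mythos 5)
This theorem is not proved in the paper at all: Section 4 is a review, and the statement is quoted from \cite{Sch18} (Theorem 1.1 and its refinement in Section 3 of that paper). So your proposal has to be measured against Scholze's original argument, and it diverges from it at the two points that carry all the weight; both of your replacements fail.

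The central gap is the admissibility step. You claim that for an affinoid intersection $U_{\underline i}$ the $K$-invariants of $\Gamma(U_{\underline i},\mathcal F_\pi)$, $K\subseteq D^\times$ compact open, embed into $\pi^{K'}$ for some compact open $K'\subseteq \mathrm{GL}_n(L)$ via Lubin--Tate/Drinfeld duality. There is no such identification: the duality exchanges the Grothendieck--Messing and Hodge--Tate period maps, which have different targets, and it does not convert $\pi_{\mathrm{GH}}^{-1}(U_{\underline i})$ into anything compact modulo $\mathrm{GL}_n(L)$ --- the quotient $|\pi_{\mathrm{GH}}^{-1}(U_{\underline i})|/\mathrm{GL}_n(L)$ is $|U_{\underline i}|$ itself, not a profinite set, so $\Gamma(U_{\underline i},\mathcal F_\pi)$ is a space of locally constant $\pi$-valued sections over an affinoid and is not admissible over $D^\times$; the higher local cohomology groups (e.g.\ $H^1$ of a disc with $\mathbb F_p$-coefficients) are already infinite-dimensional even for $\pi$ trivial. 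Admissibility of $H^i_{\text{\'et}}(\mathbb P^{n-1}_C,\mathcal F_\pi)$ is a \emph{global} finiteness statement that genuinely needs properness: Scholze proves it by establishing a primitive comparison theorem, namely that $H^i_{\text{\'et}}(\mathbb P^{n-1}_C/K,\mathcal F_\pi)\otimes_{\mathbb F_p}\mathcal O_C/p$ is almost isomorphic to $H^i_{\text{\'et}}(\mathbb P^{n-1}_C/K,\mathcal F_\pi\otimes\mathcal O_X^+/p)$, and then controlling the latter by almost finitely generated coherent cohomology on a proper formal model; admissibility of $\pi$ enters through the pro-\'etale local description of $\mathcal F_\pi\otimes\mathcal O_X^+/p$, not through sections over affinoids. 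No cover-by-affinoids argument can see this finiteness.

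The same problem infects your degree bound and your independence of $C$. ``Huber-style affinoid vanishing in degrees above the dimension'' is a statement about torsion prime to the residue characteristic; for $p$-torsion sheaves on affinoids over $\mathbb C_p$ it was not available (the recent $p$-torsion Artin vanishing of Bhatt--Hansen concerns Zariski-constructible sheaves, which $\mathcal F_\pi$ is not), and Scholze instead extracts the bound $2(n-1)$ from the $\mathcal O_X^+/p$-side of the comparison, where the two contributions of $n-1$ come from a \v{C}ech cover by affinoids together with the almost vanishing of $\mathcal O^+/p$-cohomology of affinoid perfectoids. Likewise, the underlying topological space of a perfectoid affinoid is certainly not invariant under enlarging $C$; the independence of $C$ is again the invariance of \'etale cohomology of \emph{proper} adic spaces under extension of algebraically closed base fields, proved through the same comparison. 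The parts of your outline that do match the actual proof are peripheral: the d\'evissage from a complete Noetherian local $A$ to its finite-length quotients and then to $\mathbb F_p$, and the extension of the $W_L$-action to $G_L$, which works because $W_L$ acts continuously on the finite-dimensional $K$-invariants, hence through a finite quotient, and $W_L$ is dense in $G_L$.
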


\end{section}

\begin{section}{The case of $\mathrm{GL}_n$}
Now let us return to our case of $\mathrm{GL}_n, n > 2$.

\begin{definition}
Let $\rho_{C^{\mathfrak{p}}}^i$ be the admissible $\mathbb{Z}_p$-representation of $G_{K}\times (B_\mathfrak{q}^{op})^\times$ given by
$$\rho_{C^{\mathfrak{p}}}^i:=\underset{U}{\underrightarrow{\text{lim}}} \, H^i(\mathrm{Sh}_{UC^{\mathfrak{p}},\overline{K}}, \mathbb{Q}_p/\mathbb{Z}_p)$$
where $U$ runs over compact open subgroups of $(B_\mathfrak{q}^{op})^\times$. We will often write $\rho$ for $\rho_{C^{\mathfrak{p}}}^{n-1}$.
Let $\pi=\pi_{C^{\mathfrak{p}}}$ be the admissible $\mathbb{Z}_p$-representation of $\mathrm{GL}_n(F_{\mathfrak{p}})$ given by the space of continuous functions 
$$\pi=\pi_{C^{\mathfrak{p}}}:=C^0(G'(F)\backslash G'(\mathbb{A}_{F,f})/(\mathcal{O}^\times_{F_\mathfrak{p}}  \times C^{\mathfrak{p}} ), \mathbb{Q}_p/\mathbb{Z}_p).$$
\end{definition} 
Define $$\mathrm{Sh}_{C^{\mathfrak{p}}}:=G'(F)\backslash \mathcal{M}_{\mathrm{Dr},\infty, \mathbb{C}_p} \times G'(\mathbb{A}_{F,f}^{\mathfrak{p}})/C^{\mathfrak{p}}.$$ 
It is a perfectoid space over $\mathbb{C}_p$ equipped with a Weil descent datum to $E_v$, such that we have the following similarity relation between adic spaces (see Def. 2.4.1 of \cite{SW13})
$$\mathrm{Sh}_{C^{\mathfrak{p}}} \sim \underset{U}{\underleftarrow{\text{lim}}}(\mathrm{Sh}_{UC^{\mathfrak{p}}}\otimes_E \mathbb{C}_p)^{ad}.$$
 
In particular, we have

\begin{equation}\label{infcoho}
  H^i(\mathrm{Sh}_{C^{\mathfrak{p}}, \mathbb{C}_p}, \mathbb{Q}_p/\mathbb{Z}_p) = \underset{U}{\underrightarrow{\text{lim}}}\,H^i(\mathrm{Sh}_{UC^{\mathfrak{p}}, \mathbb{C}_p,},\mathbb{Q}_p/\mathbb{Z}_p) 
\end{equation}
as $(W_{F_{\mathfrak{p}}}\times B_{\mathfrak{q}}^\times)$-representations where $W_{F_{\mathfrak{p}}}$ is the Weil group (here we drop the notation ``op'' from $(B_{\mathfrak{q}}^{op})^\times$ so that it acts from the right on $\rho_{C^{\mathfrak{p}}}^i=\underset{U}{\underrightarrow{\text{lim}}}\,H^i(\mathrm{Sh}_{UC^{\mathfrak{p}}, \mathbb{C}_p,},\mathbb{Q}_p/\mathbb{Z}_p)$).

\begin{remark}
It follows from the proper base change theorem (e.g., \href{https://stacks.math.columbia.edu/tag/0A5I}{Tag 0A5I} of \cite{Sta}) that there is a canonical isomorphism
$$H^i(\mathrm{Sh}_{UC^{\mathfrak{p}}, \mathbb{C}_p,},\mathbb{Q}_p/\mathbb{Z}_p) \cong H^i(\mathrm{Sh}_{UC^{\mathfrak{p}}, \overline{K}},\mathbb{Q}_p/\mathbb{Z}_p),$$
which we will use tacitly throughout.
\end{remark}

Then we can proceed as Scholze did in \cite{Sch18} to obtain first a weak form of $p$-adic local-global compatibility. By Proposition  7.1.1 of \cite{SW13} we have the Hodge-Tate period morphism
$$\pi_{\mathrm{HT}}: \mathcal{M}_{\mathrm{Dr},\infty,\mathbb{C}_{p}} \to \mathbb{P}_{\mathbb{C}_p}^{n-1}$$ compatible with Weil descent data, and it is identified with the Grothendieck-Messing period map under the duality isomorphism
$$\mathcal{M}_{\mathrm{Dr},\infty,\mathbb{C}_{p}}\cong \mathcal{M}_{\mathrm{LT},\infty,\mathbb{C}_{p}}\,\, ,$$
cf. Theorem 7.2.3 of \cite{SW13}. Now the $\mathrm{GL}_n(F_{\mathfrak{p}})$-equivariance of the Hodge-Tate period map induces a map
$$\pi_{\mathrm{HT}}^{\mathrm{Sh}}: \mathrm{Sh}_{C^{\mathfrak{p}}, \mathbb{C}_p} = G'(F)\backslash \mathcal{M}_{\mathrm{Dr},\infty, \mathbb{C}_p} \times G'(\mathbb{A}_{F,f}^{\mathfrak{p}})/C^{\mathfrak{p}} \to \mathbb{P}_{\mathbb{C}_p}^{n-1}.$$
Note that $\pi_{\mathrm{HT}}$ is $(W_{F_{\mathfrak{p}}}\times B_\mathfrak{q}^\times)$-equivariant.

\begin{proposition}\label{sheafiso}
There is a $(W_{F_{\mathfrak{p}}}\times B_\mathfrak{q}^\times)$-equivariant isomorphism of sheaves on the étale site of $($the adic space$)$ $\mathbb{P}_{\mathbb{C}_p}^{n-1}:$
$$R\pi_{\mathrm{HT}\text{\'et}*}^{\mathrm{Sh}}(\mathbb{Q}_p/\mathbb{Z}_p) \cong \mathcal{F}_{\pi_{C^\mathfrak{p}}}.$$
\end{proposition}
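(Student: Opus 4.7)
The plan is to compute the sections of $R\pi_{\mathrm{HT}\text{\'et}*}^{\mathrm{Sh}}(\mathbb{Q}_p/\mathbb{Z}_p)$ on a $D^\times$-equivariant quasi-compact \'etale $V \to \mathbb{P}^{n-1}_{\mathbb{C}_p}$ and match them with the explicit description
$$\mathcal{F}_{\pi_{C^\mathfrak{p}}}(V) = \mathrm{Map}_{\mathrm{cont},\,\mathrm{GL}_n(F_\mathfrak{p}) \times B_\mathfrak{q}^\times}\bigl(|V \times_{\mathbb{P}^{n-1}} \mathcal{M}_{\mathrm{LT},\infty,\mathbb{C}_p}|,\,\pi_{C^\mathfrak{p}}\bigr)$$
coming from Scholze's construction. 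This is essentially the higher-dimensional analog of Theorem 6.2 of \cite{Sch18}, and the key additional ingredient is the uniformization of Theorem \ref{unifor}.

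First, the uniformization combined with the duality $\mathcal{M}_{\mathrm{Dr},\infty,\mathbb{C}_p} \cong \mathcal{M}_{\mathrm{LT},\infty,\mathbb{C}_p}$ (under which $\pi_{\mathrm{HT}}$ on the Drinfeld side corresponds to $\pi_{\mathrm{GH}}$ on the Lubin-Tate side) yields
$$V \times_{\mathbb{P}^{n-1}} \mathrm{Sh}_{C^\mathfrak{p},\mathbb{C}_p} \cong G'(F) \backslash \bigl(V \times_{\mathbb{P}^{n-1}} \mathcal{M}_{\mathrm{LT},\infty,\mathbb{C}_p}\bigr) \times \bigl(F_\mathfrak{p}^\times/\mathcal{O}_{F_\mathfrak{p}}^\times\bigr) \times G'(\mathbb{A}_{F,f}^\mathfrak{p})/C^\mathfrak{p}.$$
As a sanity check at the level of stalks, the geometric fiber of $\pi_{\mathrm{HT}}^{\mathrm{Sh}}$ over $x \in \mathbb{P}^{n-1}_{\mathbb{C}_p}$ becomes the discrete double quotient $G'(F) \backslash G'(\mathbb{A}_{F,f})/(\mathcal{O}_{F_\mathfrak{p}}^\times \times C^\mathfrak{p})$, whose continuous $\mathbb{Q}_p/\mathbb{Z}_p$-valued functions are precisely $\pi_{C^\mathfrak{p}}$, agreeing with the stalk of $\mathcal{F}_{\pi_{C^\mathfrak{p}}}$ at $x$.

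To promote this pointwise match to an isomorphism of derived pushforwards, I would follow the strategy of Corollary 4.7 and Theorem 6.2 of \cite{Sch18}. At the perfectoid infinite level the geometric fibers of $\pi_{\mathrm{HT}}$ are profinite sets, so $R^i \pi_{\mathrm{HT}\text{\'et}*}(\mathbb{Q}_p/\mathbb{Z}_p)$ vanishes for $i \geq 1$ on the Drinfeld tower and the zeroth direct image is computed by sheaves of continuous functions on these fibers. Taking the further quotient by the discrete, properly discontinuous (free once $C^\mathfrak{p}$ is sufficiently small) action of $G'(F)$ preserves concentration in degree zero. Sections over $V$ are then $G'(F)$-invariant continuous maps from $|V \times_{\mathbb{P}^{n-1}} \mathcal{M}_{\mathrm{LT},\infty,\mathbb{C}_p}| \times F_\mathfrak{p}^\times/\mathcal{O}_{F_\mathfrak{p}}^\times \times G'(\mathbb{A}_{F,f}^\mathfrak{p})/C^\mathfrak{p}$ to $\mathbb{Q}_p/\mathbb{Z}_p$, and using $G'(\mathbb{A}_{F,f}) = \mathrm{GL}_n(F_\mathfrak{p}) \times F_\mathfrak{p}^\times \times G'(\mathbb{A}_{F,f}^\mathfrak{p})$ to transfer the auxiliary variables from the source into the target, they become $\mathrm{GL}_n(F_\mathfrak{p}) \times B_\mathfrak{q}^\times$-equivariant continuous maps into $\pi_{C^\mathfrak{p}}$, which is exactly $\mathcal{F}_{\pi_{C^\mathfrak{p}}}(V)$. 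Weil-equivariance is inherited from that of $\pi_{\mathrm{HT}}$ and from the compatibility of the uniformization with the Weil descent data on both sides.

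The main obstacle is establishing the vanishing of the higher direct images $R^i \pi_{\mathrm{HT}\text{\'et}*}^{\mathrm{Sh}}(\mathbb{Q}_p/\mathbb{Z}_p)$ for $i \geq 1$: although $\pi_{\mathrm{HT}}$ fails to be \'etale at any finite level, the profiniteness of its geometric fibers at the perfectoid infinite level forces \'etale cohomology with $p$-torsion coefficients to concentrate in degree $0$. This input, which for $\mathrm{GL}_n$ relies on the perfectoid infinite-level Lubin-Tate/Drinfeld spaces of \cite{SW13}, must be imported from Scholze's local analysis; once in hand, the global comparison follows formally from the uniformization together with the discreteness of $G'(F)$.
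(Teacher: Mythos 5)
Your proposal is correct and follows essentially the same route as the paper: both reduce the vanishing of $R^i\pi_{\mathrm{HT}\text{\'et}*}^{\mathrm{Sh}}$ for $i\geq 1$ to the fact that the geometric fibers of $\pi_{\mathrm{HT}}^{\mathrm{Sh}}$ at infinite level are profinitely many copies of a geometric point (computed via the uniformization and the Lubin--Tate/Drinfeld duality), and both then identify $H^0$ of the fiber product over an \'etale $U\to\mathbb{P}^{n-1}_{\mathbb{C}_p}$ with continuous $\mathrm{GL}_n(F_\mathfrak{p})$-equivariant maps into $\pi_{C^\mathfrak{p}}$, exactly as in Theorem 6.2 of \cite{Sch18}. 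The only cosmetic difference is that the paper constructs the comparison map on sections explicitly and then checks it is an isomorphism on stalks, whereas you assert the section-level identification directly; the underlying computation is the same.
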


\begin{proof}
The proof is the same as in \cite{Sch18}, but we write it again here. We first check that the higher direct images vanish. It suffices to check it at stalks, so let $\bar{x} = \mathrm{Spa}(C, C^+) \to \mathbb{P}_{\mathbb{C}_p}^{n-1}$ be a geometric point, that is, $C/\breve{E}_v$ is complete algebraically closed and $C^+\subseteq C$ is an open and bounded valuation subring. We may
assume that $C$ is the completion of the algebraic closure of the residue field of $\mathbb{P}_{\mathbb{C}_p}^{n-1}$ at the image of $\bar{x}$. Let $$\bar{x} \to U_i \to \mathbb{P}_{\mathbb{C}_p}^{n-1}$$ be a cofinal system of \'etale neighborhoods of $\bar{x}$; then we have $\bar{x} \sim \underset{i}{\underleftarrow{\text{lim}}}U_i$.
Let $$U_i^{\mathrm{Sh}} \to \mathrm{Sh}_{C^\mathfrak{p},\mathbb{C}_p}$$
be the pullback of $U_i$, so that $U_i^{\mathrm{Sh}}$ is a perfectoid space \'etale over $\mathrm{Sh}_{C^\mathfrak{p},\mathbb{C}_p}$. One can form the inverse limit $U_x^{\mathrm{Sh}} = \underset{i}{\underleftarrow{\text{lim}}}U_i^{\mathrm{Sh}}$ in the category of perfectoid spaces over $\mathbb{C}_p$. We have equalities 
$$(R^j\pi_{\mathrm{HT}\text{\'et}*}^{\mathrm{Sh}}(\mathbb{Q}_p/\mathbb{Z}_p))_{\bar{x}} = \underset{i}{\underrightarrow{\text{lim}}}H^j_{\text{\'et}}(U_i^{\mathrm{Sh}},\mathbb{Q}_p/\mathbb{Z}_p) = H^j_{\text{\'et}}(U_{\bar{x}}^{\mathrm{Sh}},\mathbb{Q}_p/\mathbb{Z}_p).$$
On the other hand, the fiber $U_{\bar{x}}^{\mathrm{Sh}}$ is given by profinitely many copies of $\bar{x}$,
$$U_{\bar{x}}^{\mathrm{Sh}} = \mathrm{Spa}(C^0(G'(F)\backslash \mathrm{GL}_n(F_\mathfrak{p})\times \tilde{G}(\mathbb{A}_{F,f}^\mathfrak{p})/C^\mathfrak{p},C),C^0(G'(F)\backslash \mathrm{GL}_n(F_\mathfrak{p})\times \tilde{G}(\mathbb{A}_{F,f}^\mathfrak{p})/C^\mathfrak{p},C^+)).$$
This implies that $H^j_{\text{\'et}}(U_{\bar{x}}^{\mathrm{Sh}},\mathbb{Q}_p/\mathbb{Z}_p)$ vanishes for $j > 0$, and equals 
$$C^0(G'(F)\backslash \mathrm{GL}_n(F_\mathfrak{p})\times \tilde{G}(\mathbb{A}_{F,f}^\mathfrak{p})/C^\mathfrak{p},\mathbb{Q}_p/\mathbb{Z}_p)$$
in degree $0$.

It remains to identify $\pi_{\mathrm{HT}\text{\'et}*}^{\mathrm{Sh}}(\mathbb{Q}_p/\mathbb{Z}_p)$. The previous computation shows that the fibers are isomorphic to $\pi_{C^\mathfrak{p}}$. Let $U \to \mathbb{P}_{\mathbb{C}_p}^{n-1}$ be an \'etale map. We need to construct a map 
\begin{equation*}
\begin{aligned}
&H^0(U\times_{\mathbb{P}_{\mathbb{C}_p}^{n-1}}(G'(F)\backslash \mathcal{M}_{\mathrm{Dr},\infty, \mathbb{C}_p} \times G'(\mathbb{A}_{F^,f}^{\mathfrak{p}})/C^\mathfrak{p}), \mathbb{Q}_p/\mathbb{Z}_p) \\
&\to \mathrm{Map}_{\mathrm{Cont},\text{ GL}_n(F_\mathfrak{p})}(|U\times_{\mathbb{P}_{\mathbb{C}_p}^{n-1}}\mathcal{M}_{\mathrm{Dr},\infty, \mathbb{C}_p}|,C^0(G'(F)\backslash \mathrm{GL}_n(F_\mathfrak{p})\times G'(\mathbb{A}_{F,f}^\mathfrak{p})/C^\mathfrak{p},\mathbb{Q}_p/\mathbb{Z}_p)).
\end{aligned}
\end{equation*}
The left hand side is equal to
$$C^0(|U\times_{\mathbb{P}_{\mathbb{C}_p}^{n-1}}(G'(F)\backslash \mathcal{M}_{\mathrm{Dr},\infty, \mathbb{C}_p} \times G'(\mathbb{A}_{F^,f}^{\mathfrak{p}})/C^\mathfrak{p})|, \mathbb{Q}_p/\mathbb{Z}_p),$$
and it remains to observe that
there is a natural $ \mathrm{GL}_n(F_\mathfrak{p})$-equivariant map
\begin{equation*}
\begin{aligned}
(U\times_{\mathbb{P}_{\mathbb{C}_p}^{n-1}}\mathcal{M}_{\mathrm{Dr},\infty, \mathbb{C}_p}) \times G'(F)\backslash \mathrm{GL}_n(F_\mathfrak{p})\times G'(\mathbb{A}_{F,f}^\mathfrak{p})/C^\mathfrak{p}) \\
 \to U\times_{\mathbb{P}_{\mathbb{C}_p}^{n-1}}(G'(F)\backslash \mathcal{M}_{\mathrm{Dr},\infty, \mathbb{C}_p} \times G'(\mathbb{A}_{F^,f}^{\mathfrak{p}})/C^\mathfrak{p}).
\end{aligned}
\end{equation*}
\end{proof}

It follows from Proposition \ref{sheafiso} that $$H^i(\mathrm{Sh}_{C^{\mathfrak{p}}, \mathbb{C}_p}, \mathbb{Q}_p/\mathbb{Z}_p) = H^i_{\text{ét}}(\mathbb{P}_{\mathbb{C}_p}^{n-1}, \mathcal{F}_{\pi_{C^{\mathfrak{p}}}}).$$
Combining this with formula \eqref{infcoho} we have proved
\begin{theorem}\label{cmp}
There is a natural isomorphism of $(\mathrm{Gal}_{F_{\mathfrak{p}}}\times B_\mathfrak{q}^\times)$-representations over $\mathbb{Z}_{p}$
$$H^i_{\text{ét}}(\mathbb{P}_{\mathbb{C}_p}^{n-1}, \mathcal{F}_{\pi_{C^{\mathfrak{p}}}}) \cong \rho_{C^{\mathfrak{p}}}^i.$$
It is clearly $\mathbb{T}$-equivariant.
\end{theorem}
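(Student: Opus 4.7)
The plan is to combine the two inputs already assembled: Proposition \ref{sheafiso}, which identifies the pushforward of $\mathbb{Q}_p/\mathbb{Z}_p$ under $\pi_{\mathrm{HT}}^{\mathrm{Sh}}$ with Scholze's sheaf $\mathcal{F}_{\pi_{C^\mathfrak{p}}}$, and the similarity relation $\mathrm{Sh}_{C^\mathfrak{p}} \sim \varprojlim_U (\mathrm{Sh}_{UC^\mathfrak{p}}\otimes_E\mathbb{C}_p)^{ad}$ coming from $p$-adic uniformization. The first step is to take Leray for the map $\pi_{\mathrm{HT}}^{\mathrm{Sh}}$. Since Proposition \ref{sheafiso} asserts $R^j\pi_{\mathrm{HT}\text{ét}*}^{\mathrm{Sh}}(\mathbb{Q}_p/\mathbb{Z}_p) = 0$ for $j>0$ and $\pi_{\mathrm{HT}\text{ét}*}^{\mathrm{Sh}}(\mathbb{Q}_p/\mathbb{Z}_p) \cong \mathcal{F}_{\pi_{C^\mathfrak{p}}}$, the Leray spectral sequence degenerates and yields
$$H^i_{\text{ét}}(\mathrm{Sh}_{C^\mathfrak{p},\mathbb{C}_p}, \mathbb{Q}_p/\mathbb{Z}_p) \cong H^i_{\text{ét}}(\mathbb{P}_{\mathbb{C}_p}^{n-1}, \mathcal{F}_{\pi_{C^\mathfrak{p}}}),$$
naturally in the actions of $\mathrm{Gal}_{F_\mathfrak{p}}$ (via the Weil descent datum, extended to the full Galois action on the right as in Scholze's construction) and of $B_\mathfrak{q}^\times$ (which acts on the tower through its action on $\mathcal{M}_{\mathrm{Dr},\infty,\mathbb{C}_p}$ and commutes with $\pi_{\mathrm{HT}}^{\mathrm{Sh}}$).

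Next I would invoke the continuity of étale cohomology with torsion coefficients along perfectoid inverse limits: by the similarity relation displayed just before \eqref{infcoho} and the standard fact that $H^i_{\text{ét}}(-,\mathbb{Z}/p^k)$ commutes with such $\sim$-limits of qcqs adic spaces, one obtains
$$H^i_{\text{ét}}(\mathrm{Sh}_{C^\mathfrak{p},\mathbb{C}_p}, \mathbb{Q}_p/\mathbb{Z}_p) \;=\; \varinjlim_U H^i_{\text{ét}}(\mathrm{Sh}_{UC^\mathfrak{p},\mathbb{C}_p}^{ad}, \mathbb{Q}_p/\mathbb{Z}_p),$$
which is precisely \eqref{infcoho}. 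Combining with the right-hand side of \eqref{infcoho}, and noting that the proper base change/comparison remark identifies this in turn with $\varinjlim_U H^i(\mathrm{Sh}_{UC^\mathfrak{p},\overline{K}}, \mathbb{Q}_p/\mathbb{Z}_p) = \rho^i_{C^\mathfrak{p}}$, chaining the two isomorphisms produces the desired identification.

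It remains to check equivariance. The $B_\mathfrak{q}^\times$-action on both sides comes from its action on the Drinfeld tower (on the left) and on the Lubin-Tate tower (on the right), matched through the duality $\mathcal{M}_{\mathrm{Dr},\infty,\mathbb{C}_p} \cong \mathcal{M}_{\mathrm{LT},\infty,\mathbb{C}_p}$ and the identification of $\pi_{\mathrm{HT}}$ with $\pi_{\mathrm{GH}}$ under duality, so equivariance is built in. The $\mathrm{Gal}_{F_\mathfrak{p}}$-equivariance comes from the compatibility of $\pi_{\mathrm{HT}}^{\mathrm{Sh}}$ with Weil descent data (upgraded to a Galois action on both sides as in Scholze's theorem). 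For $\mathbb{T}$-equivariance, Hecke operators at places $w \in \mathcal{P}$ act on both sides through correspondences on the tame level $C^\mathfrak{p}$; since all constructions (the uniformization isomorphism, $\pi_{\mathrm{HT}}^{\mathrm{Sh}}$, the sheaf $\mathcal{F}_{\pi_{C^\mathfrak{p}}}$, and the transition maps in $U$) are natural in $C^\mathfrak{p}$, the resulting Hecke action is the same on both sides.

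The only real subtlety—and the step I expect to require the most care—is the commutation of étale cohomology with the $\sim$-limit. This is not a formal consequence of definitions; it relies on the almost purity/approximation techniques of Scholze in the perfectoid setting, applied to a pro-system of qcqs smooth rigid spaces with finite transition maps. Given that $\mathcal{M}_{\mathrm{Dr},\infty,\mathbb{C}_p}$ is known to be perfectoid and the arguments in \cite{Sch18} proceed in exactly this way in the $\mathrm{GL}_2$ case, the extension to $\mathrm{GL}_n$ is straightforward, but it is the one place where one cannot simply appeal to formal nonsense.
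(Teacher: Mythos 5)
Your proposal is correct and follows exactly the paper's route: Proposition \ref{sheafiso} plus the degenerate Leray spectral sequence gives $H^i(\mathrm{Sh}_{C^{\mathfrak{p}},\mathbb{C}_p},\mathbb{Q}_p/\mathbb{Z}_p)\cong H^i_{\text{ét}}(\mathbb{P}^{n-1}_{\mathbb{C}_p},\mathcal{F}_{\pi_{C^{\mathfrak{p}}}})$, and this is combined with \eqref{infcoho} and the proper base change identification to produce $\rho^i_{C^{\mathfrak{p}}}$. Your additional remarks on equivariance and on the commutation of étale cohomology with the $\sim$-limit are correct elaborations of points the paper treats as established background.
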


This is a form of local-global compatibility. We will now derive a more concrete version of it by combining it with the $\sigma$-typicity decomposition before, so as to specialize both sides of the above isomorphism to their ``$\mathfrak{m}$-parts''. To this end we need to extend the Hecke action and for this we need a cuspidality criterion.

\end{section}

\begin{section}{Digression on type theory and a cuspidality criterion}\label{digression}

Let $n\geq2$ be an integer as before and $L/\mathbb{Q}_{p}$ a finite extension. Set $G=\mathrm{GL}_{n}(L)$.
Just as the $n=2$ case, for general $n$ we also have a cuspidality criterion of admissible representations of $G$ in terms of containment of certain characters of filtration subgroups of $G$. To achieve this, we will work with a special extreme case in type theory. The reference for the following and more general framework is \cite{BK96}.

Let $E/L$ be the totally ramified extension of degree $n$ obtained by adjoining an $n$-th root $\alpha$ of a fixed uniformizer $\omega$ of $F$, namely $E=L[\alpha]$ with $\alpha^n=\omega$. Regarding $E=:V$ as an $n$-dimensional vector space over $L$, we have isomorphisms $\mathrm{End}_L(E) \cong M_n(L) =:A$ and $\mathrm{Aut}_L(E) \cong \mathrm{GL}_n(L) = G$. Let $\mathfrak{A}$ be the hereditary order corresponding to the chain lattice $\mathfrak{L}=\{\mathfrak{p}_E^i: i\in \mathbb{Z}\}$, that is, $\mathfrak{A} = \mathrm{End}_{\mathfrak{o}_L}^0(\mathfrak{L}):=\{x\in A\,|\, x\mathfrak{p}_E^i\subseteq \mathfrak{p}_E^{i}\,, \forall i\in \mathbb{Z} \}$. Then $\mathfrak{A}$ is the unique $\mathfrak{o}_L$-order in $A$ such that $$E^\times \subseteq \mathfrak{L}(\mathfrak{A}):=\{g\in G\,|\, g\mathfrak{A}g^{-1}=\mathfrak{A}\};$$ in other words, $E^\times$ normalises $\mathfrak{A}$. Moreover, $\mathfrak{A}$ is a principal order. We set $\mathfrak{P}=\mathrm{End}_{\mathfrak{o}_L}^1(\mathfrak{L}):=\{x\in A\,|\, x\mathfrak{p}_E^i\subseteq \mathfrak{p}_E^{i+1}\,, \forall i\in \mathbb{Z} \}$, the Jacobson radical of $\mathfrak{A}$.

Next, for $m$ a positive integer, we can and do choose an element $\beta_m\in E$ with $\nu_E(\beta_m)=-mn-1$ (for example, one can take $\beta_m=\varpi^{-m}\alpha^{-1}$); here $\nu_E$ is the discrete valuation on $E$. Then it follows that $\beta_m$ is minimal over $L$ in the sense of (1.4.14), Page 41 of \cite{BK96}. Recall that we also have the notion of a simple stratum as defined in (1.5.5) Definition, Page 43 of \cite{BK96}.
\begin{lemma}
Set $\beta_m=\varpi^{-m}\alpha^{-1} \in E$. Then the stratum $[\mathfrak{A},-\nu_{\mathfrak{A}}(\beta_m),0,\beta_m]$ is simple. 
\end{lemma}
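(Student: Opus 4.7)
The plan is to verify directly the three ``pure'' conditions together with the inequality on the critical exponent $k_0$ from BK96~(1.5.5), namely: $E := L[\beta_m]$ is a field, $E^\times$ normalizes $\mathfrak{A}$, $\nu_\mathfrak{A}(\beta_m) = -(mn+1)$, and $0 < -k_0(\beta_m,\mathfrak{A})$. The whole argument reduces to checking minimality of $\beta_m$ and then reading off $k_0$ from the BK96 computation in the minimal case.

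First I would dispose of the purity part. Since $\alpha = \varpi^{-m}\beta_m^{-1}$, one has $L[\beta_m] = L[\alpha] = E$, which is already known to be a degree $n$ field. The condition $E^\times \subseteq \mathfrak{K}(\mathfrak{A})$ is exactly the defining property of $\mathfrak{A}$ recalled in the excerpt, so it is automatic. For the valuation, the $\mathfrak{o}_L$-period of the chain $\{\mathfrak{p}_E^i\}$ equals $n$ (since $\varpi\mathfrak{p}_E^i = \mathfrak{p}_E^{i+n}$), and consequently $\nu_\mathfrak{A}$ restricts to $\nu_E$ on $E^\times$; this gives $\nu_\mathfrak{A}(\beta_m) = \nu_E(\varpi^{-m}\alpha^{-1}) = -mn-1$. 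So $[\mathfrak{A},mn+1,0,\beta_m]$ is pure.

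Next I would show that $\beta_m$ is minimal over $L$ in the sense of BK96~(1.4.14). There are two conditions to check. The residue-field generation condition is vacuous because $E/L$ is totally ramified, so $k_E = k_L$ and any element reduces into $k_L$. The coprimality condition $\gcd(\nu_E(\beta_m),e(E|L))=1$ becomes $\gcd(mn+1,n) = \gcd(1,n) = 1$, which is immediate. Hence $\beta_m$ is minimal.

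Once minimality is in hand, BK96~(1.4.15) furnishes $k_0(\beta_m,\mathfrak{A}) = \nu_\mathfrak{A}(\beta_m) = -(mn+1)$, so the simplicity inequality $r < -k_0(\beta_m,\mathfrak{A})$ reads $0 < mn+1$, which holds trivially. I do not anticipate any real obstacle: the whole argument amounts to unpacking definitions and invoking the minimal-case formula for $k_0$, which is the principal reason for introducing minimal elements in BK96. The only point to watch is normalizing $\nu_\mathfrak{A}$ correctly so that it coincides with $\nu_E$ on $E^\times$, and this is guaranteed by the $\mathfrak{o}_E$-stability of the chosen lattice chain.
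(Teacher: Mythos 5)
Your proof is correct and follows essentially the same route as the paper: verify the purity conditions, compute $\nu_{\mathfrak{A}}(\beta_m)=-(mn+1)$ via $\nu_{\mathfrak{A}}|_{E^\times}=\nu_E$, and use minimality of $\beta_m$ (which the paper records just before the lemma) so that $k_0(\beta_m,\mathfrak{A})=\nu_{\mathfrak{A}}(\beta_m)$ and the condition $0<-k_0$ holds. You are merely more explicit than the paper in spelling out the minimality check and the appeal to the $k_0$ formula for minimal elements.
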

\begin{proof}
It is clear that $E=L[\beta_m]$, so $L[\beta_m]$ is a field, and its non-zero elements normalise $\mathfrak{A}$ by our very choice of $\mathfrak{A}$ above. On the other hand, $\nu_{\mathfrak{A}}(\beta_m)$ is by definition the unique integer $r$ such that $\beta_m \in \mathfrak{P}^r\backslash\mathfrak{P}^{r+1}$, which is easily computed to be $-(mn+1)$. Thus $-\nu_{\mathfrak{A}}(\beta_m) >0$ and each condition in the definition of simple stratum is satisfied (see also the comments right after (1.5.5) Definition in \cite{BK96}).
\end{proof}
\begin{remark}
Note that $M:=-\nu_{\mathfrak{A}}(\beta_m) = mn+1 >0$ tends to infinity as $m$ tends to infinity. Denote $\tilde{M}:=[\frac{M}{2}]+1 >0$ for future use.
\end{remark}

Recall that for the hereditary order $\mathfrak{A}$ defined above, one has a sequence of filtration subgroups $U^k(\mathfrak{A})$ given by $U^k(\mathfrak{A}):=1+\mathfrak{P}^k$ for $k\geq1$ and $U^0(\mathfrak{A}):=\mathfrak{A}^\times$. Also in our case, $B:=\mathrm{End}_E(V)\cong E, \mathfrak{B}:=\mathfrak{A}\cap B\cong \mathcal{O}_E$ is a hereditary $\mathfrak{o}_E$-order with Jacobson radical $\mathfrak{Q}=\mathfrak{P}\cap B\cong \mathfrak{p}_E$. In particular, since $\beta_m$ is minimal, we have $H^1(\beta_m, \mathfrak{A})= (1+\mathfrak{p}_E)U^{\tilde{M}}(\mathfrak{A})$, cf. Corollary 3.1.13 of \cite{Con10}, hence $$H^1/U^{\tilde{M}}(\mathfrak{A})\cong (1+\mathfrak{p}_E)/(1+\mathfrak{p}_E^{\tilde{M}}).$$In the last step we have used the following two facts: (1) The property that $E^\times$ normalizes $U^{\tilde{M}}$ implies that $H^1$ normalizes $U^{\tilde{M}}$; (2) $H^1\cap U^{\tilde{M}} =(1+\mathfrak{p}_E^{\tilde{M}})$; both follow from easy computations. Moreover, $det_B: E^\times \to E^\times$ is the identity map in our case. In the following proposition, we keep the notations as above. Thus, $M=mn+1$ and we also set $N=M-\tilde{M}$.
\begin{proposition}\label{cuspidal}
Let $\psi: L \to \mathbb{C}^\times$ be a non-trivial (additive) character of level one (that is, $\psi$ is trivial on $\varpi\mathcal{O}_L$ but non-trivial on $\mathcal{O}_L$). Let $\alpha_m$ be the homomorphism $$\alpha_m: U^{\tilde{M}}(\mathfrak{A})\to \varpi^{-N}\mathcal{O}_L,\quad a\mapsto tr_{A/L}(\beta_m(a-1)).$$
If $\pi$ is a smooth irreducible representation of $\mathrm{GL}_n(L)$ such that $\pi|_{U^{\tilde{M}}(\mathfrak{A})}$ contains the character $\psi \circ \alpha_m$, then $\pi$ is cuspidal.
\end{proposition}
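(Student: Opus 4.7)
The plan is to interpret $\psi \circ \alpha_m$ as the restriction of a simple character in the sense of Bushnell--Kutzko attached to the simple stratum $[\mathfrak{A}, M, 0, \beta_m]$, promote it to a maximal simple type, and invoke the classification of supercuspidal representations of $\mathrm{GL}_n(L)$ via simple types. The decisive structural fact is that $E = L[\beta_m]$ is a totally ramified extension of $L$ of degree $n$, and hence a maximal subfield of $A = M_n(L)$; this puts us in the ``maximal'' case of the theory, where every simple type attached to $[\mathfrak{A}, M, 0, \beta_m]$ is cuspidal.

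Concretely, $\psi \circ \alpha_m$ is by construction the character $\psi_{\beta_m}$ of \cite{BK96} on $U^{\tilde{M}}(\mathfrak{A})$. Using that $H^1 = H^1(\beta_m, \mathfrak{A}) = (1+\mathfrak{p}_E)U^{\tilde{M}}(\mathfrak{A})$ contains $U^{\tilde{M}}(\mathfrak{A})$ as a normal subgroup with abelian quotient $(1+\mathfrak{p}_E)/(1+\mathfrak{p}_E^{\tilde{M}})$, and that $\psi_{\beta_m}$ is $H^1$-stable (as follows from the minimality of $\beta_m$), Clifford theory produces a character $\theta$ of $H^1$ occurring in $\pi|_{H^1}$ and extending $\psi_{\beta_m}$. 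Since $\beta_m$ is minimal and $B = E$, the conditions defining the set $\mathcal{C}(\mathfrak{A},0,\beta_m)$ of simple characters in \cite{BK96}, Chapter 3, reduce to the correct restriction on $U^{\tilde{M}}(\mathfrak{A})$ together with a factorization-through-$\det_B = \mathrm{id}_{E^\times}$ condition on $1+\mathfrak{p}_E$, both of which are automatic; hence $\theta$ lies in $\mathcal{C}(\mathfrak{A},0,\beta_m)$. From $\theta$ one then builds a $\beta$-extension $\kappa$ to $J = \mathcal{O}_E^\times J^1(\beta_m,\mathfrak{A})$ via the Heisenberg construction of \cite{BK96}: the quotient $J^1/H^1$ carries a non-degenerate alternating form induced by $\theta$, giving rise to a unique irreducible representation $\eta$ of $J^1$ with central character $\theta$, which extends to $J$. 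Because $E/L$ is totally ramified, $J/J^1 \cong k^\times$, and some nonzero $J$-isotypic component of $\pi|_{J}$ over a character $\tau$ of $k^\times$ supplies a maximal simple type $(J, \kappa \otimes \tau)$ occurring in $\pi$.

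The proof then concludes by invoking the exhaustion/classification theorem of \cite{BK96}, Chapter 8: every irreducible smooth representation of $\mathrm{GL}_n(L)$ containing a maximal simple type is supercuspidal, so $\pi$ is cuspidal. The most delicate point, and the main obstacle, is the Clifford-theoretic extension step: one must check that $\psi_{\beta_m}$ is stable under $H^1$-conjugation --- that is, that the trace form $x \mapsto tr_{A/L}(\beta_m x)$ is invariant modulo $\varpi\mathcal{O}_L$ under conjugation by $1+\mathfrak{p}_E$ (trivially, since $1+\mathfrak{p}_E$ centralizes $\beta_m$) and by $U^{\tilde{M}}(\mathfrak{A})$ (using $2\tilde{M} > M$) --- and then verify that any character of $H^1$ extending $\psi_{\beta_m}$ genuinely satisfies the defining conditions of \cite{BK96}, Chapter 3. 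Once these compatibilities are in place, the matching of $\tau$ inside $\pi|_J$ and the appeal to the classification theorem are essentially formal.
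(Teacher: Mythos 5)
Your proposal is correct and follows essentially the same route as the paper: produce from $\psi\circ\alpha_m$ a simple character of $H^1(\beta_m,\mathfrak{A})$ contained in $\pi$, pass to the Heisenberg representation $\eta(\theta)$ of $J^1$, then to a $\beta$-extension on $J$, observe that totally ramified $E/L$ of degree $n$ forces $e=f=1$ so that the resulting type is a maximal simple type, and conclude cuspidality from the Bushnell--Kutzko classification. The only difference is presentational: where you use Clifford theory plus a direct check that any character of $H^1$ extending $\psi\circ\alpha_m$ lies in $\mathcal{C}(\mathfrak{A},0,\beta_m)$ (and likewise that $\pi|_J$ contains a $\beta$-extension), the paper runs a Mackey dimension count showing $\mathrm{c\text{-}Ind}_{U^{\tilde{M}}(\mathfrak{A})}^{H^1}\psi_m$ (resp.\ $\mathrm{c\text{-}Ind}_{J^1}^{J}\eta(\theta)$) is exactly the direct sum of all simple characters (resp.\ all $\beta$-extensions), which amounts to the same verification.
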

\begin{proof}
We will prove that $\pi$ contains a maximal simple type; hence by Theorem 6.2.2 of \cite{BK96} it is cuspidal. Let us first show that $\pi$ contains a simple character $\theta$ of $H^1$. By assumption, there is an injection $$\psi_m:=\psi\circ\alpha_m \hookrightarrow \pi|_{U^{\tilde{M}}(\mathfrak{A})}$$ which gives rises to a non-zero homomorphism $$\text{c-Ind}_{U^{\tilde{M}}(\mathfrak{A})}^{H^1}\psi_m\to \pi|_{H^1}.$$ It then suffices to show that we have the following decomposition: $$\text{c-Ind}_{U^{\tilde{M}}(\mathfrak{A})}^{H^1}\psi_m=\underset{\theta \in C^1(\psi_m)}{ \bigoplus}\theta$$ where $C^1(\psi_m)$ is the set of simple characters of $H^1$ extending $\psi_m$. Clearly the right hand side is contained in the left; thus the equality holds if both sides have the same dimension. But by Mackey's restriction formula, the left side has dimension $$[H^1:U^{\tilde{M}}(\mathfrak{A})]=|(1+\mathfrak{p}_E)/(1+\mathfrak{p}_E^{\tilde{M}})|$$ while by 1. (b), Proposition 3.1.18 of \cite{Con10} the right side has dimension $$[U^1(\mathfrak{B}):U^{\tilde{M}}(\mathfrak{B})]=|(1+\mathfrak{p}_E)/(1+\mathfrak{p}_E^{\tilde{M}})|.$$ So they indeed have the same dimension, as claimed. We have the inclusions of compact open subgroups of $G$: 
$$H^1 \subseteq J^1 \subseteq J$$
Since there is a unique extension $\eta(\theta)$ of the simple character $\theta$ to $J^1$ and $J^1$ is compact, it follows that $\pi|_{J^1}$ contains this unique $\eta(\theta)$. Our final task is to show that $\pi|_{J}$ contains a simple type $(J, \lambda)$. 

Note that as a result of our choice of the hereditary order and the stratum above, one has $ef=n/[E:L]=1$, hence $e=f=1$ in our case. Thus $$J/J^1\cong GL(f, k_E)^e =k_E^\times$$ and the factor $\sigma$ in the definition of $\lambda=\kappa\otimes\sigma$ is nothing but a character inflated from $$k_E^\times \cong \mathfrak{o}_E^\times/(1+\mathfrak{o}_E).$$ So by Theorem 5.2.2 of \cite{BK96} the irreducible representation $\lambda$ in the definition of simple type coincides with the $\beta$-extension $\kappa$ of $\eta(\theta)$. Therefore it suffices to show that $\pi|_J$ contains some $\beta$-extension of $\eta(\theta)$. We make a counting argument again. Indeed, we obtain from the containment $\eta(\theta)\hookrightarrow \pi|_{J^{1}}$ a non-zero map
$$\text{c-Ind}_{J^1}^J\eta(\theta) \to \pi|_J.$$
We claim that $$\text{c-Ind}_{J^1}^J\eta(\theta)=\bigoplus \kappa$$ where $\kappa$ runs over all $\beta$-extensions of $\eta(\theta)$. But again, the left side has dimension by Mackey's formula $$[J:J^1] = |GL(f, k_E)^e|= |k_E^\times|$$ while the right side has dimension  $|\mathfrak{o}_E^\times/(1+\mathfrak{o}_E)|=|k_E^\times|$ by Theorem 5.2.2 of \cite{BK96}. We conclude that $\pi$ contains a simple type which is also maximal (namely $e=e(\mathfrak{B}|\mathfrak{o}_E)=1$) and the proof is complete.
\end{proof}
\begin{remark}
In this proposition, we could have just dealt with such a cuspidality criterion by assuming the containment of a simple character in $\pi$, for which the proof would be a little easier. The reason that we consider characters on the smaller subgroups $U^{\tilde{M}}$ rather than $H^1$'s is that the latter does not form a cofinal system of subgroups of $\mathrm{GL}_{n}(L)$ (that is, they cannot be as small as one might want), while the former does. This point will be important in Corollary \ref{ext}.
\end{remark}

Let $e$ be the ramification index of $[F_{\mathfrak{p}}:\mathbb{Q}_p]$ and fix a surjection $\mathcal{O}_L/\varpi^{me}\twoheadrightarrow \mathbb{Z}/p^m\mathbb{Z}$. Recall that $F$ is global field; now taking $L=F_{\mathfrak{p}}$, we have the following corollary (here we fix an embedding $A_m^\times \to \mathbb{C}^\times$).
\begin{corollary}\label{cuspidality}
Let $A_m=\mathbb{Z}_p[T]/((T^{p^m}-1)/(T-1))$. Let $\psi$ be a character of $L$ with coefficients in $A_m$ whose restriction to $\varpi^{-N}\mathcal{O}_L$ is the map
$$ \varpi^{-N}\mathcal{O}_L\stackrel{\times\varpi^N}{\xrightarrow{\sim}} \mathcal{O}_L \twoheadrightarrow \mathcal{O}_L/\varpi^{me}\twoheadrightarrow \mathbb{Z}/p^m\mathbb{Z} \to A_m^\times$$
with the last arrow mapping $1\in \mathbb{Z}/p^m\mathbb{Z}$ to $T\in A_m^\times$. Define $\psi_m=\psi\circ\alpha_m$. Then any automorphic representation $\pi$ of $G'$ appearing in 
$$C^0(G'(F)\backslash G'(\mathbb{A}_{F,f})/(U^{\tilde{M}}\times \mathcal{O}^\times_{F_\mathfrak{p}}  \times C^{\mathfrak{p}} ), \psi_m)[1/p]$$ 
is cuspidal at $\mathfrak{p}$.
\end{corollary}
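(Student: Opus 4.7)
The plan is to reduce this corollary to the preceding Proposition by passing to the local component at $\mathfrak{p}$ of any such $\pi$. The proof will consist of three moves: (i) translate the $A_m$-coefficient data into a complex-coefficient situation via an embedding $A_m[1/p] \hookrightarrow \mathbb{C}$; (ii) observe that the local factor $\pi_{\mathfrak{p}}$ inherits a $U^{\tilde{M}}(\mathfrak{A})$-character from the defining equivariance of the global space; and (iii) verify that the hypotheses of the Proposition hold so that the Proposition itself delivers the cuspidality.

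More concretely, I would first fix an embedding $\iota: A_m[1/p] \hookrightarrow \mathbb{C}$ sending $T$ to a chosen primitive $p^m$-th root of unity. Under $\iota$, the $A_m$-valued character $\psi$ becomes a complex additive character of $L = F_\mathfrak{p}$, and $\psi_m = \psi\circ\alpha_m$ becomes a complex character of $U^{\tilde{M}}(\mathfrak{A})$. Using the identification $G'(F_\mathfrak{p}) \cong \mathrm{GL}_n(F_\mathfrak{p}) \times \mathcal{O}_{F_\mathfrak{p}}^\times$, let $\pi_\mathfrak{p}^{(1)}$ denote the $\mathrm{GL}_n(F_\mathfrak{p})$-factor of $\pi_\mathfrak{p}$. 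Then $\pi_\mathfrak{p}^{(1)}$ is smooth irreducible, and the defining $\psi_m$-equivariance of the global space forces the restriction of $\pi_\mathfrak{p}^{(1)}$ to $U^{\tilde{M}}(\mathfrak{A})$ to contain the character $\iota\circ\psi_m = (\iota\circ\psi)\circ\alpha_m$.

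Before invoking the Proposition, I must arrange for the complex additive character $\iota\circ\psi$ to satisfy the level-one condition of the Proposition, that is, to be trivial on $\varpi\mathcal{O}_L$ but nontrivial on $\mathcal{O}_L$. Because $\alpha_m$ takes values in $\varpi^{-N}\mathcal{O}_L$, only the restriction of $\psi$ to this subgroup enters into $\psi_m$; hence I may freely replace $\iota\circ\psi$ on $L$ by any complex additive character whose restriction to $\varpi^{-N}\mathcal{O}_L$ agrees with it. The explicit chain defining $\psi$ factors through $\mathcal{O}_L/\varpi^{me} \twoheadrightarrow \mathbb{Z}/p^m\mathbb{Z}$, so taking the fixed surjection in such a way that its composite with $\mathcal{O}_L \twoheadrightarrow \mathcal{O}_L/\varpi\mathcal{O}_L$ is nontrivial secures the required level-one property.

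Applying the Proposition to the smooth irreducible complex representation $\pi_\mathfrak{p}^{(1)}$ together with the level-one character $\iota\circ\psi$ then yields that $\pi_\mathfrak{p}^{(1)}$ is cuspidal, which is exactly the cuspidality of $\pi$ at $\mathfrak{p}$. The only potential obstacle is the level-one verification of the previous paragraph, but this is a bookkeeping matter turning on the chosen surjection $\mathcal{O}_L/\varpi^{me} \twoheadrightarrow \mathbb{Z}/p^m\mathbb{Z}$; the real substance of the corollary lies in the Proposition itself, and the corollary is essentially a transcription of that local statement to the global setup.
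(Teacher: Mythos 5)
Your reduction to Proposition \ref{cuspidal} via the local component $\pi_{\mathfrak p}^{(1)}$ and a fixed embedding $A_m[1/p]\hookrightarrow\mathbb C$ is the right frame, and matches what the paper does implicitly. The gap is in the step you dismiss as bookkeeping: the level-one condition. Since $N\geq 0$, we have $\varpi\mathcal O_L\subseteq\mathcal O_L\subseteq\varpi^{-N}\mathcal O_L$, so the restriction of $\psi$ to $\varpi\mathcal O_L$ is already pinned down by the data in the statement; you are only free to modify $\psi$ \emph{outside} $\varpi^{-N}\mathcal O_L$, which has no effect on the level. Concretely, $\psi|_{\varpi\mathcal O_L}$ is the composite $\varpi\mathcal O_L\xrightarrow{\times\varpi^N}\varpi^{N+1}\mathcal O_L\to\mathcal O_L/\varpi^{me}\to\mathbb Z/p^m\mathbb Z$, and since $\varpi^{N+1}\mathcal O_L/\varpi^{me}\mathcal O_L$ generically does not lie in the kernel of the surjection onto $\mathbb Z/p^m\mathbb Z$, the character $\psi$ is generically \emph{not} trivial on $\varpi\mathcal O_L$; its level is roughly $me-N$, not $1$. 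Your suggested repair --- choosing the surjection $\mathcal O_L/\varpi^{me}\twoheadrightarrow\mathbb Z/p^m\mathbb Z$ so that it behaves well modulo $\varpi$ --- cannot work either: a homomorphism trivial on $\varpi\mathcal O_L/\varpi^{me}$ factors through the residue field, which is $p$-torsion, so it cannot surject onto $\mathbb Z/p^m\mathbb Z$ for $m>1$; and in any case that surjection is fixed earlier in the paper and is needed as given for the congruence argument in Corollary \ref{ext}.

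The missing idea, which is the actual content of the paper's proof, is to change the level of $\psi$ and the index of the stratum \emph{simultaneously}. Let $k$ be the smallest integer with $\psi$ trivial on $\varpi^k\mathcal O_L$ and set $\psi'(x)=\psi(\varpi^{k-1}x)$, which is of level one. Because $\beta_{m+k-1}=\varpi^{-(k-1)}\beta_m$, one has $\alpha_{m+k-1}=\varpi^{-(k-1)}\alpha_m$ and hence $\psi'\circ\alpha_{m+k-1}=\psi\circ\alpha_m$ on the relevant subgroup; so an eigenvector for $\psi\circ\alpha_m$ on $U^{\tilde M}$ is an eigenvector for $\psi'\circ\alpha_{m+k-1}$ on $U^{\tilde M_{m+k-1}}$, and Proposition \ref{cuspidal} applies with the shifted index $m+k-1$ and the level-one character $\psi'$. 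Without this re-indexing of $\beta_m$ your invocation of the Proposition does not go through, because the Proposition requires the level-one character and the element $\beta_m$ to be matched to the \emph{same} $m$.
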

\begin{proof}
Let $k$ be the smallest integer such that $\psi$ is trivial on $\varpi^k\mathcal{O}_L$, so $-N<k\leq me-N$. Let $\psi'$ be the character of $L$ given by $x\mapsto \psi(\varpi^{k-1}x)$ so that $\psi'$ is a character of level one. Then $\pi_{\mathfrak{p}}|_{U^{\tilde{M}_{m+k-1}}}$ contains the character $\psi' \circ \alpha_{m+k-1}$, where $\tilde{M}_{m+k-1}=n(m+k-1)+1$; indeed, for any $g\in U^{\tilde{M}_{m+k-1}}$ we have 
$$\pi(g)v=\psi(\alpha_m(g))v=\psi'(\varpi^{-(k-1)}\alpha_m(g))v=\psi'\circ \alpha_{m+k-1}(g)v$$
where $v$ is an eigenvector of $\pi|_{U^{\tilde{M}}}$. Therefore one concludes by the proposition above.
\begin{remark}
The existence of such an additive character $\psi$ of $L$ in the above corollary can be seen from the following fact: for each integer $k$, one can always extend a character $\chi$ on $\omega^{k}\mathcal{O}_L$ to $\omega^{k-1}\mathcal{O}_L$, since $\mathbb{C}^\times$ is an injective $\mathbb{Z}$-module. (Here we are regarding each $\omega^{k}\mathcal{O}_L$ as a $\mathbb{Z}$-module, too.)
\end{remark}
\end{proof}

Let us return to our context. Note that as $G$ and $G'$ are isomorphic to $\mathrm{GL}_1(B) \times_{\mathbb{G}_m}\text{Res}_{K/F}\mathbb{G}_m$ and $\mathrm{GL}_1(D) \times_{\mathbb{G}_m}\text{Res}_{K/F}\mathbb{G}_m$ respectively, applying the results of \cite{LS19} along with the classical Jacquet-Langlands correspondence between $\mathrm{GL}_1(B)$ (resp. $\mathrm{GL}_1(D)$) and $\mathrm{GL}_n$, we know that an automorphic representation of $G'$ transfers to $G$ if and only if it is a discrete series at $\mathfrak{p}$.
\begin{corollary}{\label{ext}}
Let $\mathbb{T}(C^{\mathfrak{p}})_\mathfrak{m}$ be defined as in Section 2, so that it acts faithfully on $H^{n-1}(C^{\mathfrak{p}}, \mathbb{Q}_{p}/\mathbb{Z}_p)_\mathfrak{m}$. The natural action of $\mathbb{T}$ on
$$\pi_\mathfrak{m}=\pi_{{C^{\mathfrak{p}}},\mathfrak{m}}=C^0(G'(F)\backslash G'(\mathbb{A}_{F,f})/(\mathcal{O}^\times_{F_\mathfrak{p}}  \times C^{\mathfrak{p}} ), \mathbb{Q}_p/\mathbb{Z}_p)_\mathfrak{m}$$
extends to a continuous action of $\mathbb{T}(C^{\mathfrak{p}})_\mathfrak{m}$.
\end{corollary}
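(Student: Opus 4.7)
The plan is to reduce the continuous extension to the following concrete statement: for each compact open $U^{\tilde{M}} \subseteq \mathrm{GL}_n(F_\mathfrak{p})$ and each $k \geq 1$, the natural action of $\mathbb{T}$ on the finite $\mathbb{Z}/p^k$-module $V := \pi_\mathfrak{m}[p^k]^{U^{\tilde{M}}}$ factors through the image of $\mathbb{T}(UC^\mathfrak{p})_\mathfrak{m}$ modulo $p^k$ for some sufficiently small $U \subseteq (B_\mathfrak{q}^{op})^\times$. Once this is established, continuity of the extended $\mathbb{T}(C^\mathfrak{p})_\mathfrak{m}$-action is automatic: the kernel of $\mathbb{T}(C^\mathfrak{p})_\mathfrak{m} \twoheadrightarrow \mathbb{T}(UC^\mathfrak{p})_\mathfrak{m}/p^k$ is open by the very definition of the profinite topology on $\mathbb{T}(C^\mathfrak{p})_\mathfrak{m}$, and $\pi_\mathfrak{m}$ is a filtered colimit of the $V$'s. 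This mirrors the strategy of \cite{Sch18}, with Corollary \ref{cuspidality} replacing the $n=2$ cuspidality criterion.

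The first step is to pass to a character decomposition of $V$ at $\mathfrak{p}$. I would tensor $V$ with the ring $A_m$ of Corollary \ref{cuspidality} and decompose $V \otimes_{\mathbb{Z}_p} A_m$ into $\psi$-isotypic components, where $\psi$ runs over characters $U^{\tilde{M}}/U^M \to A_m^\times$ with $M$ chosen large enough to detect all characters arising. Because $\mathbb{T}$ commutes with $U^{\tilde{M}}$, the decomposition is $\mathbb{T}$-stable, so it suffices to prove the factorization on each isotypic component separately. After enlarging $m$ and $\tilde{M}$, each such $\psi$ will, upon restriction to a deeper congruence subgroup, contain a $\mathrm{GL}_n(F_\mathfrak{p})$-conjugate of some $\psi_{m'}$; since conjugation only twists the representation by an inner automorphism, it preserves the Hecke eigensystem.

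On the resulting $\psi_m$-isotypic piece, Corollary \ref{cuspidality} guarantees that every automorphic representation $\pi'$ of $G'$ contributing over $\overline{\mathbb{Q}}_p$ is cuspidal at $\mathfrak{p}$. The Jacquet-Langlands correspondence of \cite{LS19}, combined with the classical local JL between $B_\mathfrak{q}^\times$ and $\mathrm{GL}_n(F_\mathfrak{p})$, then transfers $\pi'$ to an automorphic representation $\pi$ of $\tilde{G}$ whose $\mathfrak{q}$-component is the finite-dimensional irreducible $B_\mathfrak{q}^\times$-representation matching $\pi'_\mathfrak{p}$. Its $U$-fixed vectors are nonzero for $U \subseteq (B_\mathfrak{q}^{op})^\times$ sufficiently small, and under the middle-degree vanishing condition on $\mathfrak{m}$, $\pi$ contributes to $H^{n-1}(\mathrm{Sh}_{UC^\mathfrak{p},\overline{K}}, \overline{\mathbb{Q}}_p)_\mathfrak{m}$ exactly as in the proof of Proposition \ref{assgal}. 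The corresponding Hecke eigensystem therefore appears in $\mathbb{T}(UC^\mathfrak{p})_\mathfrak{m}$; passing to integral closures gives the desired factorization modulo $p^k$.

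The main obstacle I anticipate is the combinatorial bookkeeping in the first step. The characters $\psi$ appearing in the isotypic decomposition need not themselves be minimal in the sense of Bushnell-Kutzko, so reducing them to one of the specific characters $\psi_m$ requires allowing $m$ and $\tilde{M}$ to grow and carefully tracking how the eventual Hecke level $U$ depends on $(m, \tilde{M}, k)$. In the $n=2$ case of \cite{Sch18} the explicit structure of $\mathrm{GL}_2$ makes this transparent; here I expect to rely more systematically on the type-theoretic machinery of \cite{BK96} invoked above to justify the conjugacy reduction and to bound $U$ uniformly in terms of the data.
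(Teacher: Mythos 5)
Your reduction to finite level and finite torsion, and your final Jacquet--Langlands transfer step, both match the paper. But the middle of your argument has a genuine gap: the isotypic decomposition you propose cannot deliver the cuspidality criterion. The module $V=\pi_\mathfrak{m}[p^k]^{U^{\tilde{M}}}$ consists of $U^{\tilde{M}}$-\emph{invariant} vectors, so the only character of $U^{\tilde{M}}/U^{M}$ occurring in $V\otimes_{\mathbb{Z}_p}A_m$ is the trivial one, and the trivial character is not a $\mathrm{GL}_n(F_\mathfrak{p})$-conjugate of any $\psi_{m'}$ (nor does it become one on any deeper congruence subgroup --- it stays trivial). Indeed it cannot be: representations containing the trivial character of a compact open subgroup include unramified principal series, which are not cuspidal and do not transfer to $\tilde{G}$ under Jacquet--Langlands. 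So the claim ``each such $\psi$ will contain a conjugate of some $\psi_{m'}$'' fails precisely for the character you need, and Corollary \ref{cuspidality} never becomes applicable to $V$.

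The missing idea is a congruence, not a decomposition. The paper observes that $A_m/(T-1)\cong\mathbb{Z}/p^m\mathbb{Z}$ and that $\psi_m$ is trivial modulo $(T-1)$, which yields a $\mathbb{T}$-equivariant \emph{surjection}
$$C^0\bigl(G'(F)\backslash G'(\mathbb{A}_{F,f})/(U^{\tilde{M}}\times\mathcal{O}^\times_{F_\mathfrak{p}}\times C^{\mathfrak{p}}),\psi_m\bigr)_\mathfrak{m}\twoheadrightarrow C^0\bigl(G'(F)\backslash G'(\mathbb{A}_{F,f})/(U^{\tilde{M}}\times\mathcal{O}^\times_{F_\mathfrak{p}}\times C^{\mathfrak{p}}),\mathbb{Z}/p^m\mathbb{Z}\bigr)_\mathfrak{m}.$$
The source is $p$-torsion free, so one may check the factorization of the Hecke action after inverting $p$, where Corollary \ref{cuspidality} applies to show every constituent is cuspidal at $\mathfrak{p}$ and hence transfers to $\tilde{G}$, landing its eigensystem in $\mathbb{T}(U^{\tilde{M}}C^\mathfrak{p})_\mathfrak{m}$; the surjection then carries the extended action down to the trivial-character space you actually care about. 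Every mod $p^m$ eigensystem in the invariant space is thereby congruent to one coming from a form cuspidal at $\mathfrak{p}$ --- this Deligne--Serre style congruence is the whole point of introducing $A_m$ and $\psi_m$, and it is what your proposal needs in place of the conjugacy reduction.
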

\begin{proof}
We follow the proof of Scholze in \cite{Sch18}.
It is enough to prove this for each group
$$C^0(G'(F)\backslash G'(\mathbb{A}_{F,f})/(K'\times \mathcal{O}^\times_{F_\mathfrak{p}}  \times C^{\mathfrak{p}} ), \mathbb{Z}/p^m\mathbb{Z})_\mathfrak{m}$$
with $K'\subseteq \mathrm{GL}_n(F_\mathfrak{p})$ compact open subgroups. We may assume that $K' = U^{\tilde{M}}$ is of the form in Proposition \ref{cuspidal} with $\tilde{M} = [\frac{mn+1}{2}]+1 $ for varying $m$, as these subgroups form a cofinal system. In this case, $\mathbb{Z}/p^m\mathbb{Z} \cong A_m/(T-1)$ and $\psi_m$ mod $(T-1)$ is trivial. Hence there is a $\mathbb{T}$-equivariant surjection
\begin{equation*}
\begin{aligned}
 C^0(G'(F)\backslash G'(\mathbb{A}_{F,f})/(U^{\tilde{M}} \times  \mathcal{O}_{F_\mathfrak{p}}^\times & \times  C^{\mathfrak{p}}), \psi_m)_\mathfrak{m} \\
 & \to C^0(G'(F)\backslash G'(\mathbb{A}_{F,f})/(U^{\tilde{M}} \times \mathcal{O}_{F_\mathfrak{p}}^\times \times C^{\mathfrak{p}}), \mathbb{Z}/p^m\mathbb{Z})_\mathfrak{m}.
 \end{aligned}
\end{equation*}
Thus it suffices to show that the action of $\mathbb{T}$ on
$$M:=C^0(G'(F)\backslash G'(\mathbb{A}_{F,f})/(U^{\tilde{M}} \times \mathcal{O}^\times_{F_\mathfrak{p}}  \times C^{\mathfrak{p}} ), \psi_m)_\mathfrak{m}$$
extends continuously to $\mathbb{T}(C^{\mathfrak{p}})_\mathfrak{m}$. But $M$ is $p$-torsion free, so it is enough to check in characteristic $0$. In that case, by Corollary \ref{cuspidality} all automorphic representations of $G'$ appearing in $M[1/p]$ are cuspidal at the place $\mathfrak{p}$ and thus transfer by Jacquet-Langlands correspondence to $\tilde{G}$, so that after transfer they show up in the cohomology group
$H^{n-1}(\mathrm{Sh}_{{U^{\tilde{M}}C^\mathfrak{p}},\mathbb{C}}, \mathbb{Z}_p)_{\mathfrak{m}}$
for $U^{\tilde{M}}$ sufficiently small. Since $\mathbb{T}(C^{\mathfrak{p}})_\mathfrak{m} \twoheadrightarrow \mathbb{T}(U^{\tilde{M}}C^{\mathfrak{p}})_\mathfrak{m}$ acts continuously on $$H^{n-1}(\mathrm{Sh}_{{U^{\tilde{M}}C^\mathfrak{p}},\mathbb{C}}, \mathbb{Z}_p)_{\mathfrak{m}},$$ the result follows.

\end{proof}
\end{section}

\begin{section}{The local-global compatibility}\label{seclgc}

With the previous preparations, let us now deduce the concrete form of local-global compatibility mentioned before. Recall that there is an $n$-dimensional Galois representation
$$\sigma=\sigma_{\mathfrak{m}}: G_{K} \to \mathrm{GL}_{n}(\mathbb{T}(C^{\mathfrak{p}})_{\mathfrak{m}})$$ 
associated with $\mathfrak{m}$, as well as its reduction modulo $\mathfrak{m}$
$$\bar{\sigma}=\bar{\sigma}_{\mathfrak{m}}: G_{K} \to \mathrm{GL}_{n}(\mathbb{F}_q)$$
which is by definition modular; here $q$ is the cardinality of $\mathbb{T}(C^{\mathfrak{p}})/\mathfrak{m}$. By Proposition $\ref{typic}$ and Corollary \ref{ext}, $\rho^{n-1}_{C^{\mathfrak{p}},\mathfrak{m}}$ is a $\sigma$-typic $\mathbb{T}(C^{\mathfrak{p}})_{\mathfrak{m}}[\mathrm{Gal}_K]$-module, so we have
$$\rho^{n-1}_{C^{\mathfrak{p}},\mathfrak{m}}=\sigma\otimes_{\mathbb{T}(C^{\mathfrak{p}})_{\mathfrak{m}}}\rho[\sigma]$$
for some $\mathbb{T}(C^{\mathfrak{p}})_{\mathfrak{m}}[B_\mathfrak{q}^\times]$-module $\rho[\sigma]$. 

\begin{corollary}
There is a canonical $\mathbb{T}(C^{\mathfrak{p}})_{\mathfrak{m}}[\mathrm{Gal}_{F_{\mathfrak{p}}}\times B_\mathfrak{q}^\times]$-equivariant isomorphism
$$H^{n-1}_{\text{ét}}(\mathbb{P}_{\mathbb{C}_p}^{n-1}, \mathcal{F}_{\pi_{C^{\mathfrak{p}},\mathfrak{m}}}) \cong \sigma|_{\mathrm{Gal}_{F_{\mathfrak{p}}}}\otimes_{\mathbb{T}(C^{\mathfrak{p}})_{\mathfrak{m}}}\rho[\sigma].$$
The $\mathbb{T}(C^{\mathfrak{p}})$-module $\rho[\sigma]$ is faithful.
\end{corollary}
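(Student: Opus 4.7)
The plan is to bootstrap the corollary from the three preparatory results already in place: the weak local-global compatibility of Theorem \ref{cmp}, the $\sigma$-typic decomposition of Proposition \ref{typic}, and the extension of the Hecke action of Corollary \ref{ext}. First I would localize the isomorphism of Theorem \ref{cmp} at $\mathfrak{m}$. The key inputs here are that Scholze's construction $\pi \mapsto \mathcal{F}_\pi$ is exact in $\pi$ and that étale cohomology of the adic space $\mathbb{P}_{\mathbb{C}_p}^{n-1}$ with torsion coefficients commutes with filtered colimits. Since $\mathbb{T}$ acts on $\pi_{C^{\mathfrak{p}}}$ through $\mathbb{Z}_p$-linear endomorphisms and everything in sight is $p$-power torsion, the exact functor $\pi \mapsto H^{n-1}_{\text{ét}}(\mathbb{P}_{\mathbb{C}_p}^{n-1}, \mathcal{F}_\pi)$ commutes with passage to the $\mathfrak{m}$-part, giving a canonical $\mathbb{T}$-equivariant isomorphism
$$H^{n-1}_{\text{ét}}(\mathbb{P}_{\mathbb{C}_p}^{n-1}, \mathcal{F}_{\pi_{C^{\mathfrak{p}},\mathfrak{m}}}) \cong \rho_{C^{\mathfrak{p}},\mathfrak{m}}^{n-1}$$
of $(\mathrm{Gal}_{F_\mathfrak{p}} \times B_\mathfrak{q}^\times)$-representations.

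Next, Corollary \ref{ext} promotes the $\mathbb{T}$-action on $\pi_{C^{\mathfrak{p}},\mathfrak{m}}$ to a continuous action of $\mathbb{T}(C^{\mathfrak{p}})_{\mathfrak{m}}$, so by functoriality of Scholze's construction and density of $\mathbb{T}$ in $\mathbb{T}(C^{\mathfrak{p}})_{\mathfrak{m}}$ the above isomorphism becomes $\mathbb{T}(C^{\mathfrak{p}})_{\mathfrak{m}}$-equivariant. Proposition \ref{typic} (together with the trivial observation that $\sigma$-typicity is preserved under Hecke-equivariant submodules and colimits) then lets me write $\rho_{C^{\mathfrak{p}},\mathfrak{m}}^{n-1} \cong \sigma \otimes_{\mathbb{T}(C^{\mathfrak{p}})_{\mathfrak{m}}} \rho[\sigma]$ for a $\mathbb{T}(C^{\mathfrak{p}})_{\mathfrak{m}}[B_\mathfrak{q}^\times]$-module $\rho[\sigma]$ on which $\mathrm{Gal}_K$ acts trivially. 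Restricting $\sigma$ to $\mathrm{Gal}_{F_\mathfrak{p}}$ on the right-hand side yields the desired canonical isomorphism.

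For faithfulness of $\rho[\sigma]$ over $\mathbb{T}(C^{\mathfrak{p}})_{\mathfrak{m}}$: because $\sigma$ is a free $\mathbb{T}(C^{\mathfrak{p}})_{\mathfrak{m}}$-module of rank $n$, any $t \in \mathbb{T}(C^{\mathfrak{p}})_{\mathfrak{m}}$ annihilating $\rho[\sigma]$ also annihilates $\sigma \otimes_{\mathbb{T}(C^{\mathfrak{p}})_{\mathfrak{m}}} \rho[\sigma] \cong \rho^{n-1}_{C^{\mathfrak{p}},\mathfrak{m}}$. Setting $N_k := \varinjlim_U H^{n-1}(\mathrm{Sh}_{UC^{\mathfrak{p}},\overline{K}}, \mathbb{Z}/p^k\mathbb{Z})_\mathfrak{m}$, one has $\rho^{n-1}_{C^{\mathfrak{p}},\mathfrak{m}} = \varinjlim_k N_k$ (realized via the $p^k$-torsion subgroups) and $\tilde{H}^{n-1}(C^{\mathfrak{p}}, \mathbb{Z}_p)_\mathfrak{m} = \varprojlim_k N_k$. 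Annihilation of the colimit forces annihilation of each $N_k$, hence of the inverse limit; but $\mathbb{T}(C^{\mathfrak{p}})_{\mathfrak{m}}$ was shown in Section 2 to act faithfully on $\tilde{H}^{n-1}(C^{\mathfrak{p}}, \mathbb{Z}_p)_\mathfrak{m}$, so $t = 0$.

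The main obstacle I anticipate is the bookkeeping around compatibility of Hecke actions through the comparison: the Hecke operators act on the local side via the coefficient sheaf $\mathcal{F}_{\pi_{C^{\mathfrak{p}},\mathfrak{m}}}$, whereas on the global side they come from algebraic correspondences on the tower $\{\mathrm{Sh}_{UC^{\mathfrak{p}}}\}$. Checking that the sheaf-level isomorphism of Proposition \ref{sheafiso}, after $\mathfrak{m}$-localization and taking cohomology, carries the extended $\mathbb{T}(C^{\mathfrak{p}})_{\mathfrak{m}}$-action through consistently—and that both \emph{a priori} different tensor decompositions of $\rho^{n-1}_{C^{\mathfrak{p}},\mathfrak{m}}$ agree—is the part requiring some care rather than the formal identification itself.
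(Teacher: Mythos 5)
Your proof of the displayed isomorphism is exactly the paper's: localize the isomorphism of Theorem \ref{cmp} at $\mathfrak{m}$ (using exactness of $\pi\mapsto\mathcal{F}_\pi$ and commutation of cohomology with filtered colimits), promote the Hecke action via Corollary \ref{ext}, and feed in the $\sigma$-typic decomposition of Proposition \ref{typic}; the equivariance bookkeeping you worry about at the end is dispatched in the paper simply by the $\mathbb{T}$-equivariance already recorded in Theorem \ref{cmp} together with continuity/density of $\mathbb{T}$ in $\mathbb{T}(C^{\mathfrak{p}})_{\mathfrak{m}}$.

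The one place you go beyond the paper --- which asserts faithfulness of $\rho[\sigma]$ without argument --- is your last paragraph, and there the step ``annihilation of the colimit forces annihilation of each $N_k$'' is not justified. The transition maps $N_k\to N_{k+1}$ are induced by $\mathbb{Z}/p^k\xrightarrow{\times p}\mathbb{Z}/p^{k+1}$, and the long exact sequence shows their kernels are images of $\varinjlim_U H^{n-2}(\mathrm{Sh}_{UC^{\mathfrak{p}},\overline{K}},\mathbb{Z}/p)_\mathfrak{m}$, which under the concentration hypothesis is identified (by the coefficient sequence for $\mathbb{Z}_p\xrightarrow{\times p}\mathbb{Z}_p\to\mathbb{Z}/p$) with the a priori nonzero $p$-torsion of middle-degree integral cohomology; so an element of $\mathbb{T}(C^{\mathfrak{p}})_{\mathfrak{m}}$ killing $\varinjlim_k N_k$ need not kill each $N_k$, and in particular one cannot pass directly from the colimit to the inverse limit $\tilde{H}^{n-1}(C^{\mathfrak{p}},\mathbb{Z}_p)_\mathfrak{m}$. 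To repair this you would need either to show the relevant transition maps are injective (e.g.\ under a torsion-freeness hypothesis on $H^{n-1}(\mathrm{Sh}_{UC^{\mathfrak{p}}},\mathbb{Z}_p)_\mathfrak{m}$) or to argue at finite level, comparing $H^{n-1}(\mathrm{Sh}_{UC^{\mathfrak{p}}},\mathbb{Q}_p/\mathbb{Z}_p)_\mathfrak{m}$ with $H^{n-1}(\mathrm{Sh}_{UC^{\mathfrak{p}}},\mathbb{Z}_p)_\mathfrak{m}$ via universal coefficients, where $\mathbb{T}(UC^{\mathfrak{p}})_\mathfrak{m}$ is known to act faithfully. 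Since the paper itself supplies no proof of faithfulness, this is a gap in an added claim rather than a deviation from the intended argument, but as written the step is false in general.
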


\begin{proof}
As taking cohomology commutes with localization on $\mathbb{T}$, we have 
$$H^{n-1}_{\text{ét}}(\mathbb{P}_{\mathbb{C}_p}^{n-1}, \mathcal{F}_{\pi_{C^{\mathfrak{p}},\mathfrak{m}}}) \cong H^{n-1}_{\text{ét}}(\mathbb{P}_{\mathbb{C}_p}^{n-1}, \mathcal{F}_{\pi_{C^{\mathfrak{p}}}})_\mathfrak{m}.$$
By Theorem \ref{cmp} the right hand side is isomorphic to $\rho^{n-1}_{C^{\mathfrak{p}},\mathfrak{m}}$, which by the comments before the corollary admits the desired $\sigma$-typic decomposition. 
\end{proof}

\begin{remark}
In the proof of the above corollary it is guaranteed by corollary \ref{ext} that there is a well defined $\mathbb{T}(C^{\mathfrak{p}})_{\mathfrak{m}}$-module structure on $\pi_\mathfrak{m}$ (so that we can use the formalism of $A[G]$-modules). We also identified tacitly the two groups $\mathrm{Gal}_{K_\mathfrak{q}}$ and $\mathrm{Gal}_{F_\mathfrak{p}}$ under the canonical isomorphism $F_\mathfrak{p} \cong K_\mathfrak{q}$.
\end{remark}
By the formalism of $\sigma$-typicity of Section 5 of \cite{Sch18}, this implies that the localization $\pi_{{C^{\mathfrak{p}},\mathfrak{m}}}$ determines the local Galois representation
$$\sigma|_{\mathrm{Gal}_{F_{\mathfrak{p}}}}: {\mathrm{Gal}_{F_{\mathfrak{p}}}} \to \mathrm{GL}_{n}(\mathbb{T}(C^{\mathfrak{p}})_{\mathfrak{m}})$$
at least when $\bar{\sigma}|_{\mathrm{Gal}_{F_{\mathfrak{p}}}}$ is absolutely irreducible. 

\begin{theorem}
Assume that $\bar{\sigma}|_{\mathrm{Gal}_{F_{\mathfrak{p}}}}$ is absolutely irreducible. Then $\sigma|_{\mathrm{Gal}_{F_{\mathfrak{p}}}}$ is determined by $\pi_{C^\mathfrak{p},\mathfrak{m}}$. To be more precise, the $\mathbb{T}(C^{\mathfrak{p}})_{\mathfrak{m}}[\mathrm{Gal}_{F_{\mathfrak{p}}}]$-module
$$H^{n-1}_{\text{ét}}(\mathbb{P}_{\mathbb{C}_p}^{n-1}, \mathcal{F}_{\pi_{C^{\mathfrak{p}},\mathfrak{m}}})$$
is $\sigma|_{\mathrm{Gal}_{F_{\mathfrak{p}}}}$-typic, and faithful as a $\mathbb{T}(C^{\mathfrak{p}})_\mathfrak{m}$-module; it follows from Lemma 5.5 of \cite{Sch18} that this module determines $\sigma|_{\mathrm{Gal}_{F_{\mathfrak{p}}}}$uniquely.
\end{theorem}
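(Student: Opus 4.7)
The plan is to deduce this theorem as an essentially formal consequence of the preceding corollary combined with Lemma 5.5 of \cite{Sch18}. The corollary already gives the decomposition
$$H^{n-1}_{\text{ét}}(\mathbb{P}_{\mathbb{C}_p}^{n-1}, \mathcal{F}_{\pi_{C^{\mathfrak{p}},\mathfrak{m}}}) \cong \sigma|_{\mathrm{Gal}_{F_{\mathfrak{p}}}}\otimes_{\mathbb{T}(C^{\mathfrak{p}})_{\mathfrak{m}}}\rho[\sigma]$$
together with the faithfulness of $\rho[\sigma]$ over $\mathbb{T}(C^{\mathfrak{p}})_{\mathfrak{m}}$. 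Granting this, it suffices to verify the two hypotheses of Lemma 5.5 of \cite{Sch18}: that the displayed module is $\sigma|_{\mathrm{Gal}_{F_{\mathfrak{p}}}}$-typic and that it is faithful as a $\mathbb{T}(C^{\mathfrak{p}})_{\mathfrak{m}}$-module.

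For typicity, first observe that the notion of $\tau$-typicity (Definition 5.2 of \cite{Sch18}) for a representation $\tau: G \to \mathrm{GL}_n(R)$ requires that the reduction modulo $\mathfrak{m}_R$ be absolutely irreducible. Take $\tau = \sigma|_{\mathrm{Gal}_{F_{\mathfrak{p}}}}$; its reduction is precisely $\bar{\sigma}|_{\mathrm{Gal}_{F_{\mathfrak{p}}}}$, which is absolutely irreducible by hypothesis. The decomposition displayed above then exhibits $\rho[\sigma]$ (with trivial Galois action) as a witness that the cohomology group is $\sigma|_{\mathrm{Gal}_{F_{\mathfrak{p}}}}$-typic; no further work is needed here. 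Note in particular that the typic structure is obtained simply by restricting the $\sigma$-typic structure of Proposition \ref{typic} from $\mathrm{Gal}_K$ to $\mathrm{Gal}_{F_\mathfrak{p}}$, a purely formal operation which is legitimate once the absolute irreducibility of the restriction is known.

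For faithfulness, since $\sigma|_{\mathrm{Gal}_{F_{\mathfrak{p}}}}$ is realized on a free $\mathbb{T}(C^{\mathfrak{p}})_{\mathfrak{m}}$-module of rank $n$, forgetting the Galois action gives an isomorphism of $\mathbb{T}(C^{\mathfrak{p}})_{\mathfrak{m}}$-modules $\sigma|_{\mathrm{Gal}_{F_{\mathfrak{p}}}}\otimes_{\mathbb{T}(C^{\mathfrak{p}})_{\mathfrak{m}}}\rho[\sigma] \cong \rho[\sigma]^{\oplus n}$, and a finite direct sum of a faithful module is still faithful. Lemma 5.5 of \cite{Sch18} then yields the unique determination of $\sigma|_{\mathrm{Gal}_{F_{\mathfrak{p}}}}$ from this $\mathbb{T}(C^{\mathfrak{p}})_{\mathfrak{m}}[\mathrm{Gal}_{F_{\mathfrak{p}}}]$-module, hence from $\pi_{C^{\mathfrak{p}},\mathfrak{m}}$ via Scholze's functor. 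The substantive work -- the $\sigma$-typic decomposition of completed cohomology (Proposition \ref{typic}), the extension of the Hecke action through the cuspidality criterion (Corollary \ref{ext}), and the sheaf-theoretic comparison (Theorem \ref{cmp}) -- has already been carried out upstream, so there is no genuine obstacle at this final formal step; the only thing to be careful about is matching the conventions of Definition 5.2 of \cite{Sch18} to the restricted setting at $\mathfrak{p}$.
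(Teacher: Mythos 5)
Your proposal is correct and follows essentially the same route as the paper, which treats the theorem as a formal consequence of the preceding corollary's decomposition $H^{n-1}_{\text{ét}}(\mathbb{P}_{\mathbb{C}_p}^{n-1}, \mathcal{F}_{\pi_{C^{\mathfrak{p}},\mathfrak{m}}}) \cong \sigma|_{\mathrm{Gal}_{F_{\mathfrak{p}}}}\otimes_{\mathbb{T}(C^{\mathfrak{p}})_{\mathfrak{m}}}\rho[\sigma]$ together with Lemma 5.5 of \cite{Sch18}. Your explicit verification of the two hypotheses of that lemma (typicity via the absolute irreducibility of $\bar{\sigma}|_{\mathrm{Gal}_{F_{\mathfrak{p}}}}$, and faithfulness via the underlying module isomorphism with $\rho[\sigma]^{\oplus n}$) is exactly the content the paper leaves implicit.
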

This more concrete form of local-global compatibility shows that the local component at $\mathfrak{p}$ of the global Galois representation $\sigma_\mathfrak{m}$ associated with the eigensystem $\mathfrak{m}$ is determined compatibly by $\pi_{C^\mathfrak{p},\mathfrak{m}}$ through the cohomology group $H^{n-1}_{\text{ét}}(\mathbb{P}_{\mathbb{C}_p}^{n-1}, \mathcal{F}_{\pi_{C^{\mathfrak{p}},\mathfrak{m}}})$. Now we want to prove a similar result for the $\mathfrak{m}$-torsion $\pi_{C^{\mathfrak{p}}}[\mathfrak{m}]$ instead of the localization $\pi_{C^{\mathfrak{p}},\mathfrak{m}}$. 

Consider the following two short exact sequences of $\mathbb{T}(C^{\mathfrak{p}})_{\mathfrak{m}}$-modules
\begin{equation}\label{shortes1}
0 \to \pi_{C^{\mathfrak{p}},\mathfrak{m}}[\mathfrak{m}] \overset{i}{\to} \pi_{C^{\mathfrak{p}},\mathfrak{m}} \to \pi_{C^{\mathfrak{p}},\mathfrak{m}}/ \pi_{C^{\mathfrak{p}},\mathfrak{m}}[\mathfrak{m}] \to 0
\end{equation}
and
\begin{equation}\label{shortes2}
0 \to \pi_{C^{\mathfrak{p}},\mathfrak{m}}/ \pi_{C^{\mathfrak{p}},\mathfrak{m}}[\mathfrak{m}] \overset{j}{\to} \prod _{i=1}^{r}\pi_{C^{\mathfrak{p}},\mathfrak{m}} \to Q \to 0
\end{equation}
where $r$ is an integer such that there exists a sequence of generators $f_1,\ldots,f_r$ of the ideal $\mathfrak{m}\mathbb{T}(C^{\mathfrak{p}})_{\mathfrak{m}}$, $j$ induced by mapping $x\in \pi_{C^{\mathfrak{p}},\mathfrak{m}}$ to $(f_1x,\ldots,f_rx) \in \prod _{i=1}^{r}\pi_{C^{\mathfrak{p}},\mathfrak{m}}$, and $Q$ is by definition the quotient in the second sequence. Let $C$ denote $ \pi_{C^{\mathfrak{p}},\mathfrak{m}}/ \pi_{C^{\mathfrak{p}},\mathfrak{m}}[\mathfrak{m}]$. We have two corresponding long exact sequences
\begin{equation}\label{longes1}
     \ldots \to H^{n-1}_{\text{ét}}(\mathbb{P}_{\mathbb{C}_p}^{n-1}, \mathcal{F}_{\pi_{C^{\mathfrak{p}},\mathfrak{m}}[\mathfrak{m}]}) \overset{i_*}{\to} H^{n-1}_{\text{ét}}(\mathbb{P}_{\mathbb{C}_p}^{n-1}, \mathcal{F}_{\pi_{C^{\mathfrak{p}},\mathfrak{m}}}) \to H^{n-1}_{\text{ét}}(\mathbb{P}_{\mathbb{C}_p}^{n-1}, \mathcal{F}_{C}) \to \ldots
\end{equation}
and
\begin{equation}\label{longes2}
    \ldots \to H^{n-1}_{\text{ét}}(\mathbb{P}_{\mathbb{C}_p}^{n-1}, \mathcal{F}_{C}) \to \bigoplus_{i=1}^{r}H^{n-1}_{\text{ét}}(\mathbb{P}_{\mathbb{C}_p}^{n-1}, \mathcal{F}_{\pi_{C^{\mathfrak{p}},\mathfrak{m}}}) \to H^{n-1}_{\text{ét}}(\mathbb{P}_{\mathbb{C}_p}^{n-1}, \mathcal{F}_{Q}) \to \ldots
\end{equation}

Now for simplicity, $\pi_{C^{\mathfrak{p}}}$ will be denoted by $\pi$.
\begin{lemma}
The natural map $\pi \rightarrow \pi_{\mathfrak{m}}$ induces an isomorphism $\pi[\mathfrak{m}] \xrightarrow{\sim} \pi_{\mathfrak{m}}[\mathfrak{m}]$.
\end{lemma}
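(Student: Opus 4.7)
The plan is to reduce to finite level in $U\subseteq\mathrm{GL}_n(F_\mathfrak{p})$ and exploit that at each finite level the Hecke algebra $\mathbb{T}$ acts through a finite $\mathbb{Z}_p$-algebra, so that the module decomposes into Hecke eigen-blocks. First I would write $\pi = \varinjlim_U \pi^U$ where $U$ runs over compact open subgroups of $\mathrm{GL}_n(F_\mathfrak{p})$. The crucial observation is that all generators of $\mathbb{T}$ are Hecke operators at places in $\mathcal{P}$, which are away from $\mathfrak{p}$, hence they commute with the $\mathrm{GL}_n(F_\mathfrak{p})$-action and preserve each $\pi^U$. Using that localization is exact and commutes with filtered colimits, this reduces the statement to the finite-level equality $\pi^U[\mathfrak{m}] = (\pi^U)_\mathfrak{m}[\mathfrak{m}]$, the identification being via the natural map $\pi^U \to (\pi^U)_\mathfrak{m}$.

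At fixed level $U$, I would use that $G'(\mathbb{R})$ is compact modulo centre to see that the double quotient $G'(F)\backslash G'(\mathbb{A}_{F,f})/(U\times \mathcal{O}_{F_\mathfrak{p}}^\times \times C^\mathfrak{p})$ is finite, so $\pi^U$ is a cofinitely generated $\mathbb{Z}_p$-module (its Pontryagin dual is finitely generated). Therefore the image $T^U$ of $\mathbb{T}$ in $\mathrm{End}_{\mathbb{Z}_p}(\pi^U)$ is a finite, hence semi-local Noetherian, $\mathbb{Z}_p$-algebra. Writing $T^U$ as the product of its localizations at its finitely many maximal ideals yields a corresponding direct sum decomposition $\pi^U = \bigoplus_\mathfrak{n} \pi^U_\mathfrak{n}$, and the localization $(\pi^U)_\mathfrak{m}$ is precisely the sum of those $\pi^U_\mathfrak{n}$ for which $\mathfrak{n}$ contracts to $\mathfrak{m}$ in $\mathbb{T}$.

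To conclude, observe that on any summand $\pi^U_\mathfrak{n}$ with $\mathfrak{n} \cap \mathbb{T} \neq \mathfrak{m}$, some element of $\mathfrak{m}$ maps outside $\mathfrak{n}$ and thus acts invertibly; consequently the $\mathfrak{m}$-torsion in $\pi^U$ sits entirely inside $(\pi^U)_\mathfrak{m}$, giving $\pi^U[\mathfrak{m}] = (\pi^U)_\mathfrak{m}[\mathfrak{m}]$. Taking the filtered colimit over $U$ will then yield the desired isomorphism $\pi[\mathfrak{m}] \xrightarrow{\sim} \pi_\mathfrak{m}[\mathfrak{m}]$.

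The delicate point to check carefully will be the compatibility of taking $U$-invariants with localization at $\mathfrak{m}$, needed to justify the identifications $\pi[\mathfrak{m}] = \varinjlim_U \pi^U[\mathfrak{m}]$ and $\pi_\mathfrak{m}[\mathfrak{m}] = \varinjlim_U (\pi^U)_\mathfrak{m}[\mathfrak{m}]$; once this book-keeping is in place the rest is standard commutative algebra for semi-local Noetherian rings.
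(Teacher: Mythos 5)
Your strategy---reduce to finite level, where the Hecke action factors through a ``small'' ring, and split off the $\mathfrak{m}$-block---is sound, and it is genuinely different in execution from the paper's proof, which is a direct element chase: there, injectivity follows in one line from writing $rt+m=1$ with $t\notin\mathfrak{m}$, $m\in\mathfrak{m}$ (no finiteness needed), and surjectivity is proved by producing, for a given $f/s\in\pi_{\mathfrak{m}}[\mathfrak{m}]$, a single $t\notin\mathfrak{m}$ with $t\mathfrak{m}f=0$ and then correcting by an element congruent to a unit modulo $\mathfrak{m}$. Both arguments ultimately rest on the same input, namely that $\mathbb{T}$ acts on each finite-level, finite-exponent piece $C^0(\cdots,\mathbb{Z}/p^r\mathbb{Z})$ through a finite ring. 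Your structural version is arguably cleaner once set up, since $(\pi^U)_{\mathfrak{m}}$ becomes a direct summand of $\pi^U$ and the identification of the $\mathfrak{m}$-torsion is then immediate; the paper's version avoids any discussion of block decompositions.

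Two points in your write-up need more care. First, the image $T^U$ of $\mathbb{T}$ in $\mathrm{End}_{\mathbb{Z}_p}(\pi^U)$ is only a $\mathbb{Z}$-subalgebra of (something isomorphic to) $M_d(\mathbb{Z}_p)$; cofinite generation of $\pi^U$ does not make $T^U$ finite over $\mathbb{Z}_p$, nor even Noetherian or semi-local (compare $\mathbb{Z}\subseteq\mathbb{Z}_p$ acting on $\mathbb{Q}_p/\mathbb{Z}_p$, or generators acting through algebraically independent elements of $\mathbb{Z}_p$). You should replace $T^U$ by the $\mathbb{Z}_p$-subalgebra it generates, which \emph{is} module-finite over $\mathbb{Z}_p$ and hence a product of local rings---the action extends since $\pi^U$ is $p$-power torsion---or, as the paper does, pass further to the pieces $\pi^U[p^r]$, where the image of $\mathbb{T}$ is literally a finite ring. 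One then checks that localizing at $\mathfrak{m}\subseteq\mathbb{T}$ selects exactly the blocks $\mathfrak{n}$ with $\mathfrak{n}\cap\mathbb{T}=\mathfrak{m}$, because elements of $\mathbb{T}\setminus\mathfrak{m}$ become units on those blocks while each other block is killed by inverting a suitable element of $\mathbb{T}\setminus\mathfrak{m}$ acting locally nilpotently. Second, the ``delicate point'' you defer is genuinely delicate: $\mathfrak{m}$ is not a finitely generated ideal of $\mathbb{T}$ (a polynomial ring in infinitely many variables), so $M\mapsto M[\mathfrak{m}]$ does not commute with filtered colimits in general. It does here only because every $\mathfrak{m}$-torsion element is $p$-torsion and the image of $\mathfrak{m}$ in $\mathrm{End}(\pi^U[p^r])$ is finite, so finitely many elements of $\mathfrak{m}$ suffice to test torsion at each level---precisely the device used in the paper's surjectivity argument. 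With these two repairs your proof goes through.
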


\begin{proof}
Suppose $f\in \pi[\mathfrak{m}]$ maps to $0$ in $\pi_\mathfrak{m}$, then there exists $t \notin \mathfrak{m}$ such that $tf=0$. But $\mathfrak{m}$ is a maximal ideal, so there exist $r\in \mathbb{T}$ and $m\in \mathfrak{m}$ satisfying $rt+m=1$, and we deduce that $f= rtf+mf=0$ since $f$ is $\mathfrak{m}$-torsion. To show surjectivity, let $f/s \in \pi_\mathfrak{m}[\mathfrak{m}]$. As $$\pi = \varinjlim_{K_\mathfrak{p}}\varinjlim_{r}C^0(G'(F)\backslash G'(\mathbb{A}_{F,f})/(K_\mathfrak{p}\times \mathcal{O}^\times_{F_\mathfrak{p}}  \times C^{\mathfrak{p}} ), \mathbb{Z}/p^r\mathbb{Z})$$
we may assume that $f$ belongs to a member indexed by $K_\mathfrak{p}$ and $r$ above, which is Hecke stable and finite as a set; denote it by $\pi_{K_\mathfrak{p},r}$. Then the image of $\mathbb{T}$ (resp. $\mathfrak{m}$) in $\text{End}(\pi_{K_\mathfrak{p},r})$ is a finite ring (resp. ideal) and we choose $m_1,\ldots,m_l \in \mathfrak{m}$ so that their images form a set equal to the image of $\mathfrak{m}$. By the assumption that $f/s \in \pi_\mathfrak{m}[\mathfrak{m}]$, there exists an element $t_i \in \mathbb{T}$ with $t_i \notin \mathfrak{m}$ for each $1 \leq i \leq l$ such that $t_im_if = 0$. Let $t=t_1\ldots t_l$, so that $tmf=0$ for every $m\in \mathfrak{m}$. Moreover there exists $h\in \mathbb{T}$ satisfying $1-hst \in \mathfrak{m}$. Taking $g=fht$, one verifies that $g\in \pi[\mathfrak{m}]$ and that $f/s$ is the image of $g$ under the localization map.
\end{proof}

\begin{lemma}\label{flatness}
Assume that $\pi_{\mathfrak{m}}^\vee= \mathrm{Hom}_{\mathbb{Z}_p}(\pi_{\mathfrak{m}}, \mathbb{Q}_p/\mathbb{Z}_p)$ is flat over $\mathbb{T}(C^{\mathfrak{p}})_{\mathfrak{m}}$ (see the remark below). Then for all $r \geq 1$, we have
$$\pi_{\mathfrak{m}}[\mathfrak{m}^{r+1}]/\pi_{\mathfrak{m}}[\mathfrak{m}^r] \cong \bigoplus_s \pi[\mathfrak{m}]$$ where $s = \rm{dim}_{\mathbb{T}/\mathfrak{m}} (\mathfrak{m}^r/\mathfrak{m}^{r+1})$.
\end{lemma}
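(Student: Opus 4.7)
The plan is to pass to Pontryagin duals, where the flatness hypothesis becomes directly usable, and then dualize back. Set $N := \pi_\mathfrak{m}^\vee = \mathrm{Hom}_{\mathbb{Z}_p}(\pi_\mathfrak{m}, \mathbb{Q}_p/\mathbb{Z}_p)$, a compact $\mathbb{T}(C^\mathfrak{p})_\mathfrak{m}$-module whose Pontryagin dual recovers $\pi_\mathfrak{m}$. The first observation I would record is that for each $r \geq 0$,
$$\pi_\mathfrak{m}[\mathfrak{m}^r] \;\cong\; (N/\mathfrak{m}^r N)^\vee,$$
because a map $f\colon N \to \mathbb{Q}_p/\mathbb{Z}_p$ is annihilated by $\mathfrak{m}^r$ in the contragredient action precisely when it factors through $N/\mathfrak{m}^r N$; this identification intertwines the $\mathbb{T}(C^\mathfrak{p})_\mathfrak{m}$-actions on both sides.

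Next I would apply Pontryagin duality, which is exact on the relevant category of discrete $p$-primary torsion modules and their compact duals, to the tautological sequence
$$0 \to \mathfrak{m}^r N/\mathfrak{m}^{r+1} N \to N/\mathfrak{m}^{r+1} N \to N/\mathfrak{m}^r N \to 0,$$
obtaining
$$0 \to \pi_\mathfrak{m}[\mathfrak{m}^r] \to \pi_\mathfrak{m}[\mathfrak{m}^{r+1}] \to (\mathfrak{m}^r N/\mathfrak{m}^{r+1} N)^\vee \to 0.$$
Hence $\pi_\mathfrak{m}[\mathfrak{m}^{r+1}]/\pi_\mathfrak{m}[\mathfrak{m}^r] \cong (\mathfrak{m}^r N/\mathfrak{m}^{r+1} N)^\vee$, and the whole problem reduces to computing the graded piece $\mathfrak{m}^r N/\mathfrak{m}^{r+1} N$.

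This is where flatness enters. Tensoring $0 \to \mathfrak{m}^r/\mathfrak{m}^{r+1} \to \mathbb{T}(C^\mathfrak{p})_\mathfrak{m}/\mathfrak{m}^{r+1} \to \mathbb{T}(C^\mathfrak{p})_\mathfrak{m}/\mathfrak{m}^r \to 0$ with the flat module $N$, the vanishing of $\mathrm{Tor}^1_{\mathbb{T}(C^\mathfrak{p})_\mathfrak{m}}(\mathbb{T}(C^\mathfrak{p})_\mathfrak{m}/\mathfrak{m}^r, N)$ gives
$$\mathfrak{m}^r N/\mathfrak{m}^{r+1} N \;\cong\; (\mathfrak{m}^r/\mathfrak{m}^{r+1}) \otimes_{\mathbb{T}(C^\mathfrak{p})_\mathfrak{m}} N \;\cong\; (\mathfrak{m}^r/\mathfrak{m}^{r+1}) \otimes_{\mathbb{T}/\mathfrak{m}} (N/\mathfrak{m} N),$$
the last step because $\mathfrak{m}^r/\mathfrak{m}^{r+1}$ is already killed by $\mathfrak{m}$. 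Choosing a $\mathbb{T}/\mathfrak{m}$-basis of $\mathfrak{m}^r/\mathfrak{m}^{r+1}$ of size $s$ exhibits this as $(N/\mathfrak{m} N)^{\oplus s}$. Dualizing back and invoking the preceding lemma $\pi[\mathfrak{m}] \xrightarrow{\sim} \pi_\mathfrak{m}[\mathfrak{m}] \cong (N/\mathfrak{m} N)^\vee$ finally yields
$$\pi_\mathfrak{m}[\mathfrak{m}^{r+1}]/\pi_\mathfrak{m}[\mathfrak{m}^r] \;\cong\; \bigl((N/\mathfrak{m} N)^{\oplus s}\bigr)^\vee \;\cong\; \pi[\mathfrak{m}]^{\oplus s}.$$
The argument is short and the only real care needed is the duality bookkeeping: one should check that all the isomorphisms above are $\mathbb{T}(C^\mathfrak{p})_\mathfrak{m}$-linear (automatic since Pontryagin duality sends the action to its contragredient functorially) and record that flatness is invoked only through a single $\mathrm{Tor}^1$ vanishing, not in any stronger form.
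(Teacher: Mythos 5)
Your proposal is correct and follows essentially the same route as the paper's proof: pass to Pontryagin duals, identify $(\pi_{\mathfrak{m}}[\mathfrak{m}^{r+1}]/\pi_{\mathfrak{m}}[\mathfrak{m}^r])^\vee$ with $\mathfrak{m}^r\pi_{\mathfrak{m}}^\vee/\mathfrak{m}^{r+1}\pi_{\mathfrak{m}}^\vee$, use flatness to rewrite this as $(\mathfrak{m}^r/\mathfrak{m}^{r+1})\otimes_{\mathbb{T}/\mathfrak{m}}(\pi_{\mathfrak{m}}[\mathfrak{m}])^\vee$, and dualize back using $\pi[\mathfrak{m}]\cong\pi_{\mathfrak{m}}[\mathfrak{m}]$. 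Your explicit invocation of the $\mathrm{Tor}^1$ vanishing is just a slightly more careful phrasing of the same flatness step.
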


\begin{proof}
Since taking dual (with respect to $\mathbb{Q}_p/\mathbb{Z}_p$) is an exact functor, we have
\begin{align*}
    (\pi_{\mathfrak{m}}[\mathfrak{m}^{r+1}]/\pi_{\mathfrak{m}}[\mathfrak{m}^r])^\vee \cong & Ker(\pi_{\mathfrak{m}}[\mathfrak{m}^{r+1}]^\vee \to \pi_{\mathfrak{m}}[\mathfrak{m}^{r}]^\vee)  \\ \cong  
    & Ker(\pi_{\mathfrak{m}}^\vee/\mathfrak{m}^{r+1}\pi_{\mathfrak{m}}^\vee \to \pi_{\mathfrak{m}}^\vee/\mathfrak{m}^{r}\pi_{\mathfrak{m}}^\vee) \\
    \cong & \mathfrak{m}^{r}\pi_{\mathfrak{m}}^\vee/\mathfrak{m}^{r+1}\pi_{\mathfrak{m}}^\vee.
\end{align*}
But $\pi_{\mathfrak{m}}^\vee$ is assumed to be flat over $\mathbb{T}(C^{\mathfrak{p}})_{\mathfrak{m}}$, we decuce that
\begin{align*}
    \mathfrak{m}^{r}\pi_{\mathfrak{m}}^\vee/\mathfrak{m}^{r+1}\pi_{\mathfrak{m}}^\vee & \cong \pi_{\mathfrak{m}}^\vee\otimes(\mathfrak{m}^r/\mathfrak{m}^{r+1}) \\
& \cong (\pi_{\mathfrak{m}}^\vee \otimes \mathbb{T}(C^{\mathfrak{p}})_{\mathfrak{m}}/\mathfrak{m})\otimes_{\mathbb{T}(C^{\mathfrak{p}})_{\mathfrak{m}}/\mathfrak{m}} (\mathfrak{m}^r/\mathfrak{m}^{r+1}) \\
& \cong (\pi_{\mathfrak{m}}[\mathfrak{m}])^\vee \otimes_{\mathbb{T}(C^{\mathfrak{p}})_{\mathfrak{m}}/\mathfrak{m}} (\mathfrak{m}^r/\mathfrak{m}^{r+1}).
\end{align*}
From the isomorphism $\pi[\mathfrak{m}] = \pi_{\mathfrak{m}}[\mathfrak{m}]$, the last term above can be identified with
$$(\pi_{\mathfrak{m}}[\mathfrak{m}])^\vee \otimes_{\mathbb{T}(C^{\mathfrak{p}})_{\mathfrak{m}}/\mathfrak{m}} (\mathfrak{m}^r/\mathfrak{m}^{r+1}) \cong (\pi[\mathfrak{m}])^\vee \otimes_{\mathbb{T}/\mathfrak{m}} (\mathfrak{m}^r/\mathfrak{m}^{r+1}) \cong \bigoplus_{s}(\pi[\mathfrak{m}])^\vee$$
where $s = \rm{dim}_{\mathbb{T}/\mathfrak{m}} (\mathfrak{m}^r/\mathfrak{m}^{r+1})$. Combining all these isomorphisms and taking duals again, we have $\pi_{\mathfrak{m}}[\mathfrak{m}^{r+1}]/\pi_{\mathfrak{m}}[\mathfrak{m}^r] \cong \big((\pi[\mathfrak{m}])^\vee \otimes_{\mathbb{T}/\mathfrak{m}} (\mathfrak{m}^r/\mathfrak{m}^{r+1})\big)^\vee \cong (\bigoplus_{s}(\pi[\mathfrak{m}])^\vee)^\vee \cong \bigoplus_s \pi[\mathfrak{m}]$ where $s = \rm{dim}_{\mathbb{T}/\mathfrak{m}} (\mathfrak{m}^r/\mathfrak{m}^{r+1})$, as desired.
\end{proof}

\begin{remark}\label{Gee-Newton}
In \cite{GN16}, Theorem B, Gee-Newton proved among other things that under certain assumptions on the Gelfand-Kirillov dimension of $\pi$, the corresponding flatness result for the group $\text{PGL}_n$ indeed holds, so our flatness assumption above seems to be reasonable and one might expect a proof of it under similar conditions as in \cite{GN16} or directly via the Jacquet-Langlands correspondence using Theorem B of \cite{GN16}.
\end{remark}

Assume from now on that $\pi_{\mathfrak{m}}^\vee$ is flat over $\mathbb{T}(C^{\mathfrak{p}})_{\mathfrak{m}}$. As a consequence of the above lemma, for each $r \geq 1$ the sheaf $\mathcal{F}_{{\pi_{\mathfrak{m}}[\mathfrak{m}^{r+1}]/\pi_{\mathfrak{m}}[\mathfrak{m}^r]}}$ and hence the cohomology group\\ $H^{n-1}_{\text{ét}}(\mathbb{P}_{\mathbb{C}_p}^{n-1}, \mathcal{F}_{{\pi_{\mathfrak{m}}[\mathfrak{m}^{r+1}]/\pi_{\mathfrak{m}}[\mathfrak{m}^r]}})$, are determined by the torsion part $\pi[\mathfrak{m}]$. We also note that

\begin{lemma}\label{powertorsion}
The $\mathbb{T}(C^{\mathfrak{p}})_{\mathfrak{m}}$-module $\pi_{\mathfrak{m}}$ is $\mathfrak{m}$ power torsion: $\pi_{\mathfrak{m}}=   \displaystyle{\varinjlim_{k}} \, \pi_\mathfrak{m}[\mathfrak{m}^k] $. Moreover, there exists an integer $N > 0$ such that $\bar{\sigma}|_{\mathrm{Gal}_{F_{\mathfrak{p}}}}$ appears in $H^{n-1}_{\text{ét}}(\mathbb{P}_{\mathbb{C}_p}^{n-1}, \mathcal{F}_{\pi_{\mathfrak{m}}[\mathfrak{m}^N]})[\mathfrak{m}]$.
\end{lemma}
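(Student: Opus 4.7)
The plan is to separate the two claims. The $\mathfrak{m}$-power torsion statement is proved by exploiting admissibility of $\pi$ together with finiteness at finite level, and the second statement by combining this with non-vanishing of the Shimura-variety cohomology together with the $\sigma$-typic decomposition coming from Proposition \ref{typic}.

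For the first assertion, I write
$$\pi_{C^\mathfrak{p}} \;=\; \varinjlim_{K,r} \pi^{K}[p^r],$$
where $K$ ranges over compact open subgroups of $\mathrm{GL}_n(F_\mathfrak{p})$ and $\pi^{K}[p^r]$ is a finite $\mathbb{Z}_p$-module preserved by the Hecke algebra $\mathbb{T}$. Each localization $(\pi^{K}[p^r])_\mathfrak{m}$ is finite over $\mathbb{T}(C^\mathfrak{p})_\mathfrak{m}$ and annihilated by $p^r$, so its Hecke action factors through a local Artinian quotient of $\mathbb{T}(C^\mathfrak{p})_\mathfrak{m}$ and is therefore killed by some $\mathfrak{m}^k$. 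Passing to the colimit gives $\pi_\mathfrak{m} = \varinjlim_k \pi_\mathfrak{m}[\mathfrak{m}^k]$.

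For the existence of $N$, the key input is that Scholze's functor $\pi' \mapsto \mathcal{F}_{\pi'}$ is exact and commutes with filtered colimits, while étale cohomology in degree $n-1$ on the quasi-compact quasi-separated adic space $\mathbb{P}^{n-1}_{\mathbb{C}_p}$ preserves filtered colimits of torsion sheaves. Combined with the first assertion this yields
$$H^{n-1}_{\text{ét}}(\mathbb{P}_{\mathbb{C}_p}^{n-1},\,\mathcal{F}_{\pi_\mathfrak{m}}) \;=\; \varinjlim_{k}\, H^{n-1}_{\text{ét}}(\mathbb{P}_{\mathbb{C}_p}^{n-1},\,\mathcal{F}_{\pi_\mathfrak{m}[\mathfrak{m}^k]}).$$
By Theorem \ref{cmp} the left-hand side is $\rho^{n-1}_{C^\mathfrak{p},\mathfrak{m}}$, which is non-zero because $\bar\sigma$ is modular (the middle-degree Shimura cohomology is non-trivial after localizing at $\mathfrak{m}$). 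Hence some smallest $N$ gives $H^{n-1}_{\text{ét}}(\mathbb{P}_{\mathbb{C}_p}^{n-1},\,\mathcal{F}_{\pi_\mathfrak{m}[\mathfrak{m}^N]}) \neq 0$; being a non-zero $\mathfrak{m}^N$-torsion module, it has non-trivial $\mathfrak{m}$-torsion.

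To identify the $\mathfrak{m}$-torsion, I would invoke the $\sigma$-typic decomposition of Proposition \ref{typic}: $\rho^{n-1}_{C^\mathfrak{p},\mathfrak{m}} \cong \sigma \otimes_{\mathbb{T}(C^\mathfrak{p})_\mathfrak{m}} \rho[\sigma]$. Since $\sigma$ is realised on the free module $\mathbb{T}(C^\mathfrak{p})_\mathfrak{m}^{n}$, taking $\mathfrak{m}$-torsion and restricting to $\mathrm{Gal}_{F_\mathfrak{p}}$ gives
$$\bigl(\rho^{n-1}_{C^\mathfrak{p},\mathfrak{m}}\bigr)[\mathfrak{m}]\bigl|_{\mathrm{Gal}_{F_\mathfrak{p}}} \;\cong\; \bar\sigma|_{\mathrm{Gal}_{F_\mathfrak{p}}} \otimes_{\mathbb{T}/\mathfrak{m}} \rho[\sigma][\mathfrak{m}].$$
Since $\rho[\sigma]$ is non-zero (else the previous display would vanish) and is itself $\mathfrak{m}$-power torsion, $\rho[\sigma][\mathfrak{m}] \neq 0$, and any non-zero vector in it yields a subrepresentation isomorphic to $\bar\sigma|_{\mathrm{Gal}_{F_\mathfrak{p}}}$ inside the $\mathfrak{m}$-torsion of $H^{n-1}_{\text{ét}}(\mathbb{P}_{\mathbb{C}_p}^{n-1},\,\mathcal{F}_{\pi_\mathfrak{m}[\mathfrak{m}^N]})$. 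The main technical point, and the only step that is not formal, is the exchange of $H^{n-1}_{\text{ét}}$ with the filtered colimit $\varinjlim_k \mathcal{F}_{\pi_\mathfrak{m}[\mathfrak{m}^k]} = \mathcal{F}_{\pi_\mathfrak{m}}$; this should be accessible from Scholze's explicit cohomology computation in \cite{Sch18} via a quasi-compact affinoid cover pulled back from $\mathcal{M}_{\mathrm{LT},\infty}$, reducing the question to cohomology of pro-finite sets with discrete torsion coefficients where the colimit exchange is immediate.
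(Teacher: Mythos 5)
Your proposal follows essentially the same route as the paper: the first claim is reduced to finite levels $\pi_{K_\mathfrak{p},r}$ where the Hecke image is a finite, hence Artinian, local ring, and the second claim is obtained by commuting $H^{n-1}_{\text{ét}}$ with the filtered colimit $\varinjlim_k \pi_\mathfrak{m}[\mathfrak{m}^k]$ and invoking the $\sigma$-typic decomposition of $H^{n-1}_{\text{ét}}(\mathbb{P}^{n-1}_{\mathbb{C}_p},\mathcal{F}_{\pi_\mathfrak{m}})[\mathfrak{m}] \cong \bar\sigma|_{\mathrm{Gal}_{F_\mathfrak{p}}}\otimes\rho[\sigma][\mathfrak{m}]$. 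The one step to make explicit is the descent of the finite-dimensional copy of $\bar\sigma|_{\mathrm{Gal}_{F_\mathfrak{p}}}$ from the colimit to a finite stage $N$ (the paper does this by noting the transition maps in the direct system are injective); your intermediate observation that the smallest $N$ with non-vanishing cohomology has non-trivial $\mathfrak{m}$-torsion does not by itself produce a copy of $\bar\sigma|_{\mathrm{Gal}_{F_\mathfrak{p}}}$.
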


\begin{proof}
By the equality $$\pi = \varinjlim_{K_\mathfrak{p}}C^0(G'(F)\backslash G'(\mathbb{A}_{F,f})/(K_\mathfrak{p}\times \mathcal{O}^\times_{F_\mathfrak{p}}  \times C^{\mathfrak{p}} ), \mathbb{Q}_p/\mathbb{Z}_p)$$ 
and the finiteness of the cardinality of $G'(F)\backslash G'(\mathbb{A}_{F,f})/(K_\mathfrak{p}\times \mathcal{O}^\times_{F_\mathfrak{p}}  \times C^{\mathfrak{p}} )$ it suffices to show that for each compact open subgroup $K_\mathfrak{p} \subseteq G(F_\mathfrak{p})$ and for each positive integer $r$, the localization at $\mathfrak{m}$ of $$\pi_{K_\mathfrak{p},r}:= C^0(G'(F)\backslash G'(\mathbb{A}_{F,f})/(K_\mathfrak{p}\times \mathcal{O}^\times_{F_\mathfrak{p}}  \times C^{\mathfrak{p}} ), \mathbb{Z}/p^r\mathbb{Z})$$ is $\mathfrak{m}$ power torsion. Let $\tilde{\mathbb{T}}$ be the image of the map $\phi: \mathbb{T} \to \text{End}(\pi_{K_\mathfrak{p}, r})$ and $\tilde{\mathfrak{m}} \subseteq \tilde{\mathbb{T}}$ be the image of $ \mathfrak{m}$ under $\phi$. We distinguish two cases:

\begin{itemize}
    \item $\tilde{\mathfrak{m}}$ is equal to $\tilde{\mathbb{T}}$, which is equivalent to $\text{ker}(\phi) \nsubseteq \mathfrak{m}$. In this case it is direct to check that $(\pi_{K_\mathfrak{p}, r})_\mathfrak{m} =0$.
    \item $\text{ker}(\phi) \subseteq \mathfrak{m}$, in which case $\tilde{\mathfrak{m}}$ is a maximal ideal of $\tilde{\mathbb{T}}$. As $\pi_{K_\mathfrak{p},r}$ is finite as a set it follows that $\tilde{\mathbb{T}}$ is a finite ring and hence an Artin ring. Thus $\tilde{\mathbb{T}}_{\tilde{\mathfrak{m}}}$ is a local Artin ring and there exists an integer $M > 0$ such that $\tilde{\mathfrak{m}}^M\tilde{\mathbb{T}}_{\tilde{\mathfrak{m}}}=0$. Now $\mathfrak{m}^M(\pi_{K_\mathfrak{p}, r})_\mathfrak{m} = \tilde{\mathfrak{m}}^M(\tilde{\mathbb{T}}_{\tilde{\mathfrak{m}}}\otimes_{\tilde{\mathbb{T}}}\pi_{K_\mathfrak{p}, r}) =0$.
\end{itemize}

For the second statement, we first note that from the exactness of $\pi \mapsto \mathcal{F}_\pi$ we have an injective morphism of sheaves
$$\varinjlim_{k}\mathcal{F}_{\pi_\mathfrak{m}[\mathfrak{m}^k]} \to \mathcal{F}_{\pi_\mathfrak{m}},$$ which is also surjective by checking stalks using the equality $\mathcal{F}_{\pi, \bar{x}} = \pi$ and the first statement of the lemma. On the other hand, by Proposition 2.8 of \cite{Sch18} and the coherence of the sites $(\mathbb{P}_{\mathbb{C}_p}^{n-1}/K)_{\text{\'et}}$ we deduce 
$$H^{n-1}_{\text{ét}}(\mathbb{P}_{\mathbb{C}_p}^{n-1}, \mathcal{F}_{\pi_{\mathfrak{m}}}) = \varinjlim_kH^{n-1}_{\text{ét}}(\mathbb{P}_{\mathbb{C}_p}^{n-1}, \mathcal{F}_{\pi_{\mathfrak{m}}[\mathfrak{m}^k]})$$
with injective transition maps in the direct system. As $\bar{\sigma}|_{\mathrm{Gal}_{F_{\mathfrak{p}}}}$ is finite dimensional and occurs in $H^{n-1}_{\text{ét}}(\mathbb{P}_{\mathbb{C}_p}^{n-1}, \mathcal{F}_{\pi_{\mathfrak{m}}})[\mathfrak{m}]=\bar{\sigma}|_{\mathrm{Gal}_{F_{\mathfrak{p}}}}\otimes\rho_{C^{\mathfrak{p}}}[\mathfrak{m}]$ it must occur in some $H^{n-1}_{\text{ét}}(\mathbb{P}_{\mathbb{C}_p}^{n-1}, \mathcal{F}_{\pi_{\mathfrak{m}}[\mathfrak{m}^N]})[\mathfrak{m}]$ for some large $N$.

\end{proof}

Now we can prove

\begin{theorem}\label{torsionthm1}
Assume that $\pi_{\mathfrak{m}}^\vee$ is flat over $\mathbb{T}(C^{\mathfrak{p}})_{\mathfrak{m}}$ and that $\bar{\sigma}|_{\mathrm{Gal}_{F_{\mathfrak{p}}}}$ is semisimple with distinct irreducible factors. Then $\mathrm{JH}(\bar{\sigma}|_{\mathrm{Gal}_{F_{\mathfrak{p}}}})$ is a subset of the set of irreducible subquotients $H^{n-1}_{\text{ét}}(\mathbb{P}_{\mathbb{C}_p}^{n-1}, \mathcal{F}_{\pi[\mathfrak{m}]})$.
\end{theorem}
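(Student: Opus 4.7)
The plan is to combine the $\mathfrak{m}$-adic filtration of $\pi_\mathfrak{m}$ by its $\mathfrak{m}^r$-torsion, whose graded pieces are by Lemma \ref{flatness} direct sums of copies of $\pi[\mathfrak{m}]$, with the fact established in Lemma \ref{powertorsion} that $\bar\sigma|_{\mathrm{Gal}_{F_\mathfrak{p}}}$ already embeds as a subrepresentation of $H^{n-1}_{\text{ét}}(\mathbb{P}_{\mathbb{C}_p}^{n-1}, \mathcal{F}_{\pi_\mathfrak{m}[\mathfrak{m}^N]})$ for some finite $N$. The semisimplicity hypothesis with distinct irreducible constituents ensures that every element of $\mathrm{JH}(\bar\sigma|_{\mathrm{Gal}_{F_\mathfrak{p}}})$ is a direct summand of $\bar\sigma|_{\mathrm{Gal}_{F_\mathfrak{p}}}$, hence automatically an irreducible subquotient of $H^{n-1}_{\text{ét}}(\mathbb{P}_{\mathbb{C}_p}^{n-1}, \mathcal{F}_{\pi_\mathfrak{m}[\mathfrak{m}^N]})$; it thus suffices to propagate control of irreducible subquotients down the $\mathfrak{m}$-adic filtration to its first piece.

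The core step is then to prove by induction on $r \geq 1$ that every irreducible $\mathrm{Gal}_{F_\mathfrak{p}}$-subquotient of $H^{n-1}_{\text{ét}}(\mathbb{P}_{\mathbb{C}_p}^{n-1}, \mathcal{F}_{\pi_\mathfrak{m}[\mathfrak{m}^r]})$ is also an irreducible subquotient of $H^{n-1}_{\text{ét}}(\mathbb{P}_{\mathbb{C}_p}^{n-1}, \mathcal{F}_{\pi[\mathfrak{m}]})$. The base case $r = 1$ is immediate from the identification $\pi_\mathfrak{m}[\mathfrak{m}] = \pi[\mathfrak{m}]$ proved above. For the inductive step, apply the exact functor $\pi' \mapsto \mathcal{F}_{\pi'}$ to the short exact sequence
$$0 \to \pi_\mathfrak{m}[\mathfrak{m}^r] \to \pi_\mathfrak{m}[\mathfrak{m}^{r+1}] \to \bigoplus_s \pi[\mathfrak{m}] \to 0$$
supplied by Lemma \ref{flatness} (with $s$ finite by Noetherianness of $\mathbb{T}(C^\mathfrak{p})_\mathfrak{m}$) and take the long exact sequence of étale cohomology on $\mathbb{P}_{\mathbb{C}_p}^{n-1}$. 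This exhibits $H^{n-1}(\mathcal{F}_{\pi_\mathfrak{m}[\mathfrak{m}^{r+1}]})$ as an extension of a subrepresentation of $\bigoplus_s H^{n-1}(\mathcal{F}_{\pi[\mathfrak{m}]})$ by a quotient of $H^{n-1}(\mathcal{F}_{\pi_\mathfrak{m}[\mathfrak{m}^r]})$. An elementary diagram chase shows that every irreducible subquotient of an extension $0 \to A \to B \to C \to 0$ is a subquotient of either $A$ or $C$, so together with the inductive hypothesis the step closes. Specializing to $r = N$ and invoking Lemma \ref{powertorsion} places every irreducible component of $\bar\sigma|_{\mathrm{Gal}_{F_\mathfrak{p}}}$ among the irreducible subquotients of $H^{n-1}(\mathcal{F}_{\pi[\mathfrak{m}]})$, which is the desired conclusion.

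The main obstacle is not conceptual but rather to verify that the inductive estimate remains valid in the admissible, a priori infinite-dimensional setting of these cohomology groups. This reduces to the elementary subquotient fact stated above, combined with the observation that $H^{n-1}$ commutes with finite direct sums of a single sheaf, so that no subtle issue arises in identifying $H^{n-1}(\mathcal{F}_{\bigoplus_s \pi[\mathfrak{m}]}) = \bigoplus_s H^{n-1}(\mathcal{F}_{\pi[\mathfrak{m}]})$. Once these points are in place, the argument is a direct assembly of Lemma \ref{flatness}, Lemma \ref{powertorsion}, and the exactness of Scholze's sheaf construction recalled in Section 4.
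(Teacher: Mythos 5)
Your proposal is correct and follows essentially the same route as the paper: both arguments rest on Lemma \ref{flatness} to identify the graded pieces $\pi_{\mathfrak{m}}[\mathfrak{m}^{r+1}]/\pi_{\mathfrak{m}}[\mathfrak{m}^{r}]$ with finite direct sums of copies of $\pi[\mathfrak{m}]$, on Lemma \ref{powertorsion} to place $\bar{\sigma}|_{\mathrm{Gal}_{F_{\mathfrak{p}}}}$ inside $H^{n-1}_{\text{\'et}}(\mathbb{P}_{\mathbb{C}_p}^{n-1},\mathcal{F}_{\pi_{\mathfrak{m}}[\mathfrak{m}^{N}]})$ for some finite $N$, and on an induction along the $\mathfrak{m}$-torsion filtration using the long exact sequences coming from the exactness of $\pi'\mapsto\mathcal{F}_{\pi'}$. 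The only difference is organizational: you run the induction upward over the subobjects $\pi_{\mathfrak{m}}[\mathfrak{m}^{r}]$ and control all irreducible subquotients at once via the elementary extension lemma, whereas the paper runs downward over the quotients $C_{r}=\pi_{N}/\pi_{N}[\mathfrak{m}^{r}]$ and tracks a single copy of the irreducible $\bar{\sigma}|_{\mathrm{Gal}_{F_{\mathfrak{p}}}}$ through an either/or case analysis terminating at $C_{N}=0$; your bookkeeping is slightly cleaner but the mathematical content is the same.
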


\begin{proof}
We may assume that $\bar{\sigma}|_{\mathrm{Gal}_{F_{\mathfrak{p}}}}$ is irreducible.
Pick an $N >0$ such that $\bar{\sigma}|_{\mathrm{Gal}_{F_{\mathfrak{p}}}}$ appears in $H^{n-1}_{\text{ét}}(\mathbb{P}_{\mathbb{C}_p}^{n-1}, \mathcal{F}_{\pi_{\mathfrak{m}}[\mathfrak{m}^N]})[\mathfrak{m}]$ whose existence is guaranteed by the above lemma. Denote $\pi_N := \pi_{\mathfrak{m}}[\mathfrak{m}^N]$ and consider the short exact sequence
$$0 \to \pi_{N}[\mathfrak{m}] \overset{\tilde{i}}{\to} \pi_{N} \to \pi_{N}/ \pi_{N}[\mathfrak{m}] \to 0$$
and its associated long exact sequence 
$$ \ldots \to H^{n-1}_{\text{ét}}(\mathbb{P}_{\mathbb{C}_p}^{n-1}, \mathcal{F}_{\pi_{N}[\mathfrak{m}]}) \overset{\tilde{i}_*}{\to} H^{n-1}_{\text{ét}}(\mathbb{P}_{\mathbb{C}_p}^{n-1}, \mathcal{F}_{\pi_{N}}) \to H^{n-1}_{\text{ét}}(\mathbb{P}_{\mathbb{C}_p}^{n-1}, \mathcal{F}_{C_1}) \to \ldots$$
where $C_1:=\pi_{N}/ \pi_{N}[\mathfrak{m}]$. Since $\pi_{N}[\mathfrak{m}]$ is of $\mathfrak{m}$-torsion, by functoriality the map $\tilde{i}_*$ factors through $H^{n-1}_{\text{ét}}(\mathbb{P}_{\mathbb{C}_p}^{n-1}, \mathcal{F}_{\pi_{N}})[\mathfrak{m}]$ which by assumption contains $\bar{\sigma}|_{\mathrm{Gal}_{F_{\mathfrak{p}}}}$. If the image of $\tilde{i}_*$ contains a copy of $\bar{\sigma}|_{\mathrm{Gal}_{F_{\mathfrak{p}}}}$ in $H^{n-1}_{\text{ét}}(\mathbb{P}_{\mathbb{C}_p}^{n-1}, \mathcal{F}_{\pi_{N}})$ then we are done, as the irreducible representation $\bar{\sigma}|_{\mathrm{Gal}_{F_{\mathfrak{p}}}}$ occurs in the composition series of a quotient of $H^{n-1}_{\text{ét}}(\mathbb{P}_{\mathbb{C}_p}^{n-1}, \mathcal{F}_{\pi_{N}[\mathfrak{m}]})$ (note that $\pi_{N}[\mathfrak{m}] = \pi[\mathfrak{m]})$. Now suppose the contrary so that coker$(\tilde{i}_*)$, containing a copy of $\bar{\sigma}|_{\mathrm{Gal}_{F_{\mathfrak{p}}}}$ (since $\bar{\sigma}|_{\mathrm{Gal}_{F_{\mathfrak{p}}}}$ is assumed to be irreducible), injects into $H^{n-1}_{\text{ét}}(\mathbb{P}_{\mathbb{C}_p}^{n-1}, \mathcal{F}_{{C_1}})$. 

Consider the further filtration
$$0 \to C_1[\mathfrak{m}] \overset{i_1}{\to} C_1 \to C_2 \to 0$$
where $C_2:=C_1/C_1[\mathfrak{m}]$ and one can check that $C_1[\mathfrak{m}]=\pi_{N}[\mathfrak{m}^2]/\pi_{N}[\mathfrak{m}]$ so that $C_2=\pi_{N}/\pi_{N}[\mathfrak{m}^2]$. We have again the long exact sequence
$$ \ldots \to H^{n-1}_{\text{ét}}(\mathbb{P}_{\mathbb{C}_p}^{n-1}, \mathcal{F}_{C_1[\mathfrak{m}]}) \overset{i_{1*}}{\to}  H^{n-1}_{\text{ét}}(\mathbb{P}_{\mathbb{C}_p}^{n-1}, \mathcal{F}_{{C_1}}) \to H^{n-1}_{\text{ét}}(\mathbb{P}_{\mathbb{C}_p}^{n-1}, \mathcal{F}_{C_2}) \to \ldots$$
By the same argument as above, either the image of $i_{1*}$ contains a copy of $\bar{\sigma}|_{\mathrm{Gal}_{F_{\mathfrak{p}}}}$ inside $H^{n-1}_{\text{ét}}(\mathbb{P}_{\mathbb{C}_p}^{n-1}, \mathcal{F}_{{C_1}})$, or, otherwise, coker$(i_{1*})$ contains a copy of $\bar{\sigma}|_{\mathrm{Gal}_{F_{\mathfrak{p}}}}$ and injects into $H^{n-1}_{\text{ét}}(\mathbb{P}_{\mathbb{C}_p}^{n-1}, \mathcal{F}_{C_2})$. Then one defines $C_3:=\pi_{N}/\pi_{N}[\mathfrak{m}^3]$ to continue with the above procedure.

Considering that $\pi_{N}=\pi_{N}[\mathfrak{m}^N]$ (hence $C_N:=\pi_{N}/\pi_{N}[\mathfrak{m}^N]=0$), one concludes by induction that there exists a positive integer $r < N$ such that the image of $i_{r*}$ contains a copy of $\bar{\sigma}|_{\mathrm{Gal}_{F_{\mathfrak{p}}}}$ inside $H^{n-1}_{\text{ét}}(\mathbb{P}_{\mathbb{C}_p}^{n-1}, \mathcal{F}_{{C_r}})$, i.e., $\bar{\sigma}|_{\mathrm{Gal}_{F_{\mathfrak{p}}}}$ is an irreducible sub-representation of a homomorphic image of $H^{n-1}_{\text{ét}}(\mathbb{P}_{\mathbb{C}_p}^{n-1}, \mathcal{F}_{{C_r[\mathfrak{m}]}})$ and therefore appears in the composition series of $$H^{n-1}_{\text{ét}}(\mathbb{P}_{\mathbb{C}_p}^{n-1}, \mathcal{F}_{{C_r[\mathfrak{m}]}}) = H^{n-1}_{\text{ét}}(\mathbb{P}_{\mathbb{C}_p}^{n-1}, \mathcal{F}_{{\pi_{\mathfrak{m}}[\mathfrak{m}^{r+1}]/\pi_{\mathfrak{m}}[\mathfrak{m}^r]}})$$ where we have used the equalities $\pi_N[\mathfrak{m}^i]=\pi[\mathfrak{m}^i]$ for $1 \leq i \leq N$.

So we have proved that $\bar{\sigma}|_{\mathrm{Gal}_{F_{\mathfrak{p}}}}$ appears in the composition series either of\\ $H^{n-1}_{\text{ét}}(\mathbb{P}_{\mathbb{C}_p}^{n-1}, \mathcal{F}_{\pi[\mathfrak{m}]})$ or of $H^{n-1}_{\text{ét}}(\mathbb{P}_{\mathbb{C}_p}^{n-1}, \mathcal{F}_{{\pi_{\mathfrak{m}}[\mathfrak{m}^{r+1}]/\pi_{\mathfrak{m}}[\mathfrak{m}^r]}})$ for some positive integer $r$, but in the latter case one has
$$H^{n-1}_{\text{ét}}(\mathbb{P}_{\mathbb{C}_p}^{n-1}, \mathcal{F}_{{\pi_{\mathfrak{m}}[\mathfrak{m}^{r+1}]/\pi_{\mathfrak{m}}[\mathfrak{m}^r]}}) \cong \bigoplus_s H^{n-1}_{\text{ét}}(\mathbb{P}_{\mathbb{C}_p}^{n-1}, \mathcal{F}_{ \pi[\mathfrak{m}]})$$
from the equality $\pi_{\mathfrak{m}}[\mathfrak{m}^{r+1}]/\pi_{\mathfrak{m}}[\mathfrak{m}^r] \cong \bigoplus_s \pi[\mathfrak{m}]$ (hence $\mathcal{F}_{{\pi_{\mathfrak{m}}[\mathfrak{m}^{r+1}]/\pi_{\mathfrak{m}}[\mathfrak{m}^r]}} \cong \bigoplus_s \mathcal{F}_{ \pi[\mathfrak{m}]}$).

\end{proof}

Now we show that when $\bar{\sigma}|_{\mathrm{Gal}_{F_{\mathfrak{p}}}}$ is irreducible, it is uniquely determined by $\pi[\mathfrak{m}]$ in the sense that we can read off $\bar{\sigma}|_{\mathrm{Gal}_{F_{\mathfrak{p}}}}$ from $H^{n-1}_{\text{ét}}(\mathbb{P}_{\mathbb{C}_p}^{n-1}, \mathcal{F}_{ \pi[\mathfrak{m}]})$.

\begin{lemma}\label{injective}
Assume that $\pi_{\mathfrak{m}}^\vee$ is flat over $\mathbb{T}(C^{\mathfrak{p}})_{\mathfrak{m}}$. Then $H^{i}_{\text{ét}}(\mathbb{P}_{\mathbb{C}_p}^{n-1}, \mathcal{F}_{\pi[\mathfrak{m}]})=0$ for $1 \leq i \leq n-2$, and the map $H^{n-1}_{\text{ét}}(\mathbb{P}_{\mathbb{C}_p}^{n-1}, \mathcal{F}_{\pi_{\mathfrak{m}}[\mathfrak{m}]}) \overset{i_*}{\to} H^{n-1}_{\text{ét}}(\mathbb{P}_{\mathbb{C}_p}^{n-1}, \mathcal{F}_{\pi_{\mathfrak{m}}})[\mathfrak{m}]$ is injective.
\end{lemma}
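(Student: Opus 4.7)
The plan is to combine the Condition with the flatness hypothesis by building a coresolution of $\pi[\mathfrak{m}]$ by copies of $\pi_{\mathfrak{m}}$, then extracting the vanishing and injectivity from the hypercohomology spectral sequence. The two main inputs are (a) the vanishing $H^q(\mathbb{P}^{n-1}_{\mathbb{C}_p}, \mathcal{F}_{\pi_{\mathfrak{m}}}) = 0$ for $q \le n-2$, coming from the Condition alone, and (b) the coresolution itself, coming from flatness of $\pi_{\mathfrak{m}}^\vee$.

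For (a), Theorem \ref{cmp} identifies $H^q(\mathbb{P}^{n-1}_{\mathbb{C}_p}, \mathcal{F}_{\pi_{\mathfrak{m}}})$ with $\varinjlim_U H^q(\mathrm{Sh}_{UC^{\mathfrak{p}},\overline{K}}, \mathbb{Q}_p/\mathbb{Z}_p)_\mathfrak{m}$, and the universal coefficient sequence
$$0 \to H^q(\mathrm{Sh}_{UC^{\mathfrak{p}}},\mathbb{Z}_p)_\mathfrak{m} \otimes \mathbb{Q}_p/\mathbb{Z}_p \to H^q(\mathrm{Sh}_{UC^{\mathfrak{p}}},\mathbb{Q}_p/\mathbb{Z}_p)_\mathfrak{m} \to H^{q+1}(\mathrm{Sh}_{UC^{\mathfrak{p}}},\mathbb{Z}_p)_\mathfrak{m}[p^\infty] \to 0$$
together with the Condition kills everything for $q \le n-3$. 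For $q = n-2$ one needs the $p$-power torsion of $H^{n-1}(\mathrm{Sh}_{UC^{\mathfrak{p}}},\mathbb{Z}_p)_\mathfrak{m}$ to vanish; this I will deduce from the observation that $R\Gamma(\mathrm{Sh}_{UC^{\mathfrak{p}},\overline{K}},\mathbb{Z}_p)$ is a perfect complex of $\mathbb{Z}_p$-modules (by projectivity of $\mathrm{Sh}_{UC^{\mathfrak{p}}}$), its $\mathfrak{m}$-localization is a direct summand and hence again perfect as a $\mathbb{Z}_p$-complex, and the Condition concentrates it in a single cohomological degree, forcing $H^{n-1}(\mathrm{Sh}_{UC^{\mathfrak{p}}},\mathbb{Z}_p)_\mathfrak{m}$ to be a finitely generated projective, equivalently $\mathbb{Z}_p$-free, module.

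For (b), I pick a free resolution $\cdots \to F_1 \to F_0 \to \mathbb{T}(C^{\mathfrak{p}})_\mathfrak{m}/\mathfrak{m} \to 0$ over $\mathbb{T}(C^{\mathfrak{p}})_\mathfrak{m}$ with $F_p = \mathbb{T}(C^{\mathfrak{p}})_\mathfrak{m}^{s_p}$ and $s_0 = 1$. Tensoring with $\pi_{\mathfrak{m}}^\vee$ preserves exactness by flatness, producing a complex acyclic in positive degrees whose $H^0$ is $\pi_{\mathfrak{m}}^\vee/\mathfrak{m}\pi_{\mathfrak{m}}^\vee \cong \pi[\mathfrak{m}]^\vee$ (the identification used already in the proof of Lemma \ref{flatness}, combined with $\pi[\mathfrak{m}] = \pi_{\mathfrak{m}}[\mathfrak{m}]$). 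Pontryagin dualization gives an exact coresolution
$$0 \to \pi[\mathfrak{m}] \to \pi_{\mathfrak{m}} \to \pi_{\mathfrak{m}}^{s_1} \to \pi_{\mathfrak{m}}^{s_2} \to \cdots,$$
whose first boundary is multiplication by a minimal generating set of $\mathfrak{m}$. Applying Scholze's exact functor $\mathcal{F}$ and truncating in non-negative degrees, the complex of sheaves $K^\bullet = (\mathcal{F}_{\pi_{\mathfrak{m}}} \to \mathcal{F}_{\pi_{\mathfrak{m}}}^{s_1} \to \cdots)$ is quasi-isomorphic to $\mathcal{F}_{\pi[\mathfrak{m}]}$ placed in degree zero, and its hypercohomology spectral sequence reads
$$E_1^{p,q} = H^q(\mathbb{P}^{n-1}_{\mathbb{C}_p}, \mathcal{F}_{\pi_{\mathfrak{m}}})^{s_p} \Longrightarrow H^{p+q}(\mathbb{P}^{n-1}_{\mathbb{C}_p}, \mathcal{F}_{\pi[\mathfrak{m}]}).$$

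Step (a) gives $E_1^{p,q} = 0$ for $q \le n-2$, whence $H^i(\mathbb{P}^{n-1}_{\mathbb{C}_p}, \mathcal{F}_{\pi[\mathfrak{m}]}) = 0$ for $i \le n-2$. For $i = n-1$ only $E_\infty^{0,n-1}$ can contribute; the differentials $d_r\colon E_r^{0,n-1} \to E_r^{r,n-r}$ with $r \ge 2$ target groups where $n-r < n-1$, which already vanish on $E_1$, so $E_\infty^{0,n-1} = \ker(d_1) = H^{n-1}(\mathbb{P}^{n-1}_{\mathbb{C}_p}, \mathcal{F}_{\pi_{\mathfrak{m}}})[\mathfrak{m}]$. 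The edge map of the spectral sequence is induced by the inclusion $\pi[\mathfrak{m}] \hookrightarrow \pi_{\mathfrak{m}}$, i.e.\ is exactly $i_*$, producing an isomorphism $H^{n-1}(\mathbb{P}^{n-1}_{\mathbb{C}_p}, \mathcal{F}_{\pi_{\mathfrak{m}}[\mathfrak{m}]}) \xrightarrow{\sim} H^{n-1}(\mathbb{P}^{n-1}_{\mathbb{C}_p}, \mathcal{F}_{\pi_{\mathfrak{m}}})[\mathfrak{m}]$, in particular the claimed injectivity. The step I expect to require most care is the perfect-complex argument in (a)---verifying that the $\mathfrak{m}$-localization of $R\Gamma(\mathrm{Sh}_{UC^{\mathfrak{p}},\overline{K}},\mathbb{Z}_p)$ inherits perfectness as a $\mathbb{Z}_p$-complex through its realization as a direct summand, and extracting $\mathbb{Z}_p$-freeness of its single non-vanishing cohomology group---after which the remainder is a mechanical spectral-sequence unwinding.
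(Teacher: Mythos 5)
Your overall strategy---dualizing a free resolution of $\mathbb{T}(C^{\mathfrak{p}})_{\mathfrak{m}}/\mathfrak{m}$ against the flat module $\pi_{\mathfrak{m}}^\vee$ to produce a coresolution $0 \to \pi[\mathfrak{m}] \to \pi_{\mathfrak{m}} \to \pi_{\mathfrak{m}}^{s_1} \to \cdots$, applying the exact functor $\mathcal{F}_{(-)}$, and running the hypercohomology spectral sequence---is sound and genuinely different from the paper's argument, which instead filters $\pi_{\mathfrak{m}}$ by $\mathfrak{m}$-power torsion, identifies the graded pieces with direct sums of $\pi[\mathfrak{m}]$ via Lemma \ref{flatness}, and runs an induction through the long exact sequences \eqref{longes1} and \eqref{longes2}. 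Your route is cleaner and in fact yields more than the lemma asserts: the edge map identifies $H^{n-1}_{\text{ét}}(\mathbb{P}_{\mathbb{C}_p}^{n-1}, \mathcal{F}_{\pi[\mathfrak{m}]})$ with all of $H^{n-1}_{\text{ét}}(\mathbb{P}_{\mathbb{C}_p}^{n-1}, \mathcal{F}_{\pi_{\mathfrak{m}}})[\mathfrak{m}]$, not merely with a subobject. Two points you should still make explicit: convergence of the spectral sequence for the unbounded-above coresolution (harmless here, since each term has cohomological amplitude in $[0,2(n-1)]$, so the induced filtration on each $\mathbb{H}^i$ is finite), and the identification $\ker(d_1)=H^{n-1}(\mathcal{F}_{\pi_{\mathfrak{m}}})[\mathfrak{m}]$, which only requires that the entries of $F_1 \to F_0$ generate $\mathfrak{m}$.

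There is, however, a genuine error in step (a), at exactly the point you flag as delicate. A perfect complex of $\mathbb{Z}_p$-modules whose cohomology is concentrated in a single degree need \emph{not} have free cohomology there: $[\mathbb{Z}_p \xrightarrow{\,p\,} \mathbb{Z}_p]$ is perfect, has cohomology concentrated in one degree, and that cohomology is $\mathbb{Z}/p$. Since $\mathbb{Z}_p$ is regular, every finitely generated module has finite projective dimension, so perfectness of the summand $R\Gamma(\mathrm{Sh}_{UC^{\mathfrak{p}},\overline{K}},\mathbb{Z}_p)_{\mathfrak{m}}$ gives no control whatsoever on the torsion of $H^{n-1}(\mathrm{Sh}_{UC^{\mathfrak{p}}},\mathbb{Z}_p)_{\mathfrak{m}}$. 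Consequently the key vanishing $H^{n-2}_{\text{ét}}(\mathbb{P}_{\mathbb{C}_p}^{n-1},\mathcal{F}_{\pi_{\mathfrak{m}}})=0$, which by your universal-coefficients computation amounts to $\varinjlim_U H^{n-1}(\mathrm{Sh}_{UC^{\mathfrak{p}}},\mathbb{Z}_p)_{\mathfrak{m}}[p^\infty]=0$, does not follow from the Condition as stated with $\mathbb{Z}_p$-coefficients. What is actually needed is the Condition with torsion coefficients, i.e. $H^i(\mathrm{Sh}_{UC^{\mathfrak{p}}},\mathbb{Z}/p^k)_{\mathfrak{m}}=0$ for $i\neq n-1$, which is the form proved in the references \cite{Boy17}, \cite{CS15} and which immediately gives both the concentration and the torsion-freeness of $H^{n-1}(\cdot,\mathbb{Z}_p)_{\mathfrak{m}}$. (The paper's own proof needs the identical input at the identical spot---its induction step $k=n-3$ uses $H^{n-2}_{\text{ét}}(\mathbb{P}_{\mathbb{C}_p}^{n-1},\mathcal{F}_{\pi_{\mathfrak{m}}})=0$---and does not comment on it; but your proposed justification of that input is the one step of your argument that fails as written and must be replaced by the torsion-coefficient form of the Condition.)
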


 \begin{proof}
We prove that the kernel of the map $i_*$ in the long exact sequence \eqref{longes1}, which is equal to the image of the map $$ H^{n-2}_{\text{ét}}(\mathbb{P}_{\mathbb{C}_p}^{n-1}, \mathcal{F}_{C}) \to H^{n-1}_{\text{ét}}(\mathbb{P}_{\mathbb{C}_p}^{n-1}, \mathcal{F}_{\pi_{\mathfrak{m}}[\mathfrak{m}]}),$$ is trivial by showing directly that $ H^{n-2}_{\text{ét}}(\mathbb{P}_{\mathbb{C}_p}^{n-1}, \mathcal{F}_{C})=0$. This will be a consequence of the hypothesis that $H^{i}(\mathrm{Sh}_{UC^\mathfrak{p},\mathbb{C}},\mathbb{Z}_p)_\mathfrak{m}$ is concentrated in middle degree. Indeed, in degree $0$ this follows from the injection
$$ H^{0}_{\text{ét}}(\mathbb{P}_{\mathbb{C}_p}^{n-1}, \mathcal{F}_{C}) \to \bigoplus_{i=1}^{r} H^{0}_{\text{ét}}(\mathbb{P}_{\mathbb{C}_p}^{n-1}, \mathcal{F}_{\pi_{\mathfrak{m}}}) = \bigoplus_{i=1}^{r}H^0(\mathrm{Sh}_{C^{\mathfrak{p}}, \mathbb{C}_p}, \mathbb{Q}_p/\mathbb{Z}_p)_\mathfrak{m} $$
induced by the exact sequence \eqref{shortes2}, and from the vanishing of $H^0(\mathrm{Sh}_{C^{\mathfrak{p}}, \mathbb{C}_p}, \mathbb{Q}_p/\mathbb{Z}_p)_\mathfrak{m}$ by our assumption on $\mathfrak{m}$.
Then we proceed by induction, so assume now that $H^{k}_{\text{ét}}(\mathbb{P}_{\mathbb{C}_p}^{n-1}, \mathcal{F}_{C})=0$ for some $k$ with $0 \leq k \leq n-3$ and we want to prove the vanishing in degree $k+1$. Note that from the sequence \eqref{shortes1} we have immediately $H^{k+1}_{\text{ét}}(\mathbb{P}_{\mathbb{C}_p}^{n-1}, \mathcal{F}_{\pi[\mathfrak{m}]})=0$.
Further, as we can write $\pi_\mathfrak{m}/\pi_\mathfrak{m}[\mathfrak{m}]=\displaystyle{\varinjlim_{N}} \, \pi_\mathfrak{m}[\mathfrak{m}^N]/\pi_\mathfrak{m}[\mathfrak{m}]$ by Lemma \ref{powertorsion}, it suffices to prove $H^{k+1}_{\text{ét}}(\mathbb{P}_{\mathbb{C}_p}^{n-1}, \mathcal{F}_{C'_N})=0$ for all $N\geq 1$ where $C'_N :=\pi_\mathfrak{m}[\mathfrak{m}^N]/\pi_\mathfrak{m}[\mathfrak{m}]$. But this follows directly by the induction process in the proof of Theorem \ref{torsionthm1}, which shows in particular that $C'_N$ is a successive extension of copies of $\pi[\mathfrak{m}]$ (by Lemma \ref{flatness}).

 \end{proof}

\begin{theorem}
Assume that $\pi_{\mathfrak{m}}^\vee$ is flat over $\mathbb{T}(C^{\mathfrak{p}})_{\mathfrak{m}}$. Then $H^{n-1}_{\text{ét}}(\mathbb{P}_{\mathbb{C}_p}^{n-1}, \mathcal{F}_{ \pi[\mathfrak{m}]})$ is a non-zero admissible $\mathrm{Gal}_{F_{\mathfrak{p}}} \times B_\mathfrak{q}^\times $-representation, and any of its indecomposable  $\mathrm{Gal}_{F_{\mathfrak{p}}}$-subrepresentations is a subrepresentation of $\bar{\sigma}|_{\mathrm{Gal}_{F_{\mathfrak{p}}}}$. In particular, if $\bar{\sigma}|_{\mathrm{Gal}_{F_{\mathfrak{p}}}}$ is irreducible, then every indecomposable subrepresentation of $H^{n-1}_{\text{ét}}(\mathbb{P}_{\mathbb{C}_p}^{n-1}, \mathcal{F}_{ \pi[\mathfrak{m}]})$ is isomorphic to $\bar{\sigma}|_{\mathrm{Gal}_{F_{\mathfrak{p}}}}$. 
\end{theorem}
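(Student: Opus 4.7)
The plan is to combine the just-proved local-global compatibility corollary with Lemma~\ref{injective} and the filtration bootstrap from the proof of Theorem~\ref{torsionthm1}.

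First I will check admissibility, which is immediate from Scholze's theorem applied to the admissible $\mathrm{GL}_n(F_\mathfrak{p})$-representation $\pi[\mathfrak{m}]$. For the structural content, I will use the identification $\pi[\mathfrak{m}] = \pi_\mathfrak{m}[\mathfrak{m}]$ together with Lemma~\ref{injective} to produce a $\mathrm{Gal}_{F_\mathfrak{p}}\times B_\mathfrak{q}^\times$-equivariant injection
\[
H^{n-1}_{\text{\'et}}(\mathbb{P}_{\mathbb{C}_p}^{n-1}, \mathcal{F}_{\pi[\mathfrak{m}]}) \hookrightarrow H^{n-1}_{\text{\'et}}(\mathbb{P}_{\mathbb{C}_p}^{n-1}, \mathcal{F}_{\pi_\mathfrak{m}})[\mathfrak{m}].
\]
By the local-global compatibility corollary the target is $(\sigma|_{\mathrm{Gal}_{F_\mathfrak{p}}} \otimes_{T} \rho[\sigma])[\mathfrak{m}]$ where $T := \mathbb{T}(C^\mathfrak{p})_\mathfrak{m}$; since $\sigma|_{\mathrm{Gal}_{F_\mathfrak{p}}}$ is free of rank $n$ as a $T$-module (the Galois action being encoded in the matrix entries), taking $\mathfrak{m}$-torsion commutes past the free factor and the target simplifies to $\bar{\sigma}|_{\mathrm{Gal}_{F_\mathfrak{p}}} \otimes_{\mathbb{F}_q} \rho[\sigma][\mathfrak{m}]$ with trivial Galois action on the second factor.

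For non-vanishing I will reuse the filtration bootstrap in the proof of Theorem~\ref{torsionthm1}: Lemma~\ref{powertorsion} provides some $N$ for which $\bar{\sigma}|_{\mathrm{Gal}_{F_\mathfrak{p}}}$ already appears in $H^{n-1}_{\text{\'et}}(\mathbb{P}_{\mathbb{C}_p}^{n-1}, \mathcal{F}_{\pi_\mathfrak{m}[\mathfrak{m}^N]})[\mathfrak{m}]$, and climbing the filtration $0 \subset \pi_\mathfrak{m}[\mathfrak{m}] \subset \pi_\mathfrak{m}[\mathfrak{m}^2] \subset \cdots$ whose graded pieces are direct sums of copies of $\pi[\mathfrak{m}]$ by Lemma~\ref{flatness}, the arrow-chasing of Theorem~\ref{torsionthm1} forces $\bar{\sigma}|_{\mathrm{Gal}_{F_\mathfrak{p}}}$ to occur as a Jordan--H\"older constituent of $H^{n-1}_{\text{\'et}}(\mathbb{P}_{\mathbb{C}_p}^{n-1}, \mathcal{F}_{\pi[\mathfrak{m}]})$, so the latter is non-zero.

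The final and hardest step is the structural claim: every indecomposable Galois subrepresentation $M$ of $V \otimes_{\mathbb{F}_q} W$ (with $V = \bar{\sigma}|_{\mathrm{Gal}_{F_\mathfrak{p}}}$ and $W = \rho[\sigma][\mathfrak{m}]$ carrying trivial Galois action) embeds into $V$. When $V$ is irreducible this is transparent because $V \otimes W \cong V^{\dim W}$ is semisimple of isotype $V$, so any indecomposable sub is simple and isomorphic to $V$, which settles the ``in particular'' clause. In general, after choosing a basis $w_1,\ldots,w_k$ of $W$ and the corresponding projections $p_i : V \otimes W \to V$, the maps $\phi_i := p_i|_M \in \mathrm{Hom}_{\mathrm{Gal}}(M, V)$ satisfy $\bigcap_i \ker \phi_i = 0$, and the task reduces to producing an $\mathbb{F}_q$-linear combination $\sum \lambda_i \phi_i$ that remains injective. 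When the socle of $M$ is simple this is a finite-hyperplane avoidance argument in $\mathbb{F}_q^k$; the main obstacle is the case of non-simple socle, where I plan to exploit the locality of $\mathrm{End}_{\mathrm{Gal}}(M)$ coming from the indecomposability of $M$, together with the fact that $V \otimes W$ is a direct sum of copies of a single indecomposable $V$, to produce an injective $\phi$ via a Krull--Schmidt type argument.
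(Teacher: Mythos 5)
Your skeleton is exactly the paper's: the paper's entire proof of this theorem is the single sentence that it is a ``direct consequence of Lemma \ref{injective} and the proof of Theorem \ref{torsionthm1}'', and your treatment of admissibility, of the embedding $H^{n-1}_{\text{\'et}}(\mathbb{P}^{n-1}_{\mathbb{C}_p},\mathcal{F}_{\pi[\mathfrak{m}]})\hookrightarrow \bar{\sigma}|_{\mathrm{Gal}_{F_\mathfrak{p}}}\otimes_{\mathbb{F}_q}\rho[\sigma][\mathfrak{m}]$ (your identification of the $\mathfrak{m}$-torsion of the $\sigma$-typic module is correct, since $\sigma$ is free over $\mathbb{T}(C^\mathfrak{p})_\mathfrak{m}$), of non-vanishing via the filtration bootstrap, and of the ``in particular'' clause when $\bar{\sigma}|_{\mathrm{Gal}_{F_\mathfrak{p}}}$ is irreducible are all correct and considerably more explicit than the source.

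The gap is exactly where you flag it, and it is genuine: the statement you would need in the non-simple-socle case --- that an indecomposable submodule $M$ of $V^{\oplus k}$ always embeds into $V$ --- is false for general modules, so no Krull--Schmidt-type argument alone can close it. For instance, let $G$ surject onto $(\mathbb{Z}/2)^2$, let $V=\mathbb{F}_2[(\mathbb{Z}/2)^2]\cong \mathbb{F}_2[x,y]/(x^2,y^2)$ be the regular representation (with $x=g_1-1$, $y=g_2-1$), and let $M=V/\mathrm{soc}(V)=V/(xy)$. Then $M$ is $3$-dimensional and indecomposable (cyclic over a local algebra) with $2$-dimensional socle; the map $\bar{a}\mapsto(ax,ay)$ embeds $M$ into $V^{\oplus 2}$, yet every homomorphism $M\to V$ has image of dimension at most $2$ because the generator must land in $\mathrm{rad}(V)$, so $M$ does not embed into $V$. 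Note also that the simple-socle case needs no hyperplane avoidance: some projection $p_i$ is nonzero on the simple socle, hence injective on it, hence injective on all of $M$ since every nonzero submodule of a finite-dimensional $M$ meets the socle. So the honest content of the structural claim is the simple-socle (in particular the irreducible) case; for indecomposable subrepresentations with non-simple socle one needs additional input about $\bar{\sigma}|_{\mathrm{Gal}_{F_\mathfrak{p}}}$ or about the actual image of $H^{n-1}_{\text{\'et}}(\mathbb{P}^{n-1}_{\mathbb{C}_p},\mathcal{F}_{\pi[\mathfrak{m}]})$ inside $\bar{\sigma}|_{\mathrm{Gal}_{F_\mathfrak{p}}}\otimes W$, and the paper supplies none --- its proof is no more detailed than yours at precisely this point.
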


\begin{proof}
These are direct consequences of Lemma \ref{injective} and the proof of Theorem \ref{torsionthm1}.
\end{proof}

\begin{remark}\label{rmkflatness}
In the $n=2$ case of \cite{Sch18}, no flatness condition on $\pi_{\mathfrak{m}}^\vee$ is needed, whereas it plays an essential part in obtaining our results. This difference can be explained as follows. When $n=2$, the comparison map
$$H^{n-1}_{\text{ét}}(\mathbb{P}_{\mathbb{C}_p}^{n-1}, \mathcal{F}_{\pi_{\mathfrak{m}}[\mathfrak{m}]}) \overset{i_*}{\to} H^{n-1}_{\text{ét}}(\mathbb{P}_{\mathbb{C}_p}^{n-1}, \mathcal{F}_{\pi_{\mathfrak{m}}})[\mathfrak{m}]$$
is shown to have finite dimensional cokernel, which (together with the triviality of the kernel of $i_*$) suffices to give a complete classification result on possible Galois module structures of $H^{n-1}_{\text{ét}}(\mathbb{P}_{\mathbb{C}_p}^{n-1}, \mathcal{F}_{ \pi[\mathfrak{m}]})$ in terms of $\bar{\sigma}|_{\mathrm{Gal}_{F_{\mathfrak{p}}}}$, cf. Theorem 7.8 and its proof in \cite{Sch18}. The finite dimensionality of $\mathrm{coker}(i_*)$ is in turn a consequence of Strauch's description of the action of $\mathrm{GL}_n(F_\mathfrak{p})\times W_{F_\mathfrak{p}} \times B^\times_\mathfrak{q}$ on the geometric connected components $\pi_0(\mathcal{M}_{\mathrm{LT},\infty,\mathbb{C}_p}) \cong F_\mathfrak{p}^\times$; see Proposition 4.7 and its proof in \cite{Sch18} (in fact we only need the result that $B^\times_\mathfrak{q}$ acts via the reduced norm map to $F_\mathfrak{p}^\times$). 

When $n > 2$, the cokernel of $i_*$ is given in terms of higher cohomology groups rather than $H^0$, as exhibited in the sequences (\ref{longes1}) and (\ref{longes2}), for which we have currently no similar descriptions of the group action of $B^\times_\mathfrak{q}$; furthermore, the subgroup $(\mathcal{O}^\times_{B_\mathfrak{q}})_1 \subseteq \mathcal{O}^\times_{B_\mathfrak{q}}$ of elements of reduced norm 1 may act non-trivially on the cokernel in these cases.  Instead our flatness assumption enables us to relate the graded pieces in the filtration $\ldots \subseteq \pi_\mathfrak{m}[\mathfrak{m}^r] \subseteq \pi_\mathfrak{m}[\mathfrak{m}^{r+1}] \subseteq \ldots$ of $\pi_\mathfrak{m}$ and $\pi[\mathfrak{m}]$, and then obtain partial information on $H^{n-1}_{\text{ét}}(\mathbb{P}_{\mathbb{C}_p}^{n-1}, \mathcal{F}_{ \pi[\mathfrak{m}]})$ in certain cases. However it is expected that such flatness condition is more of technical nature rather than conceptual.

\end{remark}

\begin{remark}
In \cite{Sch18}, Scholze also proved a compatibility result of his functor with the patching construction of \cite{CEG$^+$18}, again for $n=2$. It is expected such compatibility result holds for $n>2$ too. To prove it one may adapt Scholze's argument to the patching context for compact unitary groups as in \cite{BLGG11}, noticing that the main result in Section 8 of \cite{Sch18} works in arbitrary dimensions to control the behaviour of the functor $H^{n-1}_{\text{ét}}(\mathbb{P}_{\mathbb{C}_p}^{n-1}, \mathcal{F}_\bullet) $ with respect to taking limits.
\end{remark}

\medskip

\end{section}

\medskip


\begin{thebibliography}{99}

\bibitem{BLGG11}

T. Barnet-Lamb, T. Gee and D. Geraghty, \textit{The Sato-Tate conjecture for Hilbert modular forms}, Jour. Amer. Math. Soc. 24 (2011), 411-469. MR2748398


\bibitem{BW80}
Borel, A. and Wallach, N., \textit{Continuous Cohomology, Discrete Subgroups, and Representations of Reductive Groups}. Ann. of Math. Studies 94. Princeton University Press, 1980

\bibitem{BZ99}
J.-F. Boutot and T. Zink, \textit{The $p$-adic uniformization of Shimura curves}, 2000, preprint available at
https://www.math.uni-bielefeld.de/~zink/BoutotRevision.pdf

\bibitem{Boy17}
Pascal Boyer, \textit{Sur la torsion dans la cohomologie des variétés de Shimura de Kottwitz-Harris-Taylor}, Journal de l'IMJ Volume 18 , Issue 3 , May 2019 , pp. 499 - 517

\bibitem{Bre10}
C. Breuil, \textit{The emerging p-adic Langlands programme}, In Proceedings of the International Congress
of Mathematicians. Volume II, pages 203 - 230. Hindustan Book Agency, New Delhi, 2010.

\bibitem{BK96}
C. Bushnell and P. C. Kutzko, \textit{The  admissible  dual  of  GL(N) via  compact  open  subgroups}, Ann. of Math. Studies, vol. 129, Princeton University Press, Princeton, 1996.



\bibitem{CEG$^+$18}
Caraiani, A., Emerton, M., Gee, T., Geraghty, D., Paskunas, V., and Shin, S., \textit{Patching and the $p$-adic local Langlands correspondence}, Cambridge Journal of Math. 4 (2016), no. 2, 197-287.


\bibitem{CS15}
Ana Caraiani, Peter Scholze,
\textit{On the generic part of the cohomology of compact unitary Shimura varieties}, Annals of Mathematics (2) 186 (2017), no. 3, 649--766.



\bibitem{Che11}
G. Chenevier, \textit{The p-adic analytic space of pseudocharacters of a profinite group, and pseudorepresentations over arbitrary rings}, Proceedings of the LMS Durham Symposium 2011.



\bibitem{Clo91}
L. Clozel, \textit{Représentations galoisiennes associées aux représentations automorphes       autoduales de $GL(n)$}, Inst. Hautes Etudes Sci.    Publ. Math. No. 73 (1991) 77-145.

\bibitem{Con10}
William Conley, \textit{Inertial types and automorphic representations with
prescribed ramification}, PhD thesis, 2010.

\bibitem{FN19}
K. Fayad and J. Nekovar, \textit{Semisimplicity of certain Galois representations occurring in etale cohomology of unitary Shimura varieties}, Amer. J. of Math. 141, 2019, 503 -- 530.

\bibitem{FS20}
Jessica Fintzen and Sug Woo Shin,\textit{ Congruences of algebraic automorphic forms and supercuspidal representations}, appendices by Vytautas Paskunas [C] and Rapha\"el Beuzart-Plessis [D], Camb. J. Math. 9 (2021), no.2, 351-429

\bibitem{GN16}
T. Gee and J. Newton, \textit{Patching and the completed homology of locally symmetric spaces}, to appear in J. Inst. Math. Jussieu.

\bibitem{GH94}
B. H. Gross and M. J. Hopkins,\textit{ Equivariant vector bundles on the Lubin-Tate moduli space}, in Topology and representation theory (Evanston, IL, 1992), volume 158 of Contemp. Math., pages 23–88. Amer. Math. Soc., Providence, RI, 1994.

\bibitem{SGA4}
A. Grothendieck, M. Artin and J.-L. Verdier, \textit{S\'eminaire de G\'eom\'etrie Alg\'ebrique du Bois-
Marie, 1963–1964 – Th\'eorie des topos et cohomologie \'etale des sch\'emas (SGA 4)}, Lecture
Notes in Mathematics, vols. 269, 270, 305 (Springer, 1972-1973).

\bibitem{HT01}
Michael Harris and Richard Taylor, \textit{The geometry and cohomology of some simple Shimura varieties}, volume 151 of
Annals of Mathematics Studies. Princeton University Press, Princeton, NJ, 2001. With an appendix by Vladimir G.
Berkovich.

\bibitem{Hub96}
Roland Huber, \textit{\'Etale cohomology of rigid analytic varieties and adic spaces}, Aspects of
Mathematics, Friedr. Vieweg and Sohn, Braunschweig, 1996


\bibitem{Kot92}
Kottwitz, R.E., \textit{On the $\lambda$-adic representations associated to some simple Shimura varieties}, Invent Math 108, 653-665 (1992). https://doi.org/10.1007/BF02100620

\bibitem{LS19}
Jean-Pierre Labesse and Joachim Schwermer, \textit{Central morphisms and cuspidal automorphic representations}, J. Number Theory 205 (2019), 170-193. 

\bibitem{LLHMPQ21}
Daniel Le, Bao Viet Le Hung, Stefano Morra, Chol Park and Zicheng Qian, \textit{Moduli of Fontaine-Laffaille representations and a mod-p local-global compatibility result}, preprint, 2021

\bibitem{Lee22}
Si Ying Lee, \textit{Semisimplicity of étale cohomology of certain Shimura varieties}, 2022, preprint, available at arXiv:2206.07283 [math.NT]


\bibitem{LQ21}
Kegang Liu and Zicheng Qian, \textit{A note on mod-$p$ local-global compatibility via Scholze's functor}, 2021, preprint, available at arXiv:2111.11137 [math.NT]

\bibitem{Moo19}
Ben Moonen, \textit{A remark on the Tate conjecture}, Journal of Algebraic Geometry 28 (2019), no. 3, 599-603. 

\bibitem{RZ96}
M. Rapoport and Th. Zink, \textit{Period Spaces for $p$-Divisible Groups} (AM-141). Princeton University Press, 1996.

\bibitem{Sch18}
Peter Scholze, \textit{On the $p$-adic cohomology of the Lubin-Tate tower}, with an Appendix by Michael Rapoport, Annales de l'ENS., 2018.

\bibitem{SW13}
P. Scholze and J. Weinstein, \textit{Moduli of $p$-divisible groups}, Cambridge Journal of Mathematics, vol.1, issue.2, pp.145-237, 2013.

\bibitem{Sta}
Stacks Project, The Stacks project authors, https://stacks.math.columbia.edu, 2022

\bibitem{Shin11}
Sug Woo Shin, \textit{Galois representations arising from some compact Shimura varieties}, Annals of Mathematics 173 (2011), 1645-1741.

\bibitem{Var98I}
Varshavsky, Yakov. \textit{$p$-adic uniformization of unitary Shimura varieties}. Publications Mathématiques de l'IHÉS, Volume 87 (1998) , pp. 57-119. 

\bibitem{Var98}
Varshavsky, Yakov. \textit{$p$-adic uniformization of unitary Shimura varieties. II}. J. Differential Geom. 49 (1998), no. 1, 75--113. 

\bibitem{Wed00}
T. Wedhorn, \textit{Congruence relations on some Shimura varieties}, J. Reine Angew. Math.,
524 (2000), pp. 43-71.

\end{thebibliography}
\end{document}